\documentclass[a4paper,10pt]{amsart}
\usepackage[utf8]{inputenc}
 \usepackage{setspace}
\usepackage[cmtip,arrow]{xy}
\usepackage{lscape}
\usepackage{latexsym}
\usepackage[activeacute,english]{babel}
\usepackage{graphicx}
\usepackage{amsmath}
\usepackage{amsfonts}

\usepackage{pst-node}

\usepackage{amsthm}
\usepackage{amssymb}
\usepackage{amscd}
\usepackage{mathrsfs}
\usepackage{pb-diagram,pb-xy}
\xyoption{all}
\usepackage{graphics}
\usepackage{stmaryrd}
\usepackage{yfonts,bbm}
\usepackage{tikz-cd}



\newtheorem{theorem}{Theorem}[section]
\newtheorem{lemma}[theorem]{Lemma}
\newtheorem{proposition}[theorem]{Proposition}
\newtheorem{corollary}[theorem]{Corollary}
\newtheorem{definition}[theorem]{Definition}
\newtheorem{example}[theorem]{Example}

\newtheorem{Remark}[theorem]{Remark}

\usepackage[square,sort,comma,numbers]{natbib}
\usepackage{hyperref}
\hypersetup{colorlinks=true,linkcolor=blue,citecolor=red,linktocpage=true}
\usepackage{url}
\usepackage{relsize}




\newcommand{\A}{\mathcal{A}}
\newcommand{\B}{\mathcal{B}}
\newcommand{\C}{\mathcal{C}}
\newcommand{\D}{\mathcal{D}}
\newcommand{\T}{\mathcal{T}}

\newcommand{\Z}{\mathbb{Z}}
\newcommand{\Ho}{\mathrm{Hom}}


\begin{document}
\title{A Recollement of differential graded categories}

\author[M. L. S. Sandoval-Miranda, V. Santiago, E. O. Velasco]{Martha Lizbeth Shaid Sandoval-Miranda,\\ Valente Santiago-Vargas,\\ Edgar Omar Velasco-P\'aez}

\thanks{The authors are grateful to the project PAPIIT-Universidad Nacional Aut\'onoma de M\'exico IN100520}
\subjclass{2000]{Primary 18A25, 18E05; Secondary 16D90,16G10}}
\keywords{Differential graded categories, Functor categories, Triangular Matrix category}
\dedicatory{}

\begin{abstract}
	In this paper, we  prove that given a differential graded category $\mathcal{C}$ and $\mathcal{B}$ a full differential graded subcategory  closed under coproducts, there is a canonical recollement 
	$$\xymatrix{ \mathrm{DgMod}(\mathcal{C}/ \mathcal{I}_{\mathcal{B}})  \ar@<0ex>[r]& \mathrm{DgMod}(\mathcal{C}) \ar@<0ex>[r] \ar@<-2ex>[l]  \ar@<2ex>[l]& \mathrm{DgMod}(\mathcal{B}) \ar@<-2ex>[l] \ar@<2ex>[l]}$$
	of differential graded categories, for which we use enriched categories tools. 
	We continue  the study of differential graded  triangular matrix categories $\Lambda=\left[ \begin{smallmatrix}
		\mathcal{T} & 0 \\ 
		M & \mathcal{U}
	\end{smallmatrix}\right]$ as initiated in \cite{PaezSandovalSan}. We show that given a recollement between functor dg-categories we can induce a new recollement between differential graded triangular matrix  categories, this is a generalization of a result given by Chen and Zheng in \cite[Theorem 4.4]{Chen}.
\end{abstract}
\maketitle

\section{Introduction}\label{sec:1}

The notion of recollement of triangulated categories was introduced first by Beilinson, Bernstein, and Deligne in \cite{Beilinson1}. In the context of abelian categories, recollements were studied by Franjou and Pirashvili in \cite{Franjou}, motivated by the work of MacPherson-Vilonen in derived category of perverse sheaves (see \cite{Macpherson}).  Recollements of triangulated (abelian) categories can be viewed as ''exact sequences'' of triangulated (abelian) categories, which describe the middle term by a subcategory and a quotient category. It should be noted that these two recollement situations are now widely used in the study of representation theory and algebraic geometry.\\
On the other hand, rings of the form $\left[ \begin{smallmatrix}
T & 0 \\ 
M & U
\end{smallmatrix}\right]$ where $T$ and $U$ are rings and $M$ is a $T,U$-bimodule have appeared often in the study of the representation theory of artin rings and algebras (see for example \cite{Auslander1}, \cite{Green2}). Such a rings appear naturally in the study of homomorphic images of hereditary artin algebras.\\

The differential graded categories (dg-categories) and their dg-modules have played a fundamental role in mathematics for a long time. In the last century,  dg-categories were the primary tool to study the matrix problems related to the representation theory of algebras. In addition, the development of their study has led to Drozd's proof of the tame-wild dichotomy (see, for example: \cite{Yuriy2}, \cite{Mark}). \\

In recent decades, the relevance of dg-categories comes from a fundamental result due to B. Keller,  which asserts that each compactly generated algebraic triangulated  category is equivalent to the derived category of certain small dg-category, see \cite[Theorem 4.3]{Keller}.\\

We now give a brief description of the contents on this paper.

In Section 2,  we present some basic notations, mainly taken from \cite{Keller} and \cite{Keller2} (see also \cite{Yeku1} for a textbook), which will be used all throughout these paper.\\
In Section 3, we consider $F:\mathcal{A}\longrightarrow \mathcal{C}$ a dg-functor and the induced funtor $F_{\ast}:\mathrm{DgMod}(\mathcal{C})\longrightarrow \mathrm{DgMod}(\mathcal{A})$. The goal of all this section is to show that $F_{\ast}$ has a left and a right adjoint (see Proposition \ref{dgadjuninduced}). The left and right adjoints of $F_{\ast}$ are computed as left and right Kan extensions respectively. So, in order to see that  first we recall the notions of ends and coends in the Subsection \ref{endcoend}, then we study weighted limits and colimits in the dg-context in the Subsection \ref{weightedlimi},  in the Subsection \ref{Kanexten} we will give the notion of Kan extensions, and finally in the Subsection \ref{PropertyKan}, we recall the universal property that has the left and right Kan extensions  which help us to see that $F_{\ast}$ has left and right adjoints.\\
In Section 4, we consider $\mathcal{C}$ be a dg-category with finite coproducts  and $\mathcal{B}$ a full differential graded subcategory  closed under coproducts, we prove that  
there is a  dg ideal $\mathcal{I}_{\mathcal{B}}$ and that there is a recollement
$$\xymatrix{ \mathrm{DgMod}(\mathcal{C}/ \mathcal{I}_{\mathcal{B}})  \ar@<0ex>[r]& \mathrm{DgMod}(\mathcal{C}) \ar@<0ex>[r] \ar@<-2ex>[l]  \ar@<2ex>[l]& \mathrm{DgMod}(\mathcal{B}) \ar@<-2ex>[l] \ar@<2ex>[l]}$$ (see Theorem \ref{dg-recollement}).\\
In Section 5, we consider  two  dg categories $\mathcal{U}$, $\mathcal{T}$ and $M \in \mathrm{DgMod}(\mathcal{U}\otimes \mathcal{T}^{op})$ and we consider the  differential graded matrix category $ \Lambda = \left[ \begin{smallmatrix}
\mathcal{T} & 0 \\ 
M & \mathcal{U}
\end{smallmatrix}\right]$ defined in \cite{PaezSandovalSan}. The main result of this Section is a generalization of the result in \cite[Theorem 4.4]{Chen} and  \cite[Theorem 3.10]{LGOS2}, that is,  given a recollement between dg-functor categories we can induce a recollement between dg-modules over certain triangular matrix dg-categories $\mathrm{DgMod}\big( \left[ \begin{smallmatrix}
\mathcal{T} & 0 \\ 
M & \mathcal{U}
\end{smallmatrix}\right]\big)$ (see Theorem \ref{Recoll1}).

\section{Preliminaries}
In this section we present some basic notations of dg-categories, mainly taken from \cite{Keller} and \cite{Keller2} (see also \cite{Yeku1} for a textbook), which will be used all throughout these paper. Throughout this section $K$ will be an arbitrary commutative ring. Recall that a differential graded (dg) $K$-module is a graded $K$-module $M= \bigoplus_{n \in \Z} M^{n}$, together with a graded $K$-linear morphism $d_{M}: M\longrightarrow M$ of degree $+1$ such that $d_{M}\circ d_{M} = 0$. The category which will be  denoted by $\mathrm{DgMod}(K)$ has as objects the dg $K$-modules; and
given $M$ and $N$ two dg $K$-modules the set of morphisms from $M$ to $N$ in the category $\mathrm{DgMod}(K)$ is by definition the following graded $K$-module: 
$$\Ho_{\mathrm{DgMod}(K)}(M, N ):= \bigoplus_{n \in \Z} \Ho_{\mathrm{DgMod}(K)}^{n}(M, N ),$$
where $ \Ho_{\mathrm{DgMod}(K)}^{n}(M, N )$ consists of the graded $K$-linear morphisms $f:M \longrightarrow N$ of degree $n$, i.e., such that $f(M^{i})\subseteq N^{i+n}$ for all $i \in \Z$.
Moreover, each space of morphisms $\Ho_{\mathrm{DgMod}(K)}(M, N) $ has a structure of differential graded  $K$-module given by the  differential $d: \Ho_{\mathrm{DgMod}(K)}(M, N) \longrightarrow \Ho_{\mathrm{DgMod}(K)}(M, N)$, which is a graded $K$-linear morphism of degree $1$ such that $d \circ d = 0$ and is defined by the rule:
\begin{equation}\label{Homdgestructure}
d(\alpha) = d_{N} \circ \alpha -(-1)^{|\alpha|} \alpha \circ d_{M}
\end{equation}
where $|?|$ denotes the degree, whenever $\alpha$ is a homogeneous element
of the set $\Ho_{\mathrm{DgMod}(K)}(M, N)$.\\
Now,  if $M$ and $N$ are dg $K$-modules, the tensor product $M \otimes N := M \otimes_{K} N$ also becomes an object of $\mathrm{DgMod}(K)$ where the grading is given by $(M \otimes N )^{n} = \bigoplus_{i+j=n} M^{i}\otimes_{K} N^{j}$ and the differential $d_{M \otimes N} : M \otimes N \longrightarrow M \otimes N$ by the rule
\begin{equation} \label{tendgestructure}
d_{M \otimes N} (m \otimes n) = d_{M}(m) \otimes n + (-1)^{|m|}m \otimes d_{N}(n),
\end{equation}
for all homogeneous elements $m \in M$ and $n \in N$. All throughout this paper, we use the unadorned symbol $\otimes$ to denote $\otimes_{K}$.\\

\begin{definition}
 A differential graded category or a $\textbf{dg-category}$ is a $K$-category $\C $ such that:
 \begin{itemize}
 \item[(a)] $\Ho_{\mathcal{C}}(X,Y)$ $\in \mathrm{DgMod}(K)$   for all $ X,Y \in \mathcal{C}$.
 \item[(b)] The composition function  
 $$\theta_{X,Y,Z}: \Ho_{\mathcal{C}}(Y,Z) \otimes_{K} \Ho_{\mathcal{C}}(X,Y) \longrightarrow \Ho_{\mathcal{C}}(X,Z)$$
is a morphism of degree zero in $\mathrm{DgMod}(K)$  which commutes with the differentials.
 \end{itemize}
 \end{definition}
 
\begin{example}
\begin{enumerate}
\item[(a)] A dg-category with only one object can be identified with a dg-algebra, i.e., a graded $K$-algebra with a differential $d$ such that it satisfies Leibniz's rule $d(fg) = d(f)g + (-1)^{|f|}fd(g)$
for f, g homogeneous elements. Conversely, each dg-algebra $B$ can be viewed as a dg-category with only one object.
\item[(b)] The category $\mathrm{DgMod}(K)$ is the main example of a dg-category.
\end{enumerate}
\end{example}

If $\A$ is a dg-category, then the $\textbf{opposite dg-category}$ $\A^{op}$ has the same class of objects as $\A$ and the differential on morphisms $d:\A^{op}(A, B)= \A(B, A)\longrightarrow \A(B, A) = \A^{op}(A, B)$ is the same as in $\A$, that is, $d_{\A^{op}(A, B)}(f^{op}):=(d_{\A(B,A)}(f))^{op}$. The composition of morphisms in $\A^{op}$ is given as $\beta^{op} \circ  \alpha^{op} = (-1)^{|\alpha||\beta|}(\alpha \circ \beta)^{op}$, where we use the superscript $op$ to emphasize that a morphism is viewed in $\A^{op}$.\\
Analogously to the tensor product of $K$-modules we recall the definition of the tensor product of two differential graded categories.
\begin{definition}
Let $\A,$ $\B$ be dg-categories. The $\textbf{tensor product of dg-categories}$, which we denote by $\A \otimes \B$ is defined as follows:
 \begin{itemize}
  \item[(a)] Obj($\mathcal{A} \otimes \B$):= $Obj(\A) \times Obj(\B)$.
  \item[(b)]$\Ho_{\A \otimes \B}((X,Y),(X',Y')):= \Ho_{\A}(X,X') \otimes \Ho_{\B}(Y, Y')$ for all  $(X,Y),$ $ (X',Y') \in Obj(\A \otimes \B)$.
\end{itemize}
The composition 
  $\circ:\Ho_{\A \otimes \B}((X',Y'),(X'',Y'')) \times \Ho_{\A \otimes \B}((X,Y),$ $(X',Y')) \longrightarrow \Ho_{\A \otimes \B}((X,Y),(X'',Y''))$ is given as:
  $$g \circ f= (\alpha_{2} \otimes \beta_{2}) \circ (\alpha_{1} \otimes \beta_{1})= (-1)^{|\beta_{2}| |\alpha_{1}|} (\alpha_{2}\alpha_{1}) \otimes (\beta_{2}\beta_{1}),$$ 
 for each $f=\alpha_{1} \otimes \beta_{1}$ and $g=\alpha_{2} \otimes \beta_{2}$ with
$\alpha_{1}\in \Ho_{\A}(X,X')$, $\alpha_{2}\in \Ho_{\A}(X',X'')$, $\beta_{1} \in \Ho_{\B}(Y,Y')$ and  $\beta_{2} \in \Ho_{\B} (Y',Y'')$ homogenous elements.
\end{definition}

\begin{definition}
Let  $\A$, $\B$ be two  dg-categories.  A  $\textbf{graded functor}$ $F: \A \longrightarrow \B$   is a  $K$-linear functor such that $F(\Ho_{\A}^{n} (A, A'))\subseteq \Ho_{\B}^{n} (F(A), F(A'))$  for  each  $n \in \mathbb{Z}$ and  $A, A' \in Obj(\A)$.
 \end{definition}
 
 \begin{definition}
 Let $\A$, $\B$ two dg-categories. A  $\textbf{differential graded functor}$ or $\textbf{dg-functor}$ is a  graded functor  $F: \A \longrightarrow \B$  which commutes with  the differential, i.e, $F(d_{\A}(\alpha))= d_{\B}(F(\alpha))$ for all homogeneous morphism $\alpha$ in $\A$.
 \end{definition}
 One of the most important examples of functor requires the following lemma.
  \begin{lemma} \label{AxB dg funtor} 
Let $\A$, $\B$  and $\C$ be  dg-categories and  let $F: \A \otimes \B \longrightarrow \C$ be  an assignment on objects $(A,B) \rightsquigarrow F(A,B)$ and an assignment  on homogeneous morphism  $\alpha \otimes \beta \rightsquigarrow F(\alpha \otimes \beta)$ such that $|F(\alpha \otimes \beta)|= |\alpha|+ |\beta|$.Then the following conditions are equivalent.
\begin{itemize}
\item[(a)]The given mapping defines a dg-functor $F: \mathcal{A} \otimes \mathcal{B} \longrightarrow \mathcal{C}$.
\item[(b)]The following conditions are satisfied:
\begin{itemize}
\item[(b1)]For each fixed object  $A \in \A$, the assignment $B \rightsquigarrow F(A,B)$ on objects and $\beta \rightsquigarrow F(1_{A} \otimes \beta)$ on morphisms, define a dg-functor $G: \mathcal{B} \longrightarrow \mathcal{C}$.
\item[(b2)] For each fixed object $B \in \B$, the assignment $A \rightsquigarrow F(A,B)$ on objects and  $\alpha \rightsquigarrow F(\alpha \otimes 1_{B})$ on morphisms define a dg-functor $H: \mathcal{A} \longrightarrow \mathcal{C}$.
\item[(b3)]For all homogeneous morphisms $\alpha: A \longrightarrow A'$ and  $\beta: B \longrightarrow B'$, in  $\A$ and $\B$, respectively, there is the equality
$$(-1)^{|\alpha| |\beta |} F(1_{A'} \otimes \beta) \circ F(\alpha \otimes 1_{B})= F(\alpha \otimes \beta)= F(\alpha \otimes 1_{B'})\circ F(1_{A} \otimes \beta).$$
\end{itemize}
\end{itemize}
\end{lemma}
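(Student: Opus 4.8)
The plan is to prove the two implications separately; the content of the lemma is that a dg-functor out of a tensor product of dg-categories is exactly a compatible family of ``partial'' dg-functors obeying the Koszul interchange law \textup{(b3)}.

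\textbf{(a) $\Rightarrow$ (b).} Assume $F:\A\otimes\B\to\C$ is a dg-functor. For \textup{(b1)} I fix $A\in\A$ and put $G(B):=F(A,B)$, $G(\beta):=F(1_A\otimes\beta)$. Since $|1_A|=0$ and $d(1_A)=0$, the composition rule in $\A\otimes\B$ gives $(1_A\otimes\beta_2)\circ(1_A\otimes\beta_1)=1_A\otimes(\beta_2\beta_1)$ and the tensor-product differential \eqref{tendgestructure} gives $d(1_A\otimes\beta)=1_A\otimes d\beta$; applying that $F$ is a dg-functor then shows that $G$ preserves composition and identities, has the right degree behaviour and commutes with $d$, i.e. $G$ is a dg-functor. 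Part \textup{(b2)} is the mirror argument, now using $|1_B|=0$ and $d(1_B)=0$. For \textup{(b3)} I verify in $\A\otimes\B$, directly from the composition rule, the two elementary factorizations
\[
\alpha\otimes\beta=(\alpha\otimes 1_{B'})\circ(1_A\otimes\beta),\qquad
\alpha\otimes\beta=(-1)^{|\alpha||\beta|}(1_{A'}\otimes\beta)\circ(\alpha\otimes 1_B),
\]
and then apply $F$ together with its functoriality.

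\textbf{(b) $\Rightarrow$ (a).} Write $G_A:\B\to\C$ for the dg-functor of \textup{(b1)} attached to $A$ and $H_B:\A\to\C$ for the one of \textup{(b2)} attached to $B$; note $G_A(B)=F(A,B)=H_B(A)$. I define $F$ on objects by the given assignment and on a homogeneous elementary tensor $\alpha\otimes\beta$, with $\alpha\colon A\to A'$ and $\beta\colon B\to B'$, by $F(\alpha\otimes\beta):=H_{B'}(\alpha)\circ G_A(\beta)$. Because $H_{B'}$ and $G_A$ are $K$-linear and composition in $\C$ is $K$-bilinear, the expression $H_{B'}(\alpha)\circ G_A(\beta)$ is $K$-bilinear in $(\alpha,\beta)$ and homogeneous of degree $|\alpha|+|\beta|$, hence it descends to a well-defined graded $K$-linear map on $\Ho_{\A\otimes\B}((A,B),(A',B'))=\Ho_\A(A,A')\otimes\Ho_\B(B,B')$; also $F(1_A\otimes 1_B)=H_B(1_A)\circ G_A(1_B)=1_{F(A,B)}$. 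It is worth recording that \textup{(b3)}, rewritten through $G$ and $H$, says precisely $H_{B'}(\alpha)\circ G_A(\beta)=(-1)^{|\alpha||\beta|}G_{A'}(\beta)\circ H_B(\alpha)$, so the definition of $F$ is independent of which of the two orders one uses.

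It remains to check that $F$ respects composition and commutes with $d$, and this is where \textup{(b3)} and the dg-axiom on composition in $\C$ come in. For composition I substitute the definition of $F$ into both sides of $F\big((\alpha_2\otimes\beta_2)\circ(\alpha_1\otimes\beta_1)\big)=F(\alpha_2\otimes\beta_2)\circ F(\alpha_1\otimes\beta_1)$, expand the left side with the sign rule $g\circ f=(-1)^{|\beta_2||\alpha_1|}(\alpha_2\alpha_1)\otimes(\beta_2\beta_1)$ and the functoriality of each $G_A$ and $H_B$ on both sides; after cancelling the common outer factors the identity collapses to exactly the interchange relation \textup{(b3)} applied to $\alpha_1$ and $\beta_2$, so it holds. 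For the differential I use the Leibniz rule $d(g\circ f)=d(g)\circ f+(-1)^{|g|}g\circ d(f)$ — which is the dg-category axiom on composition in $\C$ combined with \eqref{tendgestructure} — to expand $d\big(H_{B'}(\alpha)\circ G_A(\beta)\big)$, and compare with $F\big(d(\alpha\otimes\beta)\big)=F(d\alpha\otimes\beta)+(-1)^{|\alpha|}F(\alpha\otimes d\beta)$; since $G_A$ and $H_{B'}$ commute with $d$, the two expressions agree term by term. The only genuine work throughout is the sign bookkeeping, and the crux is verifying in the composition step that the Koszul sign appearing in \textup{(b3)} is precisely the one generated by the composition rule of $\A\otimes\B$.
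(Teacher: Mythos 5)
Your proof is correct. Note, however, that the paper does not actually prove this lemma: its ``proof'' is the single line ``See \cite[Lemma 1.1]{Saorin}'', so you are supplying the argument that the authors outsource. What you give is the expected one, and the sign bookkeeping checks out: in (a)$\Rightarrow$(b) the two factorizations of $\alpha\otimes\beta$ follow from the composition rule of $\A\otimes\B$ together with $|1_A|=|1_B|=0$ and $d(1_A)=d(1_B)=0$, and in (b)$\Rightarrow$(a) the composition axiom for $F$ reduces, after expanding both sides through $G_A$ and $H_B$, precisely to the interchange relation (b3) applied to $\alpha_1$ and $\beta_2$ (I verified that the Koszul sign $(-1)^{|\beta_2||\alpha_1|}$ produced by the composition in $\A\otimes\B$ matches the one in (b3)), while compatibility with $d$ follows from the Leibniz rule in $\C$ and the formula \eqref{tendgestructure} for the differential on $\Ho_\A(A,A')\otimes\Ho_\B(B,B')$. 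One small presentational point: in (b)$\Rightarrow$(a) the lemma already hands you an assignment $\alpha\otimes\beta\rightsquigarrow F(\alpha\otimes\beta)$, so rather than \emph{defining} $F(\alpha\otimes\beta):=H_{B'}(\alpha)\circ G_A(\beta)$ you should say that (b3) forces the given $F(\alpha\otimes\beta)$ to equal $F(\alpha\otimes 1_{B'})\circ F(1_A\otimes\beta)=H_{B'}(\alpha)\circ G_A(\beta)$, and then verify that this expression satisfies the dg-functor axioms; your remark on order-independence already contains this, so it is a matter of phrasing, not a gap.
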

\begin{proof}
See \cite[Lemma 1.1]{Saorin}.
\end{proof}

\begin{example}\label{Hom(A,-) dg funtor}
\begin{enumerate}
\item [(a)] 
Let $\A$ be an  dg-category and  $\mathrm{DgMod}(K)$  the category  of differential graded $K$-modules. Let $A \in \A$ be then $\Ho_{\A}(A,-): \A \longrightarrow \mathrm{DgMod}(K)$ is a dg-functor where $\Ho_{A}(A,f)\!:\!\! \Ho_{\A}(A,A') \rightarrow \Ho_{\A} (A,B')$ is given by  $\Ho_{\A}(A,f)(j)= f \circ j$ for $f$, $j$  homogeneous elements.

\item [(b)] Let $\A^{op}$ a dg-category and let $\mathrm{DgMod}(K)$ the category  of differential graded $K$-modules. If $A \in \A^{op}$, then $\Ho_{\A^{op}}(-,A): \A^{op} \longrightarrow \mathrm{DgMod}(K)$  is a dg-functor where  $\Ho_{\A^{op}}(f^{op},A): \Ho_{\A^{op}}(B',A) \longrightarrow \Ho_{\A^{op}} (A',A)$ is given by $\Ho_{\mathcal{A}^{op}}(f,A)(j)= (-1)^{|f||j|} j \circ f$ for each  pair  $f$, $j$ of homogeneous elements.
\end{enumerate}
\end{example}

Using the Proposition \ref{AxB dg funtor} and examples  \ref{Hom(A,-) dg funtor} it follows that $\Ho_{\mathcal{A}}(-,-): \A^{op} \otimes \A  \longrightarrow \mathrm{DgMod}(K)$ is a dg-functor given for each  $(A,A') \in Obj(\A^{op} \otimes \A)$  as  $\Ho_{\mathcal{A}}(-,-)(A,A'):= \Ho_{\mathcal{A}}(A,A')$ and for   $\alpha: A \longrightarrow B$ and $ \alpha': A' \longrightarrow B'$  homogeneous morphism  in $\A$  we have that $\Ho_{\A}(\alpha^{op} \otimes \alpha')$ is defined  as follows:  $\Ho_{\A}(\alpha^{op} \otimes \alpha')(f):= (-1)^{|\alpha| (|\alpha'|+|f|)} \alpha' \circ f \circ \alpha$ for each homogeneous element $f \in \Ho_{\A}(B,A')$. 

Let us remember that the $\textbf{category of small dg-categories}$, which we denote by $\mathbf{dg}$-$\mathbf{Cat_{K}}$ is the category that has as objects all the small dg-categories, and as morphisms the dg-functors between them. To avoid sizing problems, it is requested that  $\mathrm{dg}$-$\mathrm{Cat}_{K}$ consists only of small dg-categories, fixed to a given universe, similar to $\mathrm{Cat}$ being the category of all small categories. To be able to work with dg-categories we require the notion of natural transformation in the dg context.

Now, we consider $\C$ and $\D$ two small dg-categories. In order to show that  the category of covariant dg-functors from $\mathcal{C}$ to $\mathcal{D}$ possesses a natural structure of differentially graded category, we need the following definition.

 \begin{definition}\label{dgnaturaltrans}
Let $\mathcal{C}$ and $\mathcal{D}$ be  small dg-categories, let $F,G: \mathcal{C} \longrightarrow \mathcal{D}$  two  dg-functors. A \textbf{dg-natural transformation of degree n}  denoted by  $\eta: F \longrightarrow G$,  is a family of morphisms $\{ \eta_{X}: F(X) \longrightarrow G(X)\}_{X \in \mathcal{C}}$ such that $\eta_{X} \in \Ho_{\mathcal{D}} ^{n} (F(X), G(X))$ for each $X \in \mathcal{C}$ and for all $f \in \Ho_{\mathcal{C}} ^{m} (X, Y)$ homogeneous morphism of degree $m$,  the following diagram  commutes up to the sign $(-1)^{nm}$ 
$$\xymatrix{ F(X) \ar[r]^{\eta_{X}} \ar[d]_{F(f)}& G(X) \ar[d]^{G(f)} \\
  F(Y) \ar[r]_{\eta_{Y}} & G(Y), }$$
i.e.,  $G(f)\circ \eta_{X} = (-1)^{nm} \eta_{Y}\circ F(f)$. Let us denote by $\mathrm{DgNat}^{n}(F,G)$ the set of all the dg-natural transformation of degree $n$ and we define:
$$\mathrm{DgNat}(F,G):=\bigoplus_{n\in \mathbb{Z}}\mathrm{DgNat}^{n}(F,G),$$
as the set of all the $\textbf{dg}$-$\textbf{natural transformations}$ from $F$ to $G$.
\end{definition} 

Let $\mathcal{C}$ and $\mathcal{D}$ be  small dg-categories. Taking into account the dg-natural transformations one can now construct a new category whose objects will be dg-functors going from $\C$ to $\D$ and whose morphisms are the natural dg-transformations $\mathrm{DgNat}(F,G)$. In this way,  the $\textbf{category of dg-covariant functors}$ denoted by $\mathrm{DgFun}(\C,\D)$ is defined as follows. 
\begin{enumerate}
\item [(a)] The class of objects are the dg-covariant functors $F: \C \longrightarrow \D$.
\item [(b)] For $F,G  \in \mathrm{DgFun}(\C,\D)$ the set of morphisms will be by definition the set of natural dg-transformations from $F$ to $G$, i.e,
$$\Ho_{\mathrm{DgFun}(\C,\D)}(F,G):= \mathrm{DgNat}(F,G).$$

\end{enumerate}
Moreover each space of dg-natural transformations $\mathrm{DgNat}(F,G)$ has a structure of differential graded $K$-module given by the differential $d:\mathrm{DgNat}(F,G)\longrightarrow \mathrm{DgNat}(F,G)$ which is a graded $K$-linear morphism of degree $1$ such that $d \circ d = 0$ and is defined by the rule:

\begin{equation}\label{diferentialNat}
d(\eta)_{X}:=d_{\mathrm{Hom}_{\mathcal{D}}(F(X),G(X))}(\eta_{X}),\,\,\, \forall \eta\in \mathrm{DgNat}^{n}(F,G)
\end{equation}
and for all $X\in \mathcal{C}$, where the morphism $d_{\mathrm{Hom}_{\mathcal{D}}(F(X),G(X))}:\mathrm{Hom}_{\mathcal{D}}(F(X),G(X))\longrightarrow \mathrm{Hom}_{\mathcal{D}}(F(X),G(X))$ is the differential in the set $\mathrm{Hom}_{\mathcal{D}}(F(X),G(X))$ coming from the fact that $\mathcal{D}$ is a dg-category.

The following proposition is easy to verify.

\begin{proposition}\label{Hom(C,D) dg cat}
Let $\mathcal{C}$, $\mathcal{D}$ be  small dg-categories. Then the category $\mathrm{DgFun}(\C,\D)$ is a dg-category.
 \end{proposition}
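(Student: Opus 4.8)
The plan is to verify the two defining axioms of a dg-category for $\mathrm{DgFun}(\mathcal{C},\mathcal{D})$, in each case transporting the corresponding statement from $\mathcal{D}$ through evaluation at objects of $\mathcal{C}$ and using that $F,G,\dots$ are dg-functors. As a preliminary I would record the $K$-category structure already implicit in the construction: the vertical composite of $\eta\colon F\longrightarrow G$ and $\theta\colon G\longrightarrow H$ is defined objectwise by $(\theta\circ\eta)_{X}:=\theta_{X}\circ\eta_{X}$, the identity of $F$ is the family $\{1_{F(X)}\}_{X\in\mathcal{C}}$, and $K$-bilinearity, associativity and unitality of this composition are inherited from the corresponding properties in $\mathcal{D}$ at each object $X$. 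One also checks that $\theta\circ\eta$ is again a dg-natural transformation: its degree is $|\theta|+|\eta|$ because $\theta_{X}\circ\eta_{X}\in\Ho_{\mathcal{D}}^{|\theta|+|\eta|}(F(X),H(X))$, and the sign-twisted commutation square for $\theta\circ\eta$ is obtained by stacking the squares for $\theta$ and for $\eta$, the sign $(-1)^{(|\theta|+|\eta|)m}$ arising as the product $(-1)^{|\theta|m}(-1)^{|\eta|m}$.

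For axiom (a) I would first check that each $\mathrm{DgNat}^{n}(F,G)$ is a $K$-submodule of $\prod_{X\in\mathcal{C}}\Ho_{\mathcal{D}}^{n}(F(X),G(X))$: if $\eta$ and $\xi$ each satisfy the twisted naturality identity $G(f)\circ\eta_{X}=(-1)^{nm}\eta_{Y}\circ F(f)$ and likewise for $\xi$, then so does $\lambda\eta+\mu\xi$ for $\lambda,\mu\in K$, by $K$-bilinearity of composition in $\mathcal{D}$; hence $\mathrm{DgNat}(F,G)=\bigoplus_{n}\mathrm{DgNat}^{n}(F,G)$ is a graded $K$-module. The real point is that the operator $d$ of \eqref{diferentialNat} is well defined, i.e.\ that $d(\eta)$ is a dg-natural transformation of degree $n+1$ whenever $\eta$ has degree $n$. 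The degree statement is immediate since $d_{\Ho_{\mathcal{D}}(F(X),G(X))}$ raises degrees by one. For the twisted naturality of $d(\eta)$, I would apply the differential of $\mathcal{D}$ to the identity $G(f)\circ\eta_{X}=(-1)^{nm}\eta_{Y}\circ F(f)$ and expand both sides using the Leibniz rule $d(\beta\circ\alpha)=d(\beta)\circ\alpha+(-1)^{|\beta|}\beta\circ d(\alpha)$ — which is precisely the chain-map condition on the composition of $\mathcal{D}$ read off from \eqref{tendgestructure} — together with $d_{\mathcal{D}}(F(f))=F(d_{\mathcal{C}}f)$ and $d_{\mathcal{D}}(G(f))=G(d_{\mathcal{C}}f)$, which hold because $F$ and $G$ are dg-functors. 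The two terms carrying $d_{\mathcal{C}}(f)$ are exactly the two sides of the naturality square of $\eta$ evaluated at the degree-$(m+1)$ morphism $d_{\mathcal{C}}(f)$, so they cancel, and the surviving terms rearrange to $G(f)\circ d(\eta)_{X}=(-1)^{(n+1)m}d(\eta)_{Y}\circ F(f)$, which is the required identity. Finally $d\circ d=0$ and the $K$-linearity and degree $1$ of $d$ are inherited directly from the differentials on the $\Ho_{\mathcal{D}}(F(X),G(X))$.

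For axiom (b), the composition $\theta_{F,G,H}\colon\mathrm{DgNat}(G,H)\otimes_{K}\mathrm{DgNat}(F,G)\longrightarrow\mathrm{DgNat}(F,H)$ has degree zero since $(\theta\circ\eta)_{X}$ lies in degree $|\theta|+|\eta|$, and it commutes with the differentials: evaluating at an object $X$ and using the Leibniz rule in $\mathcal{D}$,
\[
d(\theta\circ\eta)_{X}=d_{\Ho_{\mathcal{D}}}(\theta_{X}\circ\eta_{X})=d(\theta)_{X}\circ\eta_{X}+(-1)^{|\theta|}\theta_{X}\circ d(\eta)_{X}=\big(d(\theta)\circ\eta+(-1)^{|\theta|}\theta\circ d(\eta)\big)_{X},
\]
which is exactly the relation \eqref{Homdgestructure} defining the differential on a morphism space of $\mathrm{DgMod}(K)$; thus $\theta_{F,G,H}$ is a chain map of degree $0$. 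Combined with (a) this gives that $\mathrm{DgFun}(\mathcal{C},\mathcal{D})$ is a dg-category.

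The only step with genuine content is the well-definedness of $d$ on $\mathrm{DgNat}(F,G)$, i.e.\ showing that $d(\eta)$ still satisfies the sign-twisted naturality condition; everything else is routine bookkeeping inherited objectwise from $\mathcal{D}$. The delicate points there are (i) that differentiating the naturality identity for $\eta$ produces the extra terms $G(d_{\mathcal{C}}f)\circ\eta_{X}$ and $\eta_{Y}\circ F(d_{\mathcal{C}}f)$, which must be removed by invoking naturality of $\eta$ on the morphism $d_{\mathcal{C}}(f)$ (of degree $m+1$) rather than on $f$, and (ii) that the signs produced along the way ($(-1)^{m}$ from $|G(f)|=m$ and $(-1)^{n}$ from $|\eta_{Y}|=n$) must be tracked carefully so that the $d_{\mathcal{C}}(f)$-terms cancel and the remaining ones recombine into the correct twist $(-1)^{(n+1)m}$.
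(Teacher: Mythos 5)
Your verification is correct: the composition, the graded $K$-module structure on $\mathrm{DgNat}(F,G)$, the well-definedness of the differential of \eqref{diferentialNat} (with the $G(d_{\mathcal{C}}f)$- and $F(d_{\mathcal{C}}f)$-terms cancelling via naturality of $\eta$ at the degree-$(m{+}1)$ morphism $d_{\mathcal{C}}(f)$, leaving the twist $(-1)^{(n+1)m}$), and the chain-map property of composition all check out. The paper gives no proof of this proposition, stating only that it is ``easy to verify,'' so your argument is exactly the routine verification the authors leave to the reader.
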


If we take $K$ as the dg-category with the single object $\{\star\}$ as the unit
object, we have that the category $\mathrm{dg}$-$\mathrm{Cat}_{K}$ is a symmetric tensor category, i.e. we have the
adjunction
$$ \Ho_{\mathrm{dg}-\mathrm{Cat}_{K}}(\A \otimes \B, \C)\cong \Ho_{\mathrm{dg}-\mathrm{Cat}_{K}}(\A,\mathrm{Hom}_{\mathrm{dg}-\mathrm{Cat}_{K}}(\B,\C))$$
for $\A,\B,\C$ dg-categories. The pair $(\otimes,\Ho)$ makes $\mathrm{dg}$-$\mathrm{Cat}_{K}$ into a closed symmetric monoidal category (see \cite{Kelly}). This structure will be important for the remainder of this work  since the immediate consequence of the above fact is that there exists an isomorphism of dg-categories

\begin{equation}\label{bifuntor2variables}
\mathrm{DgFun}(\A \otimes \B, \C)\cong \mathrm{DgFun}(\A,\mathrm{DgFun}(\B,\C)).
\end{equation}
Note that this  isomorphism is related to the Lemma \ref{AxB dg funtor}.\\
Just as dg-algebras generalize to dg-categories, the same game can be played with dg-modules and this is crucial for the rest of the paper. We will introduce dg-modules.

\begin{definition}
Let $\C$ be a small dg-category. We  define a  {\textbf{left dg $\C$-module}} to be a dg-functor  $M: \C \longrightarrow \mathrm{DgMod}(K)$ while a right dg $\C$-module is a dg-functor  $N: \C^{op} \longrightarrow \mathrm{DgMod}(K)$.
\end{definition}

Now, we define the dg-category of left dg $\mathcal{C}$-modules.
\begin{definition}\label{DfModC}
Let $\C$ be a small dg-category. The category of left dg $\C$-modules denoted by $\mathrm{DgMod}(\C)$ has all dg $\C$-modules as objects and morphisms of dg-functors as the dg-natural transformations. That is,
$$\mathrm{DgMod}(\C):=\mathrm{DgFun}(\mathcal{C},\mathrm{DgMod}(K)).$$
\end{definition}

By Proposition \ref{Hom(C,D) dg cat}, we have that $\mathrm{DgMod}(\C)$ is a dg-category such that for any dg-functors $F,G: \C \longrightarrow \mathrm{DgMod}(K)$

\begin{equation}\label{morfismosdgmod}
\Ho_{\mathrm{DgMod}(\C)}(F,G)= \bigoplus_{n \in \Z} \Ho^{n}_{\mathrm{DgMod}(\C)}(F,G),
\end{equation}

where by definition we set $\Ho^{n}_{\mathrm{DgMod}(\C)}(F,G):=\mathrm{DgNat}^{n}(F,G)$ (see Definition \ref{dgnaturaltrans}). Moreover,  $\Ho_{\mathrm{DgMod}(\C)}(F,G)$ has a structure of differential graded $K$-module given by the differential $d: \Ho_{\mathrm{DgMod}(\C)}(F,G)\longrightarrow  \Ho_{\mathrm{DgMod}(\C)}(F,G)$ which is a graded $K$-linear morphism of degree $1$ such that $d \circ d = 0$ and is defined by the rule:

\begin{equation}\label{dgNatmod}
d(\eta)_{X}:=d_{\mathrm{Hom}_{\mathrm{DgMod}(K)}(F(X),G(X))}(\eta_{X}),\,\,\, \forall \eta\in  \Ho^{n}_{\mathrm{DgMod}(\C)}(F,G)
\end{equation}
and for all $X\in \mathcal{C}$, where the morphism 
$$d_{\mathrm{Hom}_{\mathrm{DgMod}(K)}(F(X),G(X))}\!:\!\mathrm{Hom}_{\mathrm{DgMod}(K)}(F(X),G(X))\rightarrow \mathrm{Hom}_{\mathrm{DgMod}(K)}(F(X),G(X))$$ is the differential  defined in the Equation  \ref{Homdgestructure}.\\

Now we recall the Yoneda's Lemma and Yoneda's emdedding in the context of differential graded categories, for more details see Section 4.3 in  \cite{Takeda} or Equation (2.34) in p. 34 in  \cite{Kelly} for the general context of enriched categories.

\begin{proposition}(Dg  Yoneda's Lemma)
Let $M\in \mathrm{DgMod}(\mathcal{C})$ be, then there exists an isomorphism of complexes
\begin{align*}
\mathrm{Hom}_{\mathrm{DgMod}(\mathcal{C})}\Big(\mathrm{Hom}_{\mathcal{C}}(C,-),M\Big)\simeq M(C)\\
\eta\mapsto \eta_{C}(1_{C})
\end{align*}
which is natural in $M$ and $C$.
\end{proposition}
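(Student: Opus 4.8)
The plan is to prove the dg-Yoneda Lemma by exhibiting the explicit inverse to the evaluation map $\eta \mapsto \eta_C(1_C)$ and checking that both composites are the identity, then verifying naturality in $M$ and $C$. First I would set up the two maps: in one direction, $\Phi \colon \mathrm{Hom}_{\mathrm{DgMod}(\mathcal{C})}(\mathrm{Hom}_{\mathcal{C}}(C,-),M) \longrightarrow M(C)$ sends a dg-natural transformation $\eta$ of degree $n$ to $\eta_C(1_C) \in M(C)^n$ (using that $1_C \in \mathrm{Hom}_{\mathcal{C}}(C,C)^0$ and $\eta_C$ has degree $n$). In the other direction, given $m \in M(C)^n$, I would define a candidate transformation $\Psi(m)$ by the rule $\Psi(m)_X \colon \mathrm{Hom}_{\mathcal{C}}(C,X) \longrightarrow M(X)$, $f \mapsto (-1)^{\text{(sign)}} M(f)(m)$, where the sign is whatever is forced by the chosen conventions in Example \ref{Hom(A,-) dg funtor}(a) and Definition \ref{dgnaturaltrans}; I expect the sign to be $(-1)^{n|f|}$ or trivial depending on how the degree of $\eta$ interacts with $M(f)$.

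The key steps, in order, are: (1) check that $\Psi(m)_X$ is a graded $K$-linear map of degree $n$ for each $X$ — immediate since $M(f)$ has degree $|f|$ and $m$ has degree $n$; (2) check the dg-naturality square for $\Psi(m)$, i.e. that for $g \in \mathrm{Hom}_{\mathcal{C}}^m(X,Y)$ one has $M(g) \circ \Psi(m)_X = (-1)^{nm}\Psi(m)_Y \circ \mathrm{Hom}_{\mathcal{C}}(C,g)$, which reduces to functoriality of $M$ (i.e. $M(g \circ f) = M(g) \circ M(f)$ up to the appropriate sign coming from the composition law in $\mathrm{DgMod}(K)$) together with bookkeeping of the signs from the definition of $\mathrm{Hom}_{\mathcal{C}}(C,-)$ on morphisms; (3) verify $\Phi \circ \Psi = \mathrm{id}$, which is the computation $\Psi(m)_C(1_C) = (-1)^{?} M(1_C)(m) = m$ since $M(1_C) = \mathrm{id}_{M(C)}$; (4) verify $\Psi \circ \Phi = \mathrm{id}$, i.e. for a given $\eta$ we must show $\Psi(\eta_C(1_C)) = \eta$, which is exactly where dg-naturality of $\eta$ is used: applying the naturality square of $\eta$ to the morphism $f \in \mathrm{Hom}_{\mathcal{C}}(C,X)$ and the element $1_C$ gives $\eta_X(f \circ 1_C) = \eta_X(f)$ on one side and $(-1)^{n|f|} M(f)(\eta_C(1_C))$ on the other, matching the definition of $\Psi$; (5) observe that $\Phi$ is a morphism of complexes, i.e. it commutes with the differentials defined in Equation \ref{dgNatmod} and Equation \ref{Homdgestructure} — this follows because $d(\eta)_C(1_C) = d_{M(C)}(\eta_C(1_C)) - (-1)^{|\eta|}\eta_C(d(1_C))$ and $d(1_C) = 0$ in the dg-category $\mathcal{C}$.

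Finally, for naturality in $M$ and $C$: naturality in $M$ means that for a morphism $\varphi \colon M \to M'$ in $\mathrm{DgMod}(\mathcal{C})$ the square relating $\Phi_M$, $\Phi_{M'}$, postcomposition with $\varphi$, and $\varphi_C$ commutes, which is immediate from $(\varphi \circ \eta)_C(1_C) = \varphi_C(\eta_C(1_C))$; naturality in $C$ means that for $h \colon C' \to C$ in $\mathcal{C}$ (inducing $\mathrm{Hom}_{\mathcal{C}}(h,-) \colon \mathrm{Hom}_{\mathcal{C}}(C,-) \to \mathrm{Hom}_{\mathcal{C}}(C',-)$ and $M(h) \colon M(C') \to M(C)$, with the variance handled by $\mathcal{C}^{op}$) the corresponding square commutes, which again unwinds to the definition of $\mathrm{Hom}_{\mathcal{C}}(h,-)$ on the identity and the functoriality of $M$.

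The main obstacle I anticipate is not conceptual but the sign bookkeeping: with the conventions fixed in this paper (the $(-1)^{|f||j|}$ in Example \ref{Hom(A,-) dg funtor}(b), the $(-1)^{nm}$ commuting square in Definition \ref{dgnaturaltrans}, and the Koszul signs in the composition law of $\mathrm{DgMod}(K)$ and in $d(\alpha) = d_N \circ \alpha - (-1)^{|\alpha|}\alpha \circ d_M$), one must choose the sign in the definition of $\Psi(m)_X(f)$ so that \emph{all} of steps (2)–(5) go through simultaneously, and confirm that this single choice is consistent. I would isolate this by first doing the degree-zero case (where all signs are trivial and the statement is the classical enriched Yoneda lemma) and then tracking how each nonzero degree introduces its sign, citing \cite[Section 4.3]{Takeda} or \cite[Eq. (2.34)]{Kelly} for the analogous computation in the general enriched setting to confirm the final normalization.
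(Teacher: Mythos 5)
Your proposal is correct: the paper gives no proof of this proposition, deferring to \cite[Section 4.3]{Takeda} and \cite[Eq.\ (2.34)]{Kelly}, and your argument is exactly the standard one found there. The undetermined sign resolves to $\Psi(m)_X(f)=(-1)^{n|f|}M(f)(m)$ for $m\in M(C)^{n}$ (forced by setting $X=C$, $g=1_{C}$ in the naturality square $M(f)\circ\eta_{X}=(-1)^{n|f|}\eta_{Y}\circ\mathrm{Hom}_{\mathcal{C}}(C,f)$), and with that choice all of your steps (2)--(5) go through, using $d(1_{C})=0$ and the fact that $M(g\circ f)=M(g)\circ M(f)$ holds with no extra sign.
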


\begin{proposition}(Dg-Yoneda's embedding)\label{dgYoneda}
Let $\mathcal{C}$ be a dg-category. Then the dg-Yoneda embedding
$$Y:\mathcal{C}\longrightarrow \mathrm{DgMod}(\mathcal{C}^{op})$$
given as $Y(C):=\mathrm{Hom}_{\mathcal{C}}(-,C)$  is a fully faithful dg-funtor.
\end{proposition}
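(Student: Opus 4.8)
The plan is to prove the statement in two stages: first that $Y$ is a well-defined dg-functor, and then that it is fully faithful, the second stage being a direct consequence of the Dg Yoneda Lemma.

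For the first stage I would exploit the symmetric monoidal closed structure on $\mathrm{dg}\text{-}\mathrm{Cat}_K$ rather than check everything by hand. We have already noted, via Lemma~\ref{AxB dg funtor} and Example~\ref{Hom(A,-) dg funtor}, that $\mathrm{Hom}_{\mathcal{C}}(-,-)\colon \mathcal{C}^{op}\otimes\mathcal{C}\longrightarrow \mathrm{DgMod}(K)$ is a dg-functor. Using the symmetry isomorphism $\mathcal{C}^{op}\otimes\mathcal{C}\cong \mathcal{C}\otimes\mathcal{C}^{op}$ this yields a dg-functor $\mathcal{C}\otimes\mathcal{C}^{op}\longrightarrow \mathrm{DgMod}(K)$, and applying the exponential isomorphism \eqref{bifuntor2variables} with $\mathcal{A}=\mathcal{C}$, $\mathcal{B}=\mathcal{C}^{op}$ and target category $\mathrm{DgMod}(K)$ produces a dg-functor
$$\mathcal{C}\longrightarrow \mathrm{DgFun}(\mathcal{C}^{op},\mathrm{DgMod}(K))=\mathrm{DgMod}(\mathcal{C}^{op}).$$
Unwinding the definitions of the symmetry and of the currying, one checks that this dg-functor sends an object $C$ to $\mathrm{Hom}_{\mathcal{C}}(-,C)$ and a homogeneous morphism $g\colon C\to C'$ to the natural transformation $Y(g)$ whose component at $X$ is $f\mapsto g\circ f$; that is, it is exactly $Y$. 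In particular $Y$ automatically preserves degrees and composition, commutes with differentials, and each $Y(g)$ is a dg-natural transformation of degree $|g|$ in the sense of Definition~\ref{dgnaturaltrans}. (If one prefers, all of this can instead be verified directly from \eqref{Homdgestructure}, \eqref{dgNatmod} and the sign in $\mathrm{Hom}_{\mathcal{A}}(\alpha^{op}\otimes\alpha')(f)=(-1)^{|\alpha|(|\alpha'|+|f|)}\alpha'\circ f\circ\alpha$; this is routine but is the only place where care with Koszul signs is needed.)

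For the second stage, fix $C,C'\in\mathcal{C}$. Because $Y$ is a dg-functor, the map
$$Y_{C,C'}\colon \mathrm{Hom}_{\mathcal{C}}(C,C')\longrightarrow \mathrm{Hom}_{\mathrm{DgMod}(\mathcal{C}^{op})}\big(Y(C),Y(C')\big),\qquad g\longmapsto Y(g),$$
is a morphism of dg $K$-modules, and it suffices to show it is bijective. I would apply the Dg Yoneda Lemma with $\mathcal{C}$ replaced by $\mathcal{C}^{op}$ and $M=Y(C')\in\mathrm{DgMod}(\mathcal{C}^{op})$: since $\mathrm{Hom}_{\mathcal{C}^{op}}(C,-)=\mathrm{Hom}_{\mathcal{C}}(-,C)=Y(C)$ and $M(C)=Y(C')(C)=\mathrm{Hom}_{\mathcal{C}}(C,C')$, this gives an isomorphism of complexes $\mathrm{Hom}_{\mathrm{DgMod}(\mathcal{C}^{op})}\big(Y(C),Y(C')\big)\xrightarrow{\ \sim\ }\mathrm{Hom}_{\mathcal{C}}(C,C')$, $\eta\mapsto \eta_C(1_C)$. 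Precomposing with $Y_{C,C'}$ and using the formula for $Y(g)$ from the first stage, $g\mapsto Y(g)\mapsto Y(g)_C(1_C)=g\circ 1_C=g$, so the composite is the identity on $\mathrm{Hom}_{\mathcal{C}}(C,C')$. Hence $Y_{C,C'}$ is the inverse of an isomorphism of complexes, so it is itself an isomorphism of dg $K$-modules. Since $C,C'$ were arbitrary, $Y$ is fully faithful.

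I expect the only real obstacle to be the sign bookkeeping in the first stage — checking that $g\mapsto Y(g)$ with $Y(g)_X(f)=g\circ f$ genuinely lands in $\mathrm{DgNat}^{|g|}$ and respects differentials. Routing it through \eqref{bifuntor2variables} reduces this to facts already in place; after that, the only new computation in the whole argument is the trivial identity $Y(g)_C(1_C)=g$.
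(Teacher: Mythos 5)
Your argument is correct. Note, however, that the paper does not actually prove this proposition: it is stated as a recalled result, with the reader referred to Takeda and to Kelly's book on enriched categories, so there is no in-paper proof to compare against. Your two-stage argument --- obtaining functoriality of $Y$ by currying the dg-bifunctor $\mathrm{Hom}_{\mathcal{C}}(-,-)\colon \mathcal{C}^{op}\otimes\mathcal{C}\to\mathrm{DgMod}(K)$ through the closed monoidal structure \eqref{bifuntor2variables}, and then deducing full faithfulness by evaluating the Dg Yoneda isomorphism at $M=Y(C')$ and checking that the composite $g\mapsto Y(g)_{C}(1_{C})=g$ is the identity --- is precisely the standard enriched-category proof that those references give, and all the steps check out: with the sign convention $\mathrm{Hom}_{\mathcal{C}}(\alpha^{op}\otimes\alpha')(f)=(-1)^{|\alpha|(|\alpha'|+|f|)}\alpha'\circ f\circ\alpha$ one indeed gets $Y(g)_{X}(f)=g\circ f$ with no sign (since the contravariant slot carries the identity), and $Y(g)$ satisfies the naturality-up-to-sign condition of Definition~\ref{dgnaturaltrans} in degree $|g|$. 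The only point worth making explicit in a final write-up is that the Yoneda isomorphism is an isomorphism of complexes and $Y_{C,C'}$ is a degree-zero chain map, so that $Y_{C,C'}=\Phi^{-1}$ is an isomorphism in $\mathrm{DgMod}_{str}(K)$, which is exactly what Definition~\ref{dgadjuncion}(b) requires of a dg-fully faithful functor.
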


In the following Definition we follow the notation used in \cite[Definition 3.2.2]{Yeku1} on page 72.

\begin{definition}
The category which will be  denoted by $\bf{DgMod}_{\textbf{Str}}(K)$ has as objects the dg $K$-modules; and
given $M$ and $N$ two dg $K$-modules the set of morphisms from $M$ to $N$ in the category $\mathrm{DgMod}_{str}(K)$ is by definition all the morphisms  $f:M\longrightarrow N$ of $K$-modules of degree 0 that commutes with the differentials. That is,
$$\Ho_{\mathrm{DgMod}_{str}(K)}(M, N ):= \{f:M\longrightarrow N\mid f(M^{i})\subseteq N^{i}\,\,\forall i\in \mathbb{Z}\,\,\text{and}  f\circ d_{M}=d_{N}\circ f\}.$$
\end{definition}
We note that we can think $\mathrm{DgMod}_{Str}(K)$ as the category of complexes of $K$-modules.\\
It is well known that $\mathrm{DgMod}_{str}(K)$ is a $\textbf{symmetric monoidal closed category}$ where the unit is the module $K$ and $a_{X,Y,Z}:(X\otimes Y)\otimes Z\longrightarrow X\otimes (Y \otimes Z)$, $l_{X}:K\otimes X:\longrightarrow X$, $r_{X}:X\otimes K\longrightarrow X$ are the natural isomorphisms.\\
In the language of enriched categories (see \cite{Kelly}) we have that a dg-category is just a category enriched over $\mathrm{DgMod}_{str}(K)$.\\

\section{A triple adjoint of dg-functors}

We recall that if $R$ and $S$ are associative rings with units and $\varphi:R\longrightarrow S$ is a morphism of rings we have the functor restriction of scalars $\varphi_{\ast}:\mathrm{Mod}(S)\longrightarrow \mathrm{Mod}(R)$, where $\mathrm{Mod}(S)$ denotes the category of left $S$-modules for a ring with unit $S$. In this case it is well known that $\varphi_{\ast}$ has a left and right adjoint. That is, we have the following diagram

$$\xymatrix{\mathrm{Mod}(S)\ar[rrr]|{\varphi_{\ast}}  &  &&\mathrm{Mod}(R)
\ar@<-2ex>[lll]_{\varphi^{\ast}}\ar@<2ex>[lll]^{\varphi^{!}}}$$
where $(\varphi^{\ast}, \varphi_{\ast})$ and $(\varphi_{\ast}, \varphi^{!})$ are adjoint pairs.
We have the following Definition that is a generalization of restriction of scalars in module theory.

\begin{definition}
Let $F:\mathcal{A}\longrightarrow \mathcal{C}$ a dg-functor. We have the induced dg-functor
$$F_{\ast}:\mathrm{DgMod}(\mathcal{C})\longrightarrow \mathrm{DgMod}(\mathcal{A})$$
given as $F_{\ast}(S):=SF$ for all $S\in \mathrm{DgMod}(\mathcal{C})$.
\end{definition}
The goal of all this section is to show that $F_{\ast}$ has a left and right adjoint (see Proposition \ref{dgadjuninduced}). The left and right adjoints of $F_{\ast}$ are computed as left and right Kan extensions, respectively. So, in the Subsection \ref{endcoend}  we first recall the notions of ends and coends, then  in  the Subsection \ref{weightedlimi} we study weighted limits and colimits in the dg-context.  In  the Subsection \ref{Kanexten} we will give the notion of Kan extensions, and finally in the Subsection \ref{PropertyKan}, we recall the universal property that has the left and right Kan extensions  which help us to see that $F_{\ast}$ has left and right adjoints.

\subsection{Ends and coends in dg-categories}\label{endcoend}

Since a dg-category is just a category enriched over  $\mathrm{DgMod}_{str}(K)$, in this section we will make use of the concepts and results developed in Kelly's book \cite{Kelly}, and some other definitions and results from Emily Riehl's book (\cite{Emily1}) for the particular case of the symmetric monoidal closed category $\mathcal{V}=(\textbf{DgMod}_{\textbf{Str}}(K),\otimes, a,l, r, K)$. First we recall the following notion defined for dg-categories.

\begin{definition}\label{closezeromap}
Let $\mathcal{A}$ be  a $\mathrm{dg}$-category. We say that $\alpha\in \mathcal{A}(X,Y)$ is a $\textbf{closed}$ $\textbf{degree}$ $\textbf{zero}$ $\textbf{map}$ if $\alpha \in \mathcal{A}(X,Y)^{0}$ and $d_{\mathcal{A}(X,Y)}(f)=0.$
\end{definition}

Using that $\mathcal{V}=(\textbf{DgMod}_{\textbf{Str}}(K),\otimes, a,l, r, K)$ is a  symmetric monoidal closed category, the definition of end given in \cite{Kelly} on page 27 can be translated in the following.

\begin{definition} \label{def dg end3}
Let $T:\mathcal{A}^{op} \otimes \mathcal{A} \longrightarrow \mathrm{DgMod}(K)$ be dg-functor. An \textbf{end for T} is an object $\int_{A \in \mathcal{A}} T(A,A)  \in \mathrm{DgMod}(K)$ together with a family of closed degree zero maps 
$$\left\{\lambda_{A}: \int_{A \in  \mathcal{A}}F(A,A) \longrightarrow F(A,A) \right\}_{A \in  \mathcal{A}}$$
in $\mathrm{DgMod}(K)$ such that for  each $f \in \mathcal{A}(A,B)$ the following conditions are satisfied.  
\begin{enumerate}
\item[(a)] The following diagram commutes in $\mathrm{DgMod}(K)$:
$$ \resizebox{0.4 \textwidth}{!}{ \xymatrix{ \int_{A \in \mathcal{A}}T(A,A) \ar[r]^{\lambda_{A}}\ar[d]_{\lambda_{B}}& T(A,A)\ar[d]^{T(A,f)}\\
 T(B,B)\ar[r]_{T(f,B)} & T(A,B),}}$$
 that is, $T(f,B)\circ \lambda_{B}=T(A,f)\circ \lambda_{A}$.
 \item[(b)] For any other object $Y \in \mathrm{DgMod}(K)$ and another family of closed degree zero maps $\{ \alpha_{A}:Y \longrightarrow T(A,A)\}_{A \in \mathcal{A}}$ such that $T(f,B)\circ \alpha_{B}=T(A,f)\circ \alpha_{A}$, there exists a unique closed degree zero map $\gamma: Y \longrightarrow \int_{A \in \mathcal{A}} T(A,A)$ such that the following diagram commutes 
$$\resizebox{0.4\textwidth}{!}{\xymatrix{ Y \ar@{.>}[rd]^{\gamma} \ar@/^/[rrd]^{\alpha_{A}}\ar@/_/[rdd]_{\alpha_{B}}& & \\&  \int_{A \in \mathcal{A}}T(A,A) \ar[r]^{\lambda_{A}}\ar[d]_{\lambda_{B}}& T(A,A)\ar[d]^{T(A,f)}\\
 &  T(B,B)\ar[r]_{T(f,B)}& T(A,B),  }}$$
 i.e., $\alpha_{A}=\lambda_{A}\circ \gamma$  for each $A \in \mathcal{A}$.
\end{enumerate}
\end{definition}

Dually we have the notion of $\textbf{coend}$ of a dg-functor $T:\mathcal{A}^{op} \otimes \mathcal{A} \longrightarrow \mathrm{DgMod}(K)$, we left to the reader to dualize Definition \ref{def dg end3}.\\

\begin{definition}\label{leftrightaction}
Let $T:\mathcal{A}^{op}\otimes \mathcal{B}\longrightarrow \mathrm{DgMod}(K)$ be a dg-functor.
\begin{enumerate}
\item [(a)] $\textbf{Right}$ $\mathcal{A}$-$\textbf{action}$. Let $f \in \mathcal{A}^{op}(A,A')=\mathcal{A}(A',A)$  of degree $|f|$. For $x\in T(A,B'')$ of degree $|x|$ we define
$$ x \cdot f:= (-1)^{|x||f|} T(f \otimes 1_{B''})(x).$$
\item [(b)] $\textbf{Left}$ $\mathcal{B}$-$\textbf{action}$. Let $g\in \mathcal{B}(B,B')$ of degree $|g|$. For $y\in T(A'',B)$ of degree $|y|$  we define
$$ g\cdot y:=T(1_{A''}\otimes g)(y).$$
\end{enumerate}
\end{definition}

There are more general notions of end and coends where the category $\mathrm{DgMod}(K)$ is replaced by an arbitrary dg-category $\mathcal{B}$. However, the existence of ends and coends can not be assured in that generality. But in the case of $\mathrm{DgMod}(K)$, the following proposition tell us that they always exists.

\begin{proposition}\label{descripendModK}
Let $T:\mathcal{A}^{op}\otimes \mathcal{A}\longrightarrow \mathrm{DgMod}(K)$ be a dg-functor. The end of $T$ is isomorphic to the subcomplex $\mathbf{L}$ of $\prod_{A\in \mathcal{A}}T(A,A)$ whose component of degree $i$ is defined as:
$$\mathbf{L}^{i}:=\left\{x=(x_{A})_{A\in \mathcal{A}}\in \Big(\prod_{A\in \mathcal{A}}T(A,A)\Big)^{i}\mid f \cdot x_{A}=(-1)^{ij} x_{B}\cdot f,\,\,\forall f\in \mathcal{A}^{j}(A,B)\right\}.$$
For each $A\in \mathcal{A}$, the morphism $\lambda_{A}: \mathbf{L}\longrightarrow T(A,A)$ is defined as follows: for $x=(x_{A})_{A\in \mathcal{A}}\in \mathbf{L}^{i}$ we set $\lambda_{A}(x):=x_{A}.$
\end{proposition}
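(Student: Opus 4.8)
The plan is to show that the subcomplex $\mathbf{L}$ together with the projections $\lambda_A$ satisfies the universal property of Definition~\ref{def dg end3}, and then invoke the uniqueness of ends (up to canonical isomorphism) to conclude that $\int_{A\in\mathcal A}T(A,A)\cong\mathbf{L}$. The work splits into three pieces: (i) check that $\mathbf{L}$ is genuinely a subcomplex of $\prod_{A\in\mathcal A}T(A,A)$, i.e. that it is stable under the differential; (ii) check that the cone $(\lambda_A)_{A\in\mathcal A}$ is ''wedge-like'', that is, $T(A,f)\circ\lambda_A=T(f,B)\circ\lambda_B$ for every $f\in\mathcal A(A,B)$ — with the caveat that in the graded setting one must track signs, so this will really be the statement that for homogeneous $f$ the two composites agree up to the sign built into the defining equation $f\cdot x_A=(-1)^{ij}x_B\cdot f$; and (iii) given another family $\{\alpha_A\colon Y\to T(A,A)\}$ of closed degree-zero maps satisfying the same compatibility, produce the unique factoring map $\gamma\colon Y\to\mathbf{L}$.

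For (iii), I would define $\gamma$ componentwise: for $y\in Y$ set $\gamma(y):=(\alpha_A(y))_{A\in\mathcal A}\in\prod_{A\in\mathcal A}T(A,A)$. The compatibility hypothesis on the $\alpha_A$, unwound through the definitions of the left and right actions in Definition~\ref{leftrightaction} (recall $x\cdot f=(-1)^{|x||f|}T(f\otimes 1)(x)$ and $g\cdot y=T(1\otimes g)(y)$, and that $T(f,B)$, $T(A,f)$ are exactly the maps induced by $T$ on the $\mathcal A^{op}$- and $\mathcal A$-variables), shows precisely that $\gamma(y)$ lands in $\mathbf{L}$. Since each $\alpha_A$ is degree zero and closed, so is $\gamma$; the identity $\lambda_A\circ\gamma=\alpha_A$ is immediate from the definition of $\lambda_A$ as the $A$-th projection; and uniqueness of $\gamma$ follows because any $\gamma'$ with $\lambda_A\circ\gamma'=\alpha_A$ for all $A$ must have all components equal to those of $\gamma$, hence $\gamma'=\gamma$ as maps into the product.

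The main obstacle is (ii), the sign bookkeeping: one must carefully translate the diagram $T(f,B)\circ\lambda_B=T(A,f)\circ\lambda_A$ from Definition~\ref{def dg end3}, which is literally stated for closed degree-zero $f$, into the graded condition defining $\mathbf L^i$, which quantifies over homogeneous $f\in\mathcal A^j(A,B)$ of arbitrary degree $j$ and carries the factor $(-1)^{ij}$. Concretely, one rewrites $T(A,f)=T(1_A\otimes f)$ and $T(f,B)=T(f^{op}\otimes 1_B)$ using the explicit description of $\Ho_{\mathcal A}(-,-)$ as a dg-bifunctor recalled after Lemma~\ref{AxB dg funtor}, so that for $x=(x_A)\in\mathbf L^i$ the equation $T(A,f)(\lambda_A x)=T(f,B)(\lambda_B x)$ becomes $g\cdot x_A$ versus $x_B\cdot f$ up to the sign coming from Definition~\ref{leftrightaction}(a), which matches the $(-1)^{ij}$ in the definition of $\mathbf L^i$. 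I would verify this compatibility on homogeneous elements, reduce the general universal-property test to this case by $K$-linearity, and treat the closedness of $\mathbf L$ under $d$ in part (i) by the Leibniz-type formula: $d_{\prod T(A,A)}(x)$ has $A$-component $d_{T(A,A)}(x_A)$, and applying $d$ to both sides of $f\cdot x_A=(-1)^{ij}x_B\cdot f$ together with the fact that $T$ is a dg-functor (so the induced maps commute with differentials) shows $d(x)\in\mathbf L^{i+1}$. Once these verifications are in place, $\mathbf L$ satisfies the defining universal property of $\int_{A\in\mathcal A}T(A,A)$, and uniqueness of the end yields the claimed isomorphism together with the stated description of the $\lambda_A$.
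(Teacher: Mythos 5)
The paper does not actually prove this proposition: it defers entirely to Proposition 3.4 of \cite{Genovese1}. Your self-contained verification of the universal property is therefore a different (and more informative) route, and it is essentially the argument the cited reference carries out, so the overall plan is the right one. Two points deserve to be made fully explicit. For your step (ii), unwinding Definition \ref{leftrightaction} gives, for $x=(x_A)\in\mathbf{L}^{i}$ and homogeneous $f\in\mathcal{A}^{j}(A,B)$, that $f\cdot x_A=T(1_A\otimes f)(x_A)$ while $x_B\cdot f=(-1)^{ij}\,T(f\otimes 1_B)(x_B)$; hence the membership condition $f\cdot x_A=(-1)^{ij}x_B\cdot f$ is \emph{literally equivalent} to $T(1_A\otimes f)(x_A)=T(f\otimes 1_B)(x_B)$, i.e.\ the two $(-1)^{ij}$'s cancel and the wedge identity $T(A,f)\circ\lambda_A=T(f,B)\circ\lambda_B$ holds on the nose, with no residual sign --- your phrase ``agree up to the sign'' slightly overstates the subtlety, though your final sentence about the signs matching shows you see the cancellation. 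For your step (i), when you differentiate the identity $T(1_A\otimes f)(x_A)=T(f\otimes 1_B)(x_B)$ using that $T$ is a dg-functor (so $d(T(1_A\otimes f))=T(1_A\otimes d f)$ and the Leibniz formula for the hom-complex differential applies), the terms $T(1_A\otimes df)(x_A)$ and $T(df\otimes 1_B)(x_B)$ appear, and cancelling them requires invoking the membership condition for $df\in\mathcal{A}^{j+1}(A,B)$ as well as for $f$; this is available because $\mathbf{L}^{i}$ quantifies over homogeneous $f$ of every degree, but it should be said. Step (iii) is routine exactly as you describe. With these two clarifications the argument is complete.
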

\begin{proof}
See Proposition 3.4 in p. 634 in \cite{Genovese1}.
\end{proof}

The following corollary tells us that the dg-natural transformations given in Definition \ref{dgnaturaltrans} can be computed as an end.

\begin{corollary}\label{construcciondgnat}
Let $F,G:\mathcal{A}\longrightarrow \mathcal{B}$ be dg-functors. Consider the dg-funtor 
$$T:=\mathcal{B}(F,G):\mathcal{A}^{op}\otimes \mathcal{A}\longrightarrow \mathrm{DgMod}(K).$$
Hence, $\mathrm{DgNat}(F,G)=\int_{A\in \mathcal{A}}T(A,A),$  where the family of morphisms
$$\left\{\lambda_{A}:\mathrm{DgNat}(F,G)\longrightarrow \mathcal{B}(F(A),G(A))\right\}_{A\in \mathcal{A}}$$
is defined as follows: for $\eta=\{\eta_{A}\}_{A\in \mathcal{A}}\in \mathrm{DgNat}^{n}(F,G)$  the map $\lambda_{A}$ is given as:
$$\lambda_{A}(\eta)=\eta_{A}\in \mathcal{B}(F(A),G(A))^{n}.$$
\end{corollary}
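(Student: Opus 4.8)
The plan is to combine Proposition \ref{descripendModK}, which gives an explicit description of the end as a subcomplex of a product, with the defining property (the commutation-up-to-sign condition) of a dg-natural transformation from Definition \ref{dgnaturaltrans}. The strategy is to show that the subcomplex $\mathbf{L}\subseteq \prod_{A\in\mathcal{A}}\mathcal{B}(F(A),G(A))$ produced by Proposition \ref{descripendModK} for the dg-functor $T=\mathcal{B}(F,G)$ is, component by component and degree by degree, literally the same $K$-module as $\mathrm{DgNat}(F,G)$, and then to check that the differentials and the family $\{\lambda_A\}$ agree so that the identification is an isomorphism of complexes satisfying the universal property of the end.

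First I would unwind the dg-functor $T=\mathcal{B}(F,G):\mathcal{A}^{op}\otimes\mathcal{A}\longrightarrow\mathrm{DgMod}(K)$ on objects and morphisms. On objects, $T(A,B)=\mathcal{B}(F(A),G(B))$. On a homogeneous morphism $f^{op}\otimes g$, using Lemma \ref{AxB dg funtor} and Example \ref{Hom(A,-) dg funtor}, $T(f^{op}\otimes g)$ acts on $h\in\mathcal{B}(F(B),G(A))$ essentially by $h\mapsto \pm\, G(g)\circ h\circ F(f)$ with the Koszul sign dictated by the conventions already fixed in the excerpt for $\mathrm{Hom}_{\mathcal{A}}(-,-)$ composed with $F$ and $G$. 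In particular, for $f\in\mathcal{A}^{j}(A,B)$, the right $\mathcal{A}$-action and left $\mathcal{A}$-action of Definition \ref{leftrightaction} specialize to $f\cdot x_A = (-1)^{|x_A||f|}\,T(f^{op}\otimes 1)(x_A)$ and $x_B\cdot f = T(1\otimes f)(x_B)$, which translate into $f\cdot x_A = \pm\, x_A\circ F(f)$ and $x_B\cdot f = G(f)\circ x_B$ up to the relevant signs. Substituting into the condition $f\cdot x_A=(-1)^{ij}x_B\cdot f$ defining $\mathbf{L}^i$, I would check that this is exactly the condition $G(f)\circ \eta_A = (-1)^{ij}\eta_B\circ F(f)$ appearing in Definition \ref{dgnaturaltrans} (with $n=i$, $m=j$), after carefully tracking that the Koszul signs coming from $T(f^{op}\otimes 1)$ and from the factor $(-1)^{|x_A||f|}$ in the right action combine correctly. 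Hence an element of $\mathbf{L}^i$ is precisely a family $\{\eta_A\}$ with $\eta_A\in\mathcal{B}(F(A),G(A))^i$ satisfying the dg-naturality condition of degree $i$, i.e. an element of $\mathrm{DgNat}^i(F,G)$; summing over $i$ gives the equality of graded $K$-modules $\mathbf{L}=\mathrm{DgNat}(F,G)$.

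Next I would verify that the differential on $\mathbf{L}$ inherited from the product $\prod_{A}\mathcal{B}(F(A),G(A))$ — which in degree $i$ sends $x=(x_A)$ to $(d_{\mathcal{B}(F(A),G(A))}(x_A))_A$ — coincides with the differential on $\mathrm{DgNat}(F,G)$ given by Equation \eqref{morfismosdgmod}--\eqref{dgNatmod}, namely $d(\eta)_A = d_{\mathrm{Hom}(F(A),G(A))}(\eta_A)$. This is immediate from the componentwise definition of both differentials, once one knows the underlying graded modules are identified; the only thing to confirm is that $d$ preserves $\mathbf{L}$, which follows because $\mathbf{L}$ is asserted to be a subcomplex in Proposition \ref{descripendModK} (equivalently, one checks directly that $d(\eta)$ again satisfies the degree-$(i+1)$ dg-naturality condition, using the Leibniz-type rule \eqref{Homdgestructure} for $d$ on Hom-complexes of $\mathcal{B}$ together with the fact that $F$ and $G$ commute with differentials). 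Finally, under this identification the map $\lambda_A:\mathbf{L}\longrightarrow T(A,A)=\mathcal{B}(F(A),G(A))$ of Proposition \ref{descripendModK}, which is $x\mapsto x_A$, becomes exactly $\eta\mapsto\eta_A$, which is the claimed formula; and since the end is determined up to unique isomorphism by its universal property, $\mathrm{DgNat}(F,G)$ together with $\{\lambda_A\}$ is the end $\int_{A\in\mathcal{A}}T(A,A)$.

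The main obstacle I anticipate is purely bookkeeping: pinning down all the Koszul signs in $T(f^{op}\otimes g)$ so that the wedge condition $f\cdot x_A=(-1)^{ij}x_B\cdot f$ of Proposition \ref{descripendModK} matches the sign $(-1)^{nm}$ of Definition \ref{dgnaturaltrans} on the nose, rather than up to some spurious extra sign. The delicate points are the composition sign in $\mathcal{A}^{op}$, the sign $(-1)^{|f||j|}$ built into $\mathrm{Hom}_{\mathcal{A}^{op}}(f,A)$ in Example \ref{Hom(A,-) dg funtor}(b), and the extra $(-1)^{|x||f|}$ in the definition of the right $\mathcal{A}$-action. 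I would organize this by first writing $T=\mathcal{B}(F,G)$ explicitly as the composite $\mathcal{A}^{op}\otimes\mathcal{A}\xrightarrow{F^{op}\otimes G}\mathcal{B}^{op}\otimes\mathcal{B}\xrightarrow{\mathrm{Hom}_{\mathcal{B}}(-,-)}\mathrm{DgMod}(K)$ and reading off $T(f^{op}\otimes g)(h)=(-1)^{|f|(|g|+|h|)}G(g)\circ h\circ F(f)$ from the displayed formula for $\mathrm{Hom}_{\mathcal{A}}(-,-)$ in the excerpt, then substituting into Definition \ref{leftrightaction} and simplifying; all signs should cancel to leave precisely $G(f)\circ\eta_A=(-1)^{ij}\eta_B\circ F(f)$.
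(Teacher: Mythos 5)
Your proposal is correct and follows exactly the route the paper intends: the corollary is stated without proof as an immediate consequence of Proposition \ref{descripendModK}, and your verification — specializing the wedge condition $f\cdot x_{A}=(-1)^{ij}x_{B}\cdot f$ to $T=\mathcal{B}(F,G)$ and checking that the Koszul signs from $T(f^{op}\otimes 1)$ and the right-action convention cancel to give $G(f)\circ\eta_{A}=(-1)^{ij}\eta_{B}\circ F(f)$, then matching the componentwise differentials and the projections $\lambda_{A}$ — is precisely the omitted computation, carried out correctly.
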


Now, we state the following Proposition dual to Proposition \ref{descripendModK}.

\begin{proposition}\label{coendexistedg}
Let $T:\mathcal{A}^{op}\otimes \mathcal{A}\longrightarrow \mathrm{DgMod}(K)$ be a dg-functor. Then the coend $\int^{A \in \mathcal{A}}T(A,A)$ of  $T$  exists and  
$$\int^{A \in \mathcal{A}}T(A,A)\cong \mathrm{Coker} \Big(\theta:\coprod_{A_{1},A_{2}\in \mathcal{A}} \mathcal{A}(A_{2},A_{1})\otimes T(A_{1},A_{2})\longrightarrow \coprod_{A \in \mathcal{A}} T(A,A) \Big)$$ where $\theta(f \otimes x)=T( 1_{A_{1}} \otimes f)(x)-(-1)^{|f||x|}T(f \otimes 1_{A_{2}})(x)$ for homogeneous elements $f\in \mathcal{A}(A_{2},A_{1})$ and $x\in T(A_{1},A_{2})$. Moreover, for each $A\in \mathcal{A}$, the morphism $\mu_{A}: T(A,A)\longrightarrow \int^{A \in \mathcal{A}}T(A,A)$ is defined as the composition  of the inclusion $i_{T(A,A)}:T(A,A)\longrightarrow \coprod_{A} T(A,A)$ with $p: \coprod_{A\in \mathcal{A}} T(A,A) \longrightarrow \int^{A \in \mathcal{A}}T(A,A)$ where $p$ is the cokernel map of $\theta$.
\end{proposition}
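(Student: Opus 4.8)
The plan is to dualize, essentially line by line, the construction behind Proposition~\ref{descripendModK}, reading the defining property of the coend as that of an initial dinatural cowedge and exhibiting $\mathrm{Coker}(\theta)$, together with the maps $\mu_{A}$, as such an object. First I would spell out the universal property of the coend obtained by reversing all arrows in Definition~\ref{def dg end3}: a coend of $T$ is an object $E\in\mathrm{DgMod}(K)$ equipped with a family of closed degree zero maps $\{\mu_{A}\colon T(A,A)\to E\}_{A\in\mathcal{A}}$ (Definition~\ref{closezeromap}) which is \emph{dinatural}, i.e.\ for all homogeneous $f\in\mathcal{A}(A_{2},A_{1})$ and homogeneous $x\in T(A_{1},A_{2})$ one has $\mu_{A_{1}}\big(T(1_{A_{1}}\otimes f)(x)\big)=(-1)^{|f||x|}\,\mu_{A_{2}}\big(T(f\otimes 1_{A_{2}})(x)\big)$, and which is initial among such families: for every $Y\in\mathrm{DgMod}(K)$ with a dinatural family $\{\alpha_{A}\colon T(A,A)\to Y\}_{A\in\mathcal{A}}$ of closed degree zero maps there is a \emph{unique} closed degree zero map $\gamma\colon E\to Y$ with $\gamma\circ\mu_{A}=\alpha_{A}$ for all $A$. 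The candidate is $E:=\mathrm{Coker}(\theta)$ with $\mu_{A}:=p\circ i_{T(A,A)}$, exactly as in the statement.

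The proof then proceeds in four steps. First, one checks that $\theta$ is a morphism in $\mathrm{DgMod}(K)$, i.e.\ a $K$-linear map of degree $0$ commuting with the differentials. Writing $\theta=\theta^{+}-\theta^{-}$ with $\theta^{+}(f\otimes x)=T(1_{A_{1}}\otimes f)(x)$ and $\theta^{-}(f\otimes x)=(-1)^{|f||x|}T(f\otimes 1_{A_{2}})(x)$, degree $0$ is immediate since $T$ is a graded functor, and compatibility with differentials comes from chasing $d(f\otimes x)=d(f)\otimes x+(-1)^{|f|}f\otimes d(x)$ (Equation~\eqref{tendgestructure}) through the identities expressing that $T$ is a dg-functor (so $T(1\otimes -)$ and $T(-\otimes 1)$ commute with the differentials) together with the bifunctoriality of $T$; this is the dual of the verification that $\mathbf{L}$ in Proposition~\ref{descripendModK} is a subcomplex. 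Second, since the underlying ordinary category $\mathrm{DgMod}_{str}(K)$ of $\mathrm{DgMod}(K)$ is the cocomplete category of complexes of $K$-modules, the coproduct $\coprod_{A}T(A,A)$ and the cokernel $E=\mathrm{Coker}(\theta)$ exist in $\mathrm{DgMod}(K)$, and $p$ and the inclusions $i_{T(A,A)}$, hence the $\mu_{A}$, are closed degree zero maps. Third, dinaturality of $\{\mu_{A}\}$ is precisely the identity $p\circ\theta=0$, valid by definition of the cokernel; restricting $p\circ\theta$ to the summand indexed by $(A_{1},A_{2})$ and evaluating at $f\otimes x$ gives the displayed signed identity. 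Fourth, for the universal property: given a dinatural family $\{\alpha_{A}\colon T(A,A)\to Y\}$ of closed degree zero maps, the universal property of the coproduct yields a unique closed degree zero $\alpha\colon\coprod_{A}T(A,A)\to Y$ with $\alpha\circ i_{T(A,A)}=\alpha_{A}$; dinaturality of $\{\alpha_{A}\}$ says exactly $\alpha\circ\theta=0$, so $\alpha$ factors uniquely as $\alpha=\gamma\circ p$ through the cokernel, whence $\gamma\circ\mu_{A}=\alpha_{A}$, and $\gamma$ is unique because the $\mu_{A}$ are jointly epimorphic ($p$ being an epimorphism and the $i_{T(A,A)}$ jointly epimorphic). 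Thus $(E,\{\mu_{A}\})$ is an initial dinatural cowedge, so $\int^{A\in\mathcal{A}}T(A,A)\cong\mathrm{Coker}(\theta)$ with the asserted $\mu_{A}$, and in particular the coend exists.

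The only delicate point is the first step: keeping the Koszul signs from \eqref{tendgestructure}, from the differential on $T(A_{1},A_{2})$, and from the factor $(-1)^{|f||x|}$ mutually consistent when verifying that $\theta$ (equivalently $\theta^{+}$ and $\theta^{-}$) respects the differentials; everything else is formal bookkeeping with colimits in an additive category. Alternatively, one could bypass this computation entirely by appealing to the general fact (see \cite{Kelly}) that, $\mathrm{DgMod}(K)$ being cocomplete, every coend of a dg-functor $\mathcal{A}^{op}\otimes\mathcal{A}\to\mathrm{DgMod}(K)$ exists and is computed as the coequalizer of the pair $\theta^{+},\theta^{-}$, which in an additive category coincides with $\mathrm{Coker}(\theta^{+}-\theta^{-})=\mathrm{Coker}(\theta)$; I would then only record the explicit description of the maps $\mu_{A}$ as in the statement.
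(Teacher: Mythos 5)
Your proof is correct and is essentially the standard argument: the paper itself does not prove this proposition but simply cites Proposition~3.6 of \cite{Genovese1}, which carries out the same cokernel-of-$\theta$ construction dual to Proposition~\ref{descripendModK}. Your verification (that $\theta$ is a closed degree zero chain map, that $p\circ\theta=0$ encodes the signed dinaturality of the $\mu_{A}$, and that the cokernel's universal property combined with that of the coproduct gives initiality among closed degree zero dinatural cowedges in the sense dual to Definition~\ref{def dg end3}) supplies exactly the details the paper outsources to the reference, so there is nothing to add.
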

\begin{proof}
See Proposition 3.6 in p. 635 in \cite{Genovese1}
\end{proof}

\subsection{Weighted limits and colimits in dg-categories}\label{weightedlimi}

To compute some Kan extensions in the context of dg-categories, the ordinary limits defined in terms of the representability of cones are not enough. We require a broader notion of limit, known as a weighted limit. In this section we will continue working in the enriched context on $\mathcal{V}$ a symmetric and closed monoidal category, in particular, for $\mathcal{V}=(\textbf{DgMod}_{\textbf{Str}}(K),\otimes, a,l, r, K)$. We follow page 37 in \cite{Kelly}.

\begin{definition}
Let $W:\mathcal{A}\longrightarrow \mathrm{DgMod}(K)$ a dg-functor with $\mathcal{A}$ small we will say that $W$ is an $\textbf{indexing functor}$ or a $\textbf{weight}$. Given another dg-functor $F:\mathcal{A}\longrightarrow \mathcal{B}$ and having in mind the weight $W$ we will say that $F$ is a $\textbf{diagram in}$ $\mathcal{B}$ $\textbf{of type}$ $W$ or $F$ is $\textbf{weighted by}$ $W$.
\end{definition}
Just for having in mind all the information we will consider the diagram
$$\xymatrix{\mathcal{A}\ar[r]^{F}\ar@{-->}[d]_{W} & \mathcal{B}\\
\mathrm{DgMod}(K) & }$$
Recall that the dg-category of all  the covariant dg-functors from $\mathcal{A}$ to $\mathrm{DgMod}(K))$ is denoted by $\mathrm{DgMod}(\mathcal{A})$ (see Definition \ref{DfModC}).
Now, let $B\in \mathcal{B}$ and  consider the dg-functor $\mathcal{B}(B,-):\mathcal{B}\longrightarrow \mathrm{DgMod}(K)$. For $F:\mathcal{A}\longrightarrow \mathcal{B}$ a dg-functor we have the dg-functor
$$\mathcal{B}(B,F(-)):=\mathcal{B}(B,-)\circ F:\mathcal{A}\longrightarrow \mathrm{DgMod}(K).$$
In the following definition we will use the notation given in \cite[Definition 7.1.1]{Emily1} on page 100. 

\begin{definition}\label{weightedlim}
Let $F:\mathcal{A}\longrightarrow \mathcal{B}$ be a dg-functor. A $\textbf{limit of}$ $F:\mathcal{A}\longrightarrow \mathcal{B}$ $\textbf{weighted by}$ $W:\mathcal{A}\longrightarrow  \mathrm{DgMod}(K)$ is an object $\mathrm{lim}^{W}F\in \mathcal{B}$ and a natural isomorphism in  $\mathrm{DgMod}_{Str}(K)$:
$$\mathcal{B}\Big(B,\mathrm{lim}^{W}F\Big)\simeq \mathrm{DgNat}\Big(W,\mathcal{B}(B,F(-)\Big)$$
for all $B\in \mathcal{B}$.
\end{definition}
We note that in page 37 in \cite{Kelly}, the author uses the notation $\{W,F\}$ for $\mathrm{lim}^{W}F$. The following Proposition help us to compute weighted limits in a very special case.

\begin{proposition}\label{limcasofacil}
Let $F, W:\mathcal{A}\longrightarrow \mathrm{DgMod}(K)$ be dg-functors. Hence
$$\mathrm{lim}^{W}F=\{W,F\}=\mathrm{DgNat}(W,F).$$
\end{proposition}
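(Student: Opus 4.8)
The plan is to apply the definition of weighted limit (Definition \ref{weightedlim}) directly to the case $\mathcal{B} = \mathrm{DgMod}(K)$, and verify that the object $\mathrm{DgNat}(W,F)$ together with the appropriate natural isomorphism satisfies the defining universal property. So first I would observe that by Definition \ref{weightedlim}, to show $\mathrm{lim}^{W}F \cong \mathrm{DgNat}(W,F)$ it suffices to produce a natural isomorphism in $\mathrm{DgMod}_{Str}(K)$
$$\mathrm{DgMod}(K)\Big(B,\mathrm{DgNat}(W,F)\Big)\simeq \mathrm{DgNat}\Big(W,\mathrm{DgMod}(K)(B,F(-))\Big)$$
for all $B \in \mathrm{DgMod}(K)$, i.e.\ for all dg $K$-modules $B$.

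The key step is the following: since $\mathrm{DgMod}(K)$ is closed symmetric monoidal with internal hom $\Ho_{\mathrm{DgMod}(K)}(-,-)$, there is a natural isomorphism $\mathrm{DgMod}(K)(B,\Ho_{\mathrm{DgMod}(K)}(W(A),F(A))) \cong \Ho_{\mathrm{DgMod}(K)}(B\otimes W(A), F(A)) \cong \mathrm{DgMod}(K)(W(A),\Ho_{\mathrm{DgMod}(K)}(B,F(A)))$ — more precisely, the hom-tensor adjunction together with the fact that $B$ is a "constant" argument. Using Corollary \ref{construcciondgnat}, both $\mathrm{DgNat}(W,F)$ and $\mathrm{DgNat}(W,\mathrm{DgMod}(K)(B,F(-)))$ are ends: $\mathrm{DgNat}(W,F) = \int_{A}\Ho_{\mathrm{DgMod}(K)}(W(A),F(A))$ and likewise for the second. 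I would then use Proposition \ref{descripendModK} to describe both ends concretely as subcomplexes of products, and the fact that the functor $\mathrm{DgMod}(K)(B,-) = \Ho_{\mathrm{DgMod}(K)}(B,-)$ preserves products and (being a right adjoint in the enriched sense, or just by direct inspection of the equalizer description) preserves the defining compatibility condition $f\cdot x_A = (-1)^{ij}x_B\cdot f$, hence preserves this particular end. This yields
$$\mathrm{DgMod}(K)\Big(B,\int_{A}\Ho_{\mathrm{DgMod}(K)}(W(A),F(A))\Big)\cong \int_{A}\mathrm{DgMod}(K)\Big(B,\Ho_{\mathrm{DgMod}(K)}(W(A),F(A))\Big),$$
and combining with the adjunction isomorphism applied inside the end gives the desired identification with $\int_{A}\Ho_{\mathrm{DgMod}(K)}(W(A),\Ho_{\mathrm{DgMod}(K)}(B,F(A))) = \mathrm{DgNat}(W,\mathrm{DgMod}(K)(B,F(-)))$.

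Finally I would check naturality in $B$, which is routine since every isomorphism used (the hom-tensor adjunction, preservation of ends) is natural in all arguments. The main obstacle I expect is bookkeeping of the Koszul signs: one must confirm that the adjunction isomorphism $\Ho_{\mathrm{DgMod}(K)}(B\otimes W(A),F(A))\cong \Ho_{\mathrm{DgMod}(K)}(W(A),\Ho_{\mathrm{DgMod}(K)}(B,F(A)))$ is compatible with the right $\mathcal{A}$-actions of Definition \ref{leftrightaction} on both sides, so that the subcomplex cut out by the end condition on one side maps precisely onto the subcomplex on the other. This is a direct but sign-sensitive computation, and is really the only content beyond invoking the closed monoidal structure and the concrete description of ends in Proposition \ref{descripendModK}; everything else is an application of the definitions. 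Alternatively, and perhaps more cleanly, one can cite that in any closed monoidal $\mathcal{V}$ the weighted limit of a $\mathcal{V}$-functor into $\mathcal{V}$ itself is given by the end $\int_A \Ho_{\mathcal{V}}(W(A),F(A))$ (this is \cite[(3.1)]{Kelly} or the corresponding statement in \cite{Emily1}), and then identify that end with $\mathrm{DgNat}(W,F)$ via Corollary \ref{construcciondgnat}.
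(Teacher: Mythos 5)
Your proposal is correct and follows essentially the same route as the paper: both arguments reduce the claim to the chain of natural isomorphisms $[B,\mathrm{lim}^{W}F]\cong\mathrm{DgNat}(W,[B,F(-)])\cong\int_{A}[W(A),[B,F(A)]]\cong\int_{A}[B,[W(A),F(A)]]\cong[B,\int_{A}[W(A),F(A)]]$ via the end description of $\mathrm{DgNat}$ (Corollary \ref{construcciondgnat}), the hom-tensor adjunction with symmetry, and the fact that $[B,-]$ preserves ends, concluding by Yoneda. The only difference is presentational: you verify the universal property for the candidate object $\mathrm{DgNat}(W,F)$ directly, whereas the paper starts from $[B,\mathrm{lim}^{W}F]$ and unwinds to the same end.
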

\begin{proof}
For simplicity we use the following notation  for the functor $\mathrm{Hom}$:
$$[-,-]=\mathrm{Hom}_{\mathrm{DgMod}(K)}(-,-):\mathrm{DgMod}(K)^{op}\otimes \mathrm{DgMod}(K)\longrightarrow \mathrm{DgMod}(K).$$
By formula 1.27 on page 14 in Kelly's book \cite{Kelly}, we have an isomorphism:
$$\Big[L\otimes M,N\Big]\simeq \Big[L, [M,N]\Big].$$
We have that $\mathrm{lim}^{W}F\in \mathrm{DgMod}(K)$ satisfies that for each $B\in \mathrm{DgMod}(K)$ the following:
\begin{align*}
[B,\mathrm{lim}^{W}F] & \cong \mathrm{DgNat}\Big(W,[B,F(-)]\Big) & \text{(definition of weighted limit)}\\
& \cong \int_{A \in \mathcal{A}}\Big[W(A),[B,F(A)]\Big]&\text{( by Corollary \ref{construcciondgnat})}\\
& \cong  \int_{A \in \mathcal{A}}\Big[W(A)\otimes B,F(A)\Big]&\text{(adjointnes)}\\
& \cong \int_{A \in \mathcal{A}}\Big[B\otimes W(A),F(A)\Big] &\text{(symmetry)}\\
& \cong \int_{A \in \mathcal{A}}\Big[B, [W(A),F(A)]\Big]  & \text{(adjointnes)}\\ 
& \cong \Big[B, \int_{A \in \mathcal{A}}[W(A),F(A)]\Big] & [B,-]\,\,\, \text{(preserve ends)}  
\end{align*}  
The fact that $[B,-]$ preserve ends, can be checked in formula 2.3 on page 28 in\cite{Kelly}. Hence, by Yoneda's Lemma  and  Corollary \ref{construcciondgnat}, we get that $\mathrm{lim}^{W}F \simeq \int_{A \in \mathcal{A}}[W(A),F(A)]=\mathrm{DgNat}(W,F)$.
\end{proof}

Now, we can discuss the notion of weighted colimit. Let $W:\mathcal{A}^{op}\longrightarrow \mathrm{DgMod}(K)$ a dg-functor with $\mathcal{A}$ small. Consider another dg-functor $F:\mathcal{A}\longrightarrow \mathcal{B}$ and for $B\in \mathcal{B}$ we construct the dg-functor $\mathcal{B}(-,B):\mathcal{B}^{op}\longrightarrow \mathrm{DgMod}(K)$.  Hence, for $F:\mathcal{A}\longrightarrow \mathcal{B}$ a dg-functor we have the dg-functor
$$\mathcal{B}(F(-),B):=\mathcal{B}(-,B)\circ F^{op}:\mathcal{A}^{op}\longrightarrow \mathrm{DgMod}(K).$$

In the following definition we will use the notation given in \cite[Definition 7.2.1]{Emily1} on page 105. 

\begin{definition}\label{weightedcolim}
Let $F:\mathcal{A}\longrightarrow \mathcal{B}$ be a dg-functor. A $\textbf{colimit of}$ $F:\mathcal{A}\longrightarrow \mathcal{B}$ $\textbf{weighted by}$ $W:\mathcal{A}^{op}\longrightarrow  \mathrm{DgMod}(K)$ is an object $\mathrm{colim}^{W}F\in \mathcal{B}$ and an natural isomorphism in  $\mathrm{DgMod}_{Str}(K)$:
$$\mathcal{B}\Big(\mathrm{colim}^{W}F,B\Big)\simeq \mathrm{DgNat}\Big(W,\mathcal{B}(F(-),B)\Big)$$
 for all $B\in \mathcal{B}$ (the right hand side means the dg-natural transformations in the category $\mathrm{DgMod}(\mathcal{A}^{op})=\mathrm{DgFun}\Big(\mathcal{A}^{op},\mathrm{DgMod}(K)\Big)$).
\end{definition}

The following example give us an easy way to compute weighted colimits in special cases.

\begin{example}\label{coyoneda}
Let $F:\mathcal{A}\longrightarrow \mathcal{B}$ a dg-functor and $W:=\mathrm{Hom}_{\mathcal{A}}(-,A):\mathcal{A}\longrightarrow \mathrm{DgMod}(K)$. Hence,
\begin{align*}
\mathcal{B}(\mathrm{colim}^{W}F,B)\simeq \mathrm{DgNat}\Big(W, \mathcal{B}(F(-),B)\Big) & =\mathrm{DgNat}\Big(\mathrm{Hom}_{\mathcal{A}}(-,A), \mathcal{B}(F(-),B)\Big)\\
& \simeq \mathcal{B}(F(A),B)\quad [\text{dg-Yoneda's Lema}]
\end{align*}
Hence by Yoneda's Lemma we have that $\mathrm{colim}^{W}F=\mathrm{colim}^{\mathrm{Hom}_{\mathcal{A}}(-,A)}F\simeq F(A).$
\end{example}

\begin{Remark}\label{prd2func}
Now, let us consider $F:\mathcal{A}^{op}\longrightarrow \mathrm{DgMod}(K)$  and $G:\mathcal{A}\longrightarrow \mathrm{DgMod}(K)$ dg-functors. Since we have the dg-functor
 $-\otimes -:\mathrm{DgMod}(K)\otimes \mathrm{DgMod}(K)\longrightarrow \mathrm{DgMod}(K),$ we can define a dg-functor
 $$T:\mathcal{A}^{op}\otimes \mathcal{A}\longrightarrow \mathrm{DgMod}(K)$$
 as follows: $T(A,A')=F(A)\otimes G(A')$ for all $A,A'\in \mathcal{A}$. By notation we define $F\otimes G$ as this bifunctor, that is,
 $F\otimes G:=T$. By Proposition \ref{coendexistedg}, we can compute the coend of this bifunctor: $\int^{A\in \mathcal{A}}F(A)\otimes G(A).$
 \end{Remark}
 
 Hence, we have the following definition.

 \begin{definition}\label{tensorprodascoend}
 Let $F:\mathcal{A}^{op}\longrightarrow \mathrm{DgMod}(K)$  and $G:\mathcal{A}\longrightarrow \mathrm{DgMod}(K)$ dg-functors.
 We define 
 $$F\otimes_{\mathcal{A}}G:=\int^{A\in \mathcal{A}}F(A)\otimes G(A)\in \mathrm{DgMod}(K).$$
 Note that we have the index $\mathcal{A}$ in the above tensor product in order to make a difference between $F\otimes_{\mathcal{A}}G$ and  the functor  $F\otimes G:=T$ defined in the Remark \ref{prd2func}.
 \end{definition}
 
 The following Proposition tell us how to compute weighted colimits in a special case.
 
 \begin{proposition}\label{tesorcasofacil}
  Let $F:\mathcal{A}^{op}\longrightarrow \mathrm{DgMod}(K)$  and $G:\mathcal{A}\longrightarrow \mathrm{DgMod}(K)$ dg functors. Then
  $\mathrm{colim}^{F}G$ exists and 
  $$\mathrm{colim}^{F}G\simeq F\otimes_{\mathcal{A}}G.$$
 \end{proposition}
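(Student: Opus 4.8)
The plan is to verify the defining universal property of $\mathrm{colim}^{F}G$ from Definition \ref{weightedcolim} directly, using $F\otimes_{\mathcal{A}}G$ as a candidate. That is, for an arbitrary $B\in\mathrm{DgMod}(K)$ I will produce a natural isomorphism in $\mathrm{DgMod}_{Str}(K)$
$$\mathrm{Hom}_{\mathrm{DgMod}(K)}\Big(F\otimes_{\mathcal{A}}G,\,B\Big)\simeq \mathrm{DgNat}\Big(F,\,\mathrm{Hom}_{\mathrm{DgMod}(K)}(G(-),B)\Big),$$
where on the right the dg-natural transformations are taken in $\mathrm{DgMod}(\mathcal{A}^{op})$, and then invoke Definition \ref{weightedcolim} (and, if needed, the uniqueness of objects representing a given functor) to conclude $\mathrm{colim}^{F}G\simeq F\otimes_{\mathcal{A}}G$, which in particular gives its existence since the coend defining $F\otimes_{\mathcal{A}}G$ exists by Proposition \ref{coendexistedg}.

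The computation runs as a chain of isomorphisms parallel to the one used in the proof of Proposition \ref{limcasofacil}, but with a coend in place of an end. Writing $[-,-]=\mathrm{Hom}_{\mathrm{DgMod}(K)}(-,-)$ for brevity, I would start from $\big[F\otimes_{\mathcal{A}}G,\,B\big]=\big[\int^{A\in\mathcal{A}}F(A)\otimes G(A),\,B\big]$ by Definition \ref{tensorprodascoend}. The key input is that the representable $[-,B]$ turns a coend into an end: $\big[\int^{A}F(A)\otimes G(A),\,B\big]\simeq \int_{A\in\mathcal{A}}\big[F(A)\otimes G(A),\,B\big]$, which is the coend analogue of the end-preservation property cited from formula 2.3 on page 28 of \cite{Kelly} (equivalently, $[-,B]\colon \mathrm{DgMod}(K)^{op}\to\mathrm{DgMod}(K)$ sends colimits to limits, hence coends to ends). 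Then by the tensor-hom adjunction isomorphism $[L\otimes M,N]\simeq[L,[M,N]]$ already recalled in the proof of Proposition \ref{limcasofacil}, one has $\big[F(A)\otimes G(A),B\big]\simeq\big[F(A),[G(A),B]\big]$ naturally in $A$, so
$$\big[F\otimes_{\mathcal{A}}G,B\big]\simeq\int_{A\in\mathcal{A}}\big[F(A),[G(A),B]\big].$$
Finally, by Corollary \ref{construcciondgnat} applied to the two dg-functors $F,\mathrm{Hom}_{\mathrm{DgMod}(K)}(G(-),B)\colon\mathcal{A}^{op}\to\mathrm{DgMod}(K)$, this end is exactly $\mathrm{DgNat}\big(F,\mathrm{Hom}_{\mathrm{DgMod}(K)}(G(-),B)\big)$, which is the right-hand side of the weighted-colimit defining isomorphism.

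The one point requiring care, and the main obstacle, is the naturality in $B$ of the composite isomorphism together with the verification that it lands in $\mathrm{DgMod}_{Str}(K)$, i.e. that each step is not merely an isomorphism of graded $K$-modules but a morphism of degree zero commuting with differentials; this is where the signs in Definition \ref{leftrightaction}, in the differential \eqref{Homdgestructure}, and in the coend presentation of Proposition \ref{coendexistedg} must be tracked, exactly as in the enriched-category arguments of \cite{Kelly} and \cite{Emily1}. A secondary subtlety is that the end-preservation / colimit-continuity of $[-,B]$ is stated in \cite{Kelly} for the contravariant hom on one side; I would either cite the dual of formula 2.3 in \cite{Kelly} directly or note that it follows from the explicit description of coends in Proposition \ref{coendexistedg} as a cokernel, since $[-,B]$ (being a right adjoint in the appropriate variable) sends the defining cokernel diagram of $\int^{A}F(A)\otimes G(A)$ to the kernel/end diagram of $\int_{A}[F(A),[G(A),B]]$ described in Proposition \ref{descripendModK}. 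Once naturality and the $\mathrm{DgMod}_{Str}(K)$-refinement are in hand, Definition \ref{weightedcolim} gives the claim.
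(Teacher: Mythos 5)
Your proposal is correct and follows essentially the same route as the paper's proof: the same chain of isomorphisms (Corollary \ref{construcciondgnat}, the tensor--hom adjunction $[L\otimes M,N]\simeq[L,[M,N]]$, and the fact that $[-,B]$ turns the coend into an end, dual to formula 2.3 in \cite{Kelly}), merely run in the opposite direction, ending with the representability argument via Definition \ref{weightedcolim}. Your explicit attention to naturality in $B$ and to the $\mathrm{DgMod}_{Str}(K)$-refinement is a point the paper leaves implicit, but it does not change the argument.
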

 \begin{proof}
 First recall that we have $[-,-]:=\mathrm{Hom}_{ \mathrm{DgMod}(K)}(-,-): \mathrm{DgMod}(K)^{op}\otimes  \mathrm{DgMod}(K)\longrightarrow \mathrm{DgMod}(K).$ For $B\in \mathrm{DgMod}(K)$ we have that
 \begin{align*}
\mathrm{DgNat}\Big(F, [G(-),B]\Big)& \simeq  \int_{A\in \mathcal{A}}[F(A),[G(A),B]] & \text{(by Corollary \ref{construcciondgnat})}\\
 & \simeq  \int_{A\in \mathcal{A}}[F(A)\otimes G(A),B] & \text{(simmetry)}\\
 & \simeq \Big[\int^{A\in \mathcal{A}}F(A)\otimes G(A),B\Big] & ([-,B]\,\, \text{change ends for coends})\\
 & =\Big[F\otimes_{\mathcal{A}}G,B]. & (\text{Definition}\,\, \ref{tensorprodascoend})
 \end{align*}
The fact that $[-,B]$ change ends for coends is dual to the formula 2.3 on page 28 in  \cite{Kelly}. Recall that if $\mathrm{colim}^{F}G\in \mathrm{DgMod}(K)$ exists, it must satisfy that $[\mathrm{colim}^{F}G, B]\simeq \mathrm{DgNat}\Big(F, [G(-),B]\Big)$ for all $B\in \mathcal{B}$. Since $F\otimes_{\mathcal{A}}G$ exists by Proposition \ref{coendexistedg}, we conclude that $\mathrm{colim}^{F}G=F\otimes_{\mathcal{A}}G.$
 \end{proof}
 
\begin{corollary}\label{tensorhom}
For each $A\in \mathcal{A}$ and $G:\mathcal{A}\longrightarrow \mathrm{DgMod}(K)$ we have that
$$\mathrm{Hom}_{\mathcal{A}}(-,A)\otimes_{\mathcal{A}}G\simeq G(A).$$
\end{corollary}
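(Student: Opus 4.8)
The plan is to specialize Proposition \ref{tesorcasofacil} to the weight $F=\mathrm{Hom}_{\mathcal{A}}(-,A)$. Concretely, take $F:=\mathrm{Hom}_{\mathcal{A}}(-,A):\mathcal{A}^{op}\longrightarrow \mathrm{DgMod}(K)$, which is a dg-functor by Example \ref{Hom(A,-) dg funtor}(b), and let $G:\mathcal{A}\longrightarrow \mathrm{DgMod}(K)$ be the given dg-functor. Then Proposition \ref{tesorcasofacil} gives $\mathrm{colim}^{F}G\simeq F\otimes_{\mathcal{A}}G=\mathrm{Hom}_{\mathcal{A}}(-,A)\otimes_{\mathcal{A}}G$, so it suffices to identify $\mathrm{colim}^{\mathrm{Hom}_{\mathcal{A}}(-,A)}G$ with $G(A)$.

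For that identification I would invoke Example \ref{coyoneda}, which computes precisely this weighted colimit: for any dg-functor $H:\mathcal{A}\longrightarrow \mathcal{B}$ and weight $\mathrm{Hom}_{\mathcal{A}}(-,A)$ one has $\mathrm{colim}^{\mathrm{Hom}_{\mathcal{A}}(-,A)}H\simeq H(A)$, via the chain $\mathcal{B}(\mathrm{colim}^{W}H,B)\simeq \mathrm{DgNat}(\mathrm{Hom}_{\mathcal{A}}(-,A),\mathcal{B}(H(-),B))\simeq \mathcal{B}(H(A),B)$ using the dg-Yoneda Lemma, followed by Yoneda once more to conclude. Applying this with $\mathcal{B}=\mathrm{DgMod}(K)$ and $H=G$ yields $\mathrm{colim}^{\mathrm{Hom}_{\mathcal{A}}(-,A)}G\simeq G(A)$. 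Combining the two displayed isomorphisms gives $\mathrm{Hom}_{\mathcal{A}}(-,A)\otimes_{\mathcal{A}}G\simeq G(A)$, which is the claim. Naturality in $A$ and $G$ follows from the naturality of the isomorphisms in Example \ref{coyoneda} and Proposition \ref{tesorcasofacil}, though the statement as phrased only asserts the isomorphism object-wise.

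There is really no serious obstacle here: the corollary is a formal consequence of two results already established in the excerpt, and the only thing to be careful about is that the hypotheses of Proposition \ref{tesorcasofacil} are met, namely that $\mathrm{Hom}_{\mathcal{A}}(-,A)$ is genuinely a dg-functor $\mathcal{A}^{op}\longrightarrow \mathrm{DgMod}(K)$ and that $G$ is covariant; both are immediate. If one wanted a self-contained argument instead of citing Example \ref{coyoneda}, the mild point worth spelling out is the co-Yoneda computation of $\mathrm{DgNat}(\mathrm{Hom}_{\mathcal{A}}(-,A),-)$ over $\mathcal{A}^{op}$, i.e. applying the dg-Yoneda Lemma (Proposition, dg Yoneda's Lemma) to the category $\mathrm{DgMod}(\mathcal{A}^{op})$ rather than $\mathrm{DgMod}(\mathcal{A})$; but since Example \ref{coyoneda} already records exactly this, the cleanest writeup just chains Proposition \ref{tesorcasofacil} with Example \ref{coyoneda}.
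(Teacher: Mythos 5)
Your proof is correct and follows exactly the paper's route: the paper's own proof of this corollary is literally "It follows from Example \ref{coyoneda} and Proposition \ref{tesorcasofacil}," which is precisely the chain you spell out. Your extra care in checking that $\mathrm{Hom}_{\mathcal{A}}(-,A)$ is a dg-functor $\mathcal{A}^{op}\to\mathrm{DgMod}(K)$ is a harmless elaboration of the same argument.
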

\begin{proof}
It follows from Example \ref{coyoneda} and Proposition \ref{tesorcasofacil}.
\end{proof}

\subsection{Right and Left Kan extensions}\label{Kanexten}

Now, we will define the notions of right and left Kan extensions for dg-categories. Here, we will follow  Chapter 4 in \cite{Kelly}. First, we have the following definition.

\begin{definition}
Let $F,G:\mathcal{A}\longrightarrow \mathcal{B}$ dg functors and consider $\psi\in \mathrm{DgNat}(F,G)$ (see Definition \ref{dgnaturaltrans}). We say that $\psi$ is a $\textbf{closed and degree zero dg-natural}$ $\textbf{transformation}$ if  for each component $\psi_{A}$ of $\psi$ we have $\psi_{A}\in \mathcal{B}(F(A),G(A))^{0}$, $d_{\mathcal{B}(F(A),G(A))}(\psi_{A})=0$ and for all $f\in \mathcal{A}(A,A')$ the following diagram commutes
$$\xymatrix{F(A)\ar[r]^{\psi_{A}}\ar[d]_{F(f)} & G(A)\ar[d]^{G(f)}\\
F(A')\ar[r]_{\psi_{A'}} & G(A').}$$
That is, $\psi$ is a usual natural transformation whose components are  closed degree zero maps (see Definition \ref{closezeromap}).
\end{definition}

In the following discussion , we will use the context given in the following Remark.

\begin{Remark}\label{condicionesKan}
Let $F:\mathcal{A}\longrightarrow \mathcal{C}$, $W:\mathcal{C}\longrightarrow \mathrm{DgMod}(K)$,  $T:\mathcal{C}\longrightarrow \mathcal{B}$ and $G:\mathcal{A}\longrightarrow \mathcal{B}$ dg-functors and we consider the following diagram

\begin{equation}\label{Kan diagram1}
\begin{tikzcd}[ampersand replacement=\&]
	\& {\mathcal{C}} \\
	{\mathcal{A}} \&\& {\mathcal{B}}
	\arrow["F", from=2-1, to=1-2]
	\arrow["T", from=1-2, to=2-3]
	\arrow[""{name=0, anchor=center, inner sep=0}, "G"', from=2-1, to=2-3]
	\arrow["\psi"', shorten <=3pt, shorten >=5pt, Rightarrow, from=1-2, to=0]
\end{tikzcd}
\end{equation}
 where $\psi$ is a closed and degree zero dg-natural transformation in $\mathrm{DgNat}(TF,G)$.
\end{Remark}

Consider dg-functors $F:\mathcal{A} \longrightarrow \mathcal{C}$, $T:\mathcal{C} \longrightarrow \mathcal{B}$, $G:\mathcal{A}\longrightarrow \mathcal{B}$  and dg-natural transformation  $\psi:TF \longrightarrow G$ as in the Remark \ref{condicionesKan}.
For each dg-funtor $W: \mathcal{C} \longrightarrow \mathrm{DgMod}(K)$, there exists the following morphisms
$$ \resizebox{1 \textwidth}{!}{\xymatrix{\mathrm{DgNat}\Big(W, \mathcal{B}(B,T(-))\Big)\ar[r]^{\gamma} &   \mathrm{DgNat}\Big(WF, \mathcal{B}(B,TF(-))\Big)\ar[r]^{\Delta} & \mathrm{DgNat}\Big(WF, \mathcal{B}(B,G(-))\Big)}}$$
Since, $\mathrm{DgNat}\Big(W, \mathcal{B}(B,T(-))\Big)\simeq \mathcal{B}\Big(B,\mathrm{lim}^{W}T\Big)$, and $\mathrm{DgNat}\Big(WF, \mathcal{B}(B,G(-))\Big)\simeq \mathcal{B}\Big(B,\mathrm{lim}^{WF}G\Big)$, we have $\Delta\circ \gamma$ induces a morphism $(F,\psi)_{\ast}:\mathrm{lim}^{W}T\longrightarrow \mathrm{lim}^{WF}{G}.$

 \begin{proposition}$\textnormal{\cite[Theorem 4.6]{Kelly}}$ \label{equivakanext}
 Consider a diagram as in Remark \ref{condicionesKan}. The following conditions are equivalent.
\begin{enumerate}
\item [(a)] For each $W:\mathcal{C}\longrightarrow \mathrm{DgMod}(K)$ the morphism
$$(F,\psi)_{\ast}:\mathrm{lim}^{W}T\longrightarrow \mathrm{lim}^{WF}{G}$$
is an isomorphism,  either limit existing if the other does.

\item [(b)] For each $C\in \mathcal{C}$ we have that $\mathrm{lim}^{\mathcal{C}(C,F(-))}G=T(C)$ with counit given by
$$(\ast): \xymatrix{\mathcal{C}(C,F(-))\ar[d]^{T}\ar[drr]^{\mu_{C}} &\\
 \mathcal{B}(T(C),TF(-))\ar[rr]^{\mathcal{B}(1,\psi_{-})}  & & \mathcal{B}(T(C),G(-)).}$$
That is, for each $A\in \mathcal{A}$ we have that
$$[\mu_{C}]_{A}:\mathcal{C}(C,F(A))\longrightarrow  \mathcal{B}(T(C),G(A))$$ is defined as follows: for $f:C\longrightarrow F(A)$ we set
$$[\mu_{C}]_{A}(f):= \mathcal{B}(1,\psi_{-})_{A}(T(f))=\psi_{A}\circ T(f)\in  \mathcal{B}(T(C),G(A)).$$

\item [(c)] For each $B\in \mathcal{B}$ and $C\in \mathcal{C}$ the map induced by $(\ast)$ in item (b), 
$$\Gamma_{B}:\mathcal{B}(B,T(C))\longrightarrow \mathrm{DgNat}\Big(\mathcal{C}(C,F(-)),\mathcal{B}(B, G(-))\Big)$$
is an isomorphism. We note that for $\beta:B\longrightarrow T(C)$ we have $\mathcal{B}(\beta,G(-)):\mathcal{B}(T(C),G(-))\longrightarrow \mathcal{B}(B,G(-))$. Hence, 
$$\Gamma_{B}(\beta)=\mathcal{B}(\beta,G(-))\circ \mu_{C}.$$
\end{enumerate}
 \end{proposition}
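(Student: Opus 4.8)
The statement is a dg-enriched analogue of Kelly's characterization of pointwise left Kan extensions (\cite[Theorem 4.6]{Kelly}), so the plan is to reduce the three conditions to manipulations of weighted limits that have already been established in Subsections \ref{weightedlimi} and in Corollary \ref{construcciondgnat}. I would organize the proof as a cycle of implications $(a)\Rightarrow(b)\Rightarrow(c)\Rightarrow(a)$, or, perhaps more cleanly, show that each of $(a)$ and $(b)$ is equivalent to $(c)$ by rewriting everything through the defining isomorphism of a weighted limit.

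For $(b)\Leftrightarrow(c)$: by Proposition \ref{limcasofacil} the weighted limit $\mathrm{lim}^{\mathcal{C}(C,F(-))}G$, if it exists, is the object of $\mathcal{B}$ representing the dg-functor $B\mapsto \mathrm{DgNat}\big(\mathcal{C}(C,F(-)),\mathcal{B}(B,G(-))\big)$, with the representing isomorphism natural in $B$. So saying $\mathrm{lim}^{\mathcal{C}(C,F(-))}G=T(C)$ with the specified counit $(\ast)$ is, by the dg-Yoneda Lemma (Proposition \ref{dgYoneda}), exactly the assertion that the map $\Gamma_{B}$ induced by $(\ast)$ is an isomorphism for every $B$; one only has to check that the canonical representing map associated to the object $T(C)$ and the cone $(\ast)$ is precisely $\Gamma_{B}(\beta)=\mathcal{B}(\beta,G(-))\circ\mu_{C}$, which is a direct unwinding of how a weighted limit cone induces maps out of $\mathcal{B}(B,T(C))$.

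For $(a)\Leftrightarrow(b)$: this is where the machinery of the induced morphism $(F,\psi)_{\ast}:\mathrm{lim}^{W}T\to\mathrm{lim}^{WF}G$ does the work. Using Proposition \ref{limcasofacil} to identify $\mathrm{lim}^{W}T$ with the representing object of $B\mapsto\mathrm{DgNat}(W,\mathcal{B}(B,T(-)))$ and $\mathrm{lim}^{WF}G$ with that of $B\mapsto\mathrm{DgNat}(WF,\mathcal{B}(B,G(-)))$, the morphism $(F,\psi)_{\ast}$ is an isomorphism for all $W$ iff the composite $\Delta\circ\gamma$ is an isomorphism for all $W$ and all $B$. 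Now take $W=\mathcal{C}(C,-)$: by the dg-Yoneda Lemma $\mathrm{DgNat}(\mathcal{C}(C,-),\mathcal{B}(B,T(-)))\simeq\mathcal{B}(B,T(C))$ and $\mathcal{C}(C,-)\circ F=\mathcal{C}(C,F(-))$, so this special case of $\Delta\circ\gamma$ is exactly the map $\Gamma_{B}$ of $(c)$; this gives $(a)\Rightarrow(c)$. Conversely, for general $W$ one writes $W$ as a weighted colimit of representables — concretely, via the co-Yoneda isomorphism $W\simeq\int^{C\in\mathcal{C}}W(C)\otimes\mathcal{C}(C,-)$ packaged in Corollary \ref{tensorhom} and Proposition \ref{tesorcasofacil} — and uses that $\mathrm{DgNat}(-,\mathcal{B}(B,T(-)))$ and $\mathrm{DgNat}(-,\mathcal{B}(B,G(-)))$ both send these weighted colimits in the first variable to the corresponding weighted limits (ends), so that the general $\Delta\circ\gamma$ is assembled from the representable case by a limit over $C$ weighted by $W$. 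Since a weighted limit of isomorphisms is an isomorphism, $(c)$ (equivalently $(b)$) for all $C$ forces $(a)$.

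The main obstacle I expect is the bookkeeping in the last step: verifying that the comparison map $(F,\psi)_{\ast}$ really is natural in $W$ and compatible with the co-Yoneda presentation of $W$, i.e. that forming $\Delta\circ\gamma$ commutes with the weighted-colimit-in-$W$ decomposition, including getting all the Koszul signs from Definition \ref{leftrightaction} and the dg-natural transformation conventions of Definition \ref{dgnaturaltrans} to line up. This is exactly the place where Kelly's enriched argument has to be transported faithfully to $\mathcal{V}=\mathbf{DgMod}_{\mathbf{Str}}(K)$; everything else is a formal consequence of the dg-Yoneda Lemma and the explicit description of ends and coends in $\mathrm{DgMod}(K)$ given in Propositions \ref{descripendModK} and \ref{coendexistedg}. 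I would therefore state the sign conventions once at the start and then cite \cite[Theorem 4.6]{Kelly} for the structure of the argument, filling in only the $\mathcal{V}$-specific identifications.
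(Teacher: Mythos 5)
Your sketch is correct and is essentially the argument the paper relies on: the paper gives no proof at all for Proposition \ref{equivakanext}, simply quoting \cite[Theorem 4.6]{Kelly}, and your reduction (representable weights via dg-Yoneda for $(a)\Rightarrow(b)\Leftrightarrow(c)$, co-Yoneda decomposition of a general weight $W$ plus the end/coend calculus for the converse) is precisely Kelly's proof transported to $\mathcal{V}=\mathbf{DgMod}_{\textbf{Str}}(K)$. The only nitpick is that the representing property of $\mathrm{lim}^{\mathcal{C}(C,F(-))}G$ you use in $(b)\Leftrightarrow(c)$ is Definition \ref{weightedlim} itself rather than Proposition \ref{limcasofacil}, which only treats the case $\mathcal{B}=\mathrm{DgMod}(K)$.
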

 
The following Definition can be found on page 60 in \cite{Kelly}.

\begin{definition}
Consider the diagram as given in Remark \ref{condicionesKan}.
If the equivalent conditions of the Proposition \ref{equivakanext} hold, we say that the diagram above exhibits $T$ as the $\textbf{right Kan extension of G along F}$; and we write $T=\mathrm{Ran}_{F}G$, calling $\psi$ the $\textbf{counit}$ of this right Kan extension.
 \end{definition}

\begin{Remark}
The item $(b)$ in Proposition \ref{equivakanext} give us that:
$$(\mathrm{Ran}_{F}G)(C)=\mathrm{lim}^{\mathcal{C}(C,F(-))}G.$$
and $\mathrm{Ran}_{F}G$ exists, by definition, when the limit on the right exists for each $C\in \mathcal{C}$.
\end{Remark}

Just for completeness we recall the dual notion.
\begin{Remark}\label{condicionesKanleft}
Consider $F:\mathcal{A}\longrightarrow \mathcal{C}$,  $S:\mathcal{C}\longrightarrow \mathcal{B}$ and $G:\mathcal{A}\longrightarrow \mathcal{B}$ dg-functors

\begin{equation}
\begin{tikzcd}[ampersand replacement=\&]
	\& {\mathcal{C}} \\
	{\mathcal{A}} \&\& {\mathcal{B}}
	\arrow["F", from=2-1, to=1-2]
	\arrow["S", from=1-2, to=2-3]
	\arrow[""{name=0, anchor=center, inner sep=0}, "G"', from=2-1, to=2-3]
	\arrow["\varphi"', shorten <=5pt, shorten >=5pt, Rightarrow, from=0, to=1-2]
\end{tikzcd}
\end{equation}
where $\varphi:G\longrightarrow  SF$  is a closed and degree zero dg-natural transformation in $\mathrm{DgNat}(G, SF)$.
\end{Remark}

Given dg-funtores $F:\mathcal{A} \longrightarrow \mathcal{C}$, $S:\mathcal{C} \longrightarrow \mathcal{B}$, $G:\mathcal{A}\longrightarrow \mathcal{B}$  and dg-natural transformation  $\varphi:G \longrightarrow SF$ as in the Remark \ref{condicionesKanleft}. For each dg-funtor $H: \mathcal{C}^{op} \longrightarrow \mathrm{DgMod}(K)$  we have the morphisms
$$ \resizebox{1 \textwidth}{!}{\xymatrix{\mathrm{DgNat}\Big(H, \mathcal{B}(S(-),B)\Big)\ar[r] & \mathrm{DgNat}\Big(HF^{op}, \mathcal{B}(SF(-),B)\Big)\ar[r]  & \mathrm{DgNat}\Big(HF^{op}, \mathcal{B}(G(-),B)\Big)}}$$
Since $\mathrm{DgNat}\Big(H, \mathcal{B}(S(-),B)\Big)\simeq  \mathcal{B}\Big(\mathrm{colim}^{H}S,B\Big)$ and $ \mathrm{DgNat}\Big(HF^{op}, \mathcal{B}(G(-),B)\Big)\simeq \mathcal{B}\Big(\mathrm{colim}^{HF^{op}}G, B\Big)$, the above morphism induces a morphism (by Yoneda's Lemma):
 $$(F,\varphi)^{\ast}:\mathrm{colim}^{HF^{op}}G\longrightarrow \mathrm{colim}^{H}S.$$
 
\begin{proposition} $\textnormal{\cite[Dual of Theorem 4.6]{Kelly}}$ \label{equivakanextleft}
Consider a diagram as in Remark \ref{condicionesKanleft}. The following conditions are equivalent.
\begin{enumerate}
\item [(a)] For each $H:\mathcal{C}^{op}\longrightarrow \mathrm{DgMod}(K)$ the morphism $(F,\varphi)^{\ast}:\mathrm{colim}^{HF^{op}}G\longrightarrow \mathrm{colim}^{H}{S}$ is an isomorphism,  either colimit existing if the other does.

\item [(b)] For each $C\in \mathcal{C}$ we have that $\mathrm{colim}^{\mathcal{C}(F(-),C)}G=S(C)$ with unit given by
$$(\ast): \xymatrix{\mathcal{C}(F(-),C)\ar[d]^{S}\ar[drr]^{\nu_{C}} &\\
 \mathcal{B}(SF(-),S(C))\ar[rr]^{\mathcal{B}(\varphi_{-},1)}  & & \mathcal{B}(G(-),S(C)).}$$
That is for each $A\in \mathcal{A}$ the unit
$$[\nu_{C}]_{A}:\mathcal{C}(F(A),C)\longrightarrow  \mathcal{B}(G(A),S(C))$$ is defined as follows: for $f:F(A)\longrightarrow C$ we set
$$[\nu_{C}]_{A}(f):= \mathcal{B}(\varphi_{-},1)_{A}(S(f))=S(f)\circ \varphi_{A}\in  \mathcal{B}(G(A),S(C))$$

\item [(c)] For each $B\in \mathcal{B}$ and $C\in \mathcal{C}$ the map induced by $(\ast)$ in item (b), 
$$\Theta_{B}:\mathcal{B}(S(C),B)\longrightarrow \mathrm{DgNat}\Big(\mathcal{C}(F(-),C),\mathcal{B}(G(-),B)\Big)$$
is an isomorphism. We note that for $\beta:S(C)\longrightarrow B$ we have $\mathcal{B}(G(-),\beta):\mathcal{B}(G(-),S(C))\longrightarrow \mathcal{B}(G(-),B)$. Hence, 
$\Theta_{B}(\beta)=\mathcal{B}(G(-),\beta)\circ \nu_{C}.$
\end{enumerate}
 \end{proposition}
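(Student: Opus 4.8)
The plan is to deduce this statement as a formal consequence of Proposition \ref{equivakanext} by passing to opposite categories, so that essentially no new computation is needed. First I would observe that the whole left-Kan-extension diagram in Remark \ref{condicionesKanleft} is obtained from the right-Kan-extension diagram in Remark \ref{condicionesKan} by replacing $\mathcal{A}$, $\mathcal{C}$, $\mathcal{B}$ with $\mathcal{A}^{op}$, $\mathcal{C}^{op}$, $\mathcal{B}^{op}$ and $F$, $S$, $G$, $\varphi$ with $F^{op}$, $S^{op}$, $G^{op}$, $\varphi^{op}$; the key point to record carefully is that $\mathcal{B}^{op}(B,B')=\mathcal{B}(B',B)$ as dg-modules with the sign conventions fixed in Section 2, so that a colimit in $\mathcal{B}$ weighted by $H:\mathcal{C}^{op}\to\mathrm{DgMod}(K)$ is the same object as a limit in $\mathcal{B}^{op}$ weighted by the same $H:(\mathcal{C}^{op})\to\mathrm{DgMod}(K)$, i.e. $\mathrm{colim}^{H}S=\mathrm{lim}^{H}S^{op}$ as witnessed by Definitions \ref{weightedlim} and \ref{weightedcolim}. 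This identification is the one genuinely technical step: one must check that the defining natural isomorphism $\mathcal{B}(\mathrm{colim}^{W}F,B)\simeq\mathrm{DgNat}(W,\mathcal{B}(F(-),B))$ becomes, verbatim, the defining isomorphism for $\mathrm{lim}^{W}F^{op}$ in $\mathcal{B}^{op}$, including that the $\mathrm{DgNat}$'s agree because $\mathcal{B}(F(-),B)=\mathcal{B}^{op}(B,F^{op}(-))$ as dg-functors $\mathcal{A}^{op}\to\mathrm{DgMod}(K)$.

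Granting that dictionary, the three items of Proposition \ref{equivakanextleft} translate term by term into the three items of Proposition \ref{equivakanext} applied to the opposite diagram: item (a) here, the statement that $(F,\varphi)^{\ast}:\mathrm{colim}^{HF^{op}}G\to\mathrm{colim}^{H}S$ is an isomorphism for all $H:\mathcal{C}^{op}\to\mathrm{DgMod}(K)$, is exactly item (a) of \ref{equivakanext} for $F^{op}:\mathcal{A}^{op}\to\mathcal{C}^{op}$, $S^{op}$, $G^{op}$, since $\mathrm{colim}^{HF^{op}}G=\mathrm{lim}^{HF^{op}}G^{op}$; item (b), the assertion $\mathrm{colim}^{\mathcal{C}(F(-),C)}G=S(C)$ with unit $\nu_{C}$, corresponds to $\mathrm{lim}^{\mathcal{C}^{op}(C,F^{op}(-))}G^{op}=S^{op}(C)$ because $\mathcal{C}(F(-),C)=\mathcal{C}^{op}(C,F^{op}(-))$, and the counit formula $[\mu_{C}]_{A}(f)=\psi_{A}\circ T(f)$ becomes $[\nu_{C}]_{A}(f)=S(f)\circ\varphi_{A}$ after reversing arrows in $\mathcal{B}$ (one has to track that composition in $\mathcal{B}^{op}$ carries the Koszul sign $(-1)^{|\cdot||\cdot|}$, but both $\varphi_{A}$ and $S(f)$ are closed degree zero, so no sign actually appears); and item (c) translates likewise with $\mathcal{B}(\beta,G(-))$ replaced by $\mathcal{B}(G(-),\beta)$.

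The main obstacle is therefore purely bookkeeping: ensuring that all the sign conventions of the dg-structure — the opposite composition rule $\beta^{op}\circ\alpha^{op}=(-1)^{|\alpha||\beta|}(\alpha\circ\beta)^{op}$, the differential on $\mathcal{A}^{op}(A,B)$, and the twist in the tensor product of dg-categories — are compatible with the claim that ``$\mathrm{colim}$ in $\mathcal{B}$ $=$ $\mathrm{lim}$ in $\mathcal{B}^{op}$'' holds on the nose rather than merely up to a coherent isomorphism. Once that compatibility is verified, the proof consists of a single sentence: apply Proposition \ref{equivakanext} to the diagram
\begin{equation*}
\begin{tikzcd}[ampersand replacement=\&]
	\& {\mathcal{C}^{op}} \\
	{\mathcal{A}^{op}} \&\& {\mathcal{B}^{op}}
	\arrow["F^{op}", from=2-1, to=1-2]
	\arrow["S^{op}", from=1-2, to=2-3]
	\arrow["G^{op}"', from=2-1, to=2-3]
\end{tikzcd}
\end{equation*}
with counit $\varphi^{op}$, and unwind the resulting three equivalent conditions through the dictionary above. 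I would also remark, for the reader's convenience, that this is exactly the sense in which Kelly's Theorem 4.6 is self-dual, which is why the excerpt is able to cite it as ``Dual of Theorem 4.6''.
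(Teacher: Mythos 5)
Your proposal is correct and matches the paper's (implicit) argument: the paper offers no independent proof, citing the result simply as the dual of \cite[Theorem 4.6]{Kelly}, and your dualization through $\mathcal{A}^{op}$, $\mathcal{C}^{op}$, $\mathcal{B}^{op}$ with the identification $\mathrm{colim}^{W}F=\mathrm{lim}^{W}F^{op}$ is precisely what that duality means. The only nitpick is that in your sign check for $[\nu_{C}]_{A}$ it is enough that $\varphi_{A}$ has degree zero ($S(f)$ need not), but this does not affect the conclusion.
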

 
\begin{definition}\label{definitionleftkan}
Consider the following diagram as in Remark \ref{condicionesKanleft}.
If the equivalent conditions of the Proposition \ref{equivakanextleft} hold, we say that the diagram above exhibits $S$ as the $\textbf{left Kan extension of G along F}$; and we write $S=\mathrm{Lan}_{F}G$, calling $\varphi$ the $\textbf{unit}$ of this left Kan extension.
\end{definition}

 \subsection{Universal Properties of Kan extensions}\label{PropertyKan}

The left and right Kan extension of $G$ along $F$ has an important universal property. In the following we will state the results for left Kan extensions and left the version for right extensions to the reader.

\begin{proposition}\label{lanadjoint}
Let $F:\mathcal{A}\longrightarrow \mathcal{C}$ and $G:\mathcal{A}\longrightarrow \mathrm{DgMod}(K)$ dg-functors. For any dg functor $S:\mathcal{C}\longrightarrow \mathrm{DgMod}(K)$ we have an isomorphism in  $\mathrm{DgMod}_{Str}(K)$, which is natural in the variable $S$:
\begin{equation}\label{propunivlefkan}
\Delta:\mathrm{DgNat}\Big(\mathrm{Lan}_{F}G, S\Big)\longrightarrow \mathrm{DgNat}\Big(G,SF\Big),
\end{equation}
the unit of this representation being the unit $\varphi$. Dually,
\begin{equation}
\mathrm{DgNat}\Big(S,\mathrm{Ran}_{F}G\Big)\simeq \mathrm{DgNat}\Big(SF,G\Big).
\end{equation}
\end{proposition}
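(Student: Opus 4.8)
The plan is to establish the left-Kan version (\ref{propunivlefkan}); the right-Kan statement then follows by a dual argument, which I would only sketch. The strategy is to combine the weighted-colimit description of the left Kan extension (item (b) of Proposition \ref{equivakanextleft}, namely $(\mathrm{Lan}_{F}G)(C)=\mathrm{colim}^{\mathcal{C}(F(-),C)}G$, which by Proposition \ref{tesorcasofacil} equals $\mathcal{C}(F(-),C)\otimes_{\mathcal{A}}G$) with Corollary \ref{construcciondgnat}, which lets us rewrite $\mathrm{DgNat}$ as an end, and then play the usual tensor-hom and Fubini manipulations from Kelly's book, exactly as in the proof of Proposition \ref{tesorcasofacil}.

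First I would spell out the target. Since $\mathrm{Lan}_{F}G$ and $G$ both land in $\mathrm{DgMod}(K)$, Corollary \ref{construcciondgnat} gives
$$\mathrm{DgNat}\Big(\mathrm{Lan}_{F}G,S\Big)\;\cong\;\int_{C\in\mathcal{C}}\Big[(\mathrm{Lan}_{F}G)(C),\,S(C)\Big]\;\cong\;\int_{C\in\mathcal{C}}\Big[\mathcal{C}(F(-),C)\otimes_{\mathcal{A}}G,\,S(C)\Big],$$
where $[-,-]=\mathrm{Hom}_{\mathrm{DgMod}(K)}(-,-)$. Using that $[-,B]$ turns coends into ends (the dual of formula 2.3 on page 28 of \cite{Kelly}, already invoked in the proof of Proposition \ref{tesorcasofacil}) together with the coend formula in Definition \ref{tensorprodascoend}, the inner Hom becomes $\int_{A\in\mathcal{A}}[\mathcal{C}(F(A),C)\otimes G(A),\,S(C)]$; adjunction and symmetry rewrite this as $\int_{A\in\mathcal{A}}[\mathcal{C}(F(A),C),\,[G(A),S(C)]]$. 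Now I would apply the Fubini theorem for ends to swap $\int_{C}$ and $\int_{A}$, and then use the dg-Yoneda Lemma in the inner variable $C$: $\int_{C\in\mathcal{C}}[\mathcal{C}(F(A),C),\,[G(A),S(C)]]\cong[G(A),S(F(A))]$, since $[G(A),S(-)]:\mathcal{C}\to\mathrm{DgMod}(K)$ is a dg-functor and the end over $C$ of $[\mathcal{C}(C_0,C),W(C)]$ computes $W(C_0)$. This leaves $\int_{A\in\mathcal{A}}[G(A),(SF)(A)]$, which is $\mathrm{DgNat}(G,SF)$ by Corollary \ref{construcciondgnat} again.

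Next I would check that the composite isomorphism is the one induced by the unit $\varphi$, i.e. that it sends a dg-natural transformation $\alpha:\mathrm{Lan}_{F}G\to S$ to $(\alpha F)\circ\varphi:G\to SF$. This is really the content of item (c) of Proposition \ref{equivakanextleft}: the map $\Theta_{B}$ there, specialized to $B=S(C)$ and assembled over all $C$ using the $\mathrm{DgMod}(K)$-valued case, is precisely the representation map, and its inverse is given by whiskering with $F$ and composing with the unit $\nu_{C}$ — which, unwinding item (b), is $\varphi$. So rather than tracking the Yoneda isomorphisms elementwise, I would argue that the chain of natural isomorphisms above is natural in $S$ and that any such natural isomorphism $\mathrm{DgNat}(\mathrm{Lan}_{F}G,S)\cong\mathrm{DgNat}(G,SF)$ is determined by the image of $\mathrm{id}_{\mathrm{Lan}_{F}G}$, which by construction (the unit of the weighted-colimit representation packaged in Proposition \ref{equivakanextleft}) is exactly $\varphi$; hence the composite is the stated $\Delta$.

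Finally, for the dual statement $\mathrm{DgNat}(S,\mathrm{Ran}_{F}G)\cong\mathrm{DgNat}(SF,G)$ I would run the same computation with $(\mathrm{Ran}_{F}G)(C)=\mathrm{lim}^{\mathcal{C}(C,F(-))}G=\mathrm{DgNat}(\mathcal{C}(C,F(-)),G)$ from item (b) of Proposition \ref{equivakanext} together with Proposition \ref{limcasofacil}, using this time that $[B,-]$ preserves ends (formula 2.3, page 28 of \cite{Kelly}) and the contravariant dg-Yoneda Lemma. The main obstacle I anticipate is not any single step but the sign bookkeeping: the tensor-hom adjunction, the symmetry isomorphism, and the dg-Yoneda map all carry Koszul signs, and one must verify that they cancel so that the final identification really is $\Delta$ (unit $\varphi$) on the nose rather than up to a sign. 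I would handle this by appealing to the enriched-categorical formalism of \cite{Kelly}, where all these isomorphisms are instances of $\mathcal{V}$-natural isomorphisms for $\mathcal{V}=\mathrm{DgMod}_{str}(K)$ and the coherence is automatic, rather than recomputing signs by hand.
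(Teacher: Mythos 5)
Your proposal is correct and follows essentially the same route as the paper's proof: rewrite $\mathrm{DgNat}(\mathrm{Lan}_{F}G,S)$ as an end via Corollary \ref{construcciondgnat}, substitute the weighted-colimit (coend/tensor) description of $\mathrm{Lan}_{F}G$, apply tensor-hom adjunction, Fubini, and the dg-Yoneda Lemma to land on $\int_{A}[G(A),SF(A)]=\mathrm{DgNat}(G,SF)$. Your extra discussion identifying the composite with $\Delta(\alpha)=(\alpha F)\circ\varphi$ is a welcome addition that the paper itself only records afterwards in Remark \ref{descripdeltaespe}.
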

\begin{proof}
We have the following isomorphisms
\begin{align*}
& \mathrm{DgNat}\Big(\mathrm{Lan}_{F}G, S\Big)=\\
 & =\int_{C\in \mathcal{C}}\mathcal{B}\Big((\mathrm{Lan}_{F}G)(C), S(C)\Big) & [\text{Corollary}\,\,\ref{construcciondgnat}]\\
& = \int_{C\in \mathcal{C}}\mathcal{B}\Big(\mathrm{colim}^{\mathcal{C}(F,C)}G, S(C)\Big) & [\text{Proposition}\,\, \ref{equivakanextleft}]\\
& =\int_{C\in \mathcal{C}}\mathrm{DgNat}\Big(\mathcal{C}(F,C),\mathcal{B}(G,S(C))\Big)
& [\text{Definition}\,\, \ref{weightedcolim}]\\
 & =\int_{C\in \mathcal{C}}\int_{A\in \mathcal{A}}\Big[\mathcal{C}(F(A),C),\mathcal{B}(G(A),S(C))\Big] & [\text{Corollary}\,\,\ref{construcciondgnat}]\\
& =\int_{A\in \mathcal{A}}\int_{C\in \mathcal{C}}\Big[\mathcal{C}(F(A),C),\mathcal{B}(G(A),S(C))\Big] & [\text{Fubini}]\\
 & =\int_{A\in \mathcal{A}}\mathrm{DgNat}\Big(\mathcal{C}(F(A),-),\mathcal{B}(G(A),S)\Big) &  [\text{Corollary}\,\,\ref{construcciondgnat}]\\
& =\int_{A\in \mathcal{A}}\mathcal{B}(G(A),S(F(A)) & [\text{Yoneda's Lemma}]\\
& = \mathrm{DgNat}\Big(G,SF\Big), & [\text{Proposition}\,\,\ref{construcciondgnat}]
\end{align*}
where the Fubini's rule can be seen in Equation 2.8 on page 29 in \cite{Kelly}.
\end{proof}

\begin{Remark}\label{descripdeltaespe}
It can be seen that the isomorphism $\Delta:\mathrm{DgNat}\Big(\mathrm{Lan}_{F}G, S\Big)\longrightarrow \mathrm{DgNat}\Big(G,SF\Big)$ given in Proposition \ref{lanadjoint} is defined as: 
$$\Delta(\alpha)= (\alpha\circ F)\circ \varphi, \quad \forall \alpha\in \mathrm{DgNat}\Big(\mathrm{Lan}_{F}G,S\Big).$$
Since $\varphi$ is of degree zero, we have that the pair $(\mathrm{Lan}_{F}G,\varphi)$ satisfies the following $\textbf{universal property}$: for each dg functor $S:\mathcal{C}\longrightarrow \mathrm{DgMod}(K)$ and a $\eta \in \mathrm{DgNat}^{n}(G,  SF)$ of degree $n$, there exists a unique $\alpha\in \mathrm{DgNat}^{n}(\mathrm{Lan}_{F}G, S)$ of degree $n$ such that
$$\Delta(\alpha)= (\alpha\circ F)\circ \varphi=\eta.$$
This implies that for each $\eta\in \mathrm{DgNat}(G,SF)$, there exists a unique $\alpha\in \mathrm{DgNat}(\mathrm{Lan}_{F}G, S)$ such that $\Delta(\alpha)= (\alpha\circ F)\circ \varphi=\eta.$ The previous universal property is depicted  in the following diagram:
$$\xymatrix{ &  \mathcal{C}\ar@/_1pc/[ddrrr]_{\mathrm{Lan}_{F}G}\ar@/^3pc/[ddrrr]^{S} & & & & \\
 & & {}\ar@{=>}[ur]_{\alpha} & & &\\
 \mathcal{A}\ar[rrrr]_{G}\ar[uur]^{F}  & {}\ar@{=>}[uu]_(.4){\varphi}\ar@{}[uurrr]|(.5){} & & & \mathrm{DgMod}(K).}$$
\end{Remark}

\begin{definition}\label{dgadjuncion}
\begin{enumerate}
\item [(a)] 
Let $F:\mathcal{A}\longrightarrow \mathcal{B}$ and  $G:\mathcal{A}\longrightarrow \mathcal{A}$ dg-functors. We say that $F$ is $\textbf{dg-left adjoint}$ to $G$ if there exists an isomorphisms in $\mathrm{DgMod}_{str}(K)$
$$\Phi_{A,B}:\mathcal{B}(F(A),B)\longrightarrow \mathcal{A}(A,G(B))$$
natural in $A$ and $B$. In this case we say that $(F,G)$ is a $\textbf{dg-adjoint pair}$.

\item [(b)] Let $F:\mathcal{A}\longrightarrow \mathcal{B}$ be a dg-functor. We say that $F$ is $\textbf{dg-fully}$ $\textbf{faithful}$ if for each $A,A'\in \mathcal{A}$ we have an isomorphism in $\mathrm{DgMod}_{str}(K)$:
$$\mathcal{A}(A,A')\longrightarrow \mathcal{B}(F(A),F(A')).$$
\end{enumerate}
\end{definition}

\begin{Remark}
Let $F:\mathcal{A}\longrightarrow \mathcal{B}$ and  $G:\mathcal{A}\longrightarrow \mathcal{A}$ dg-functors such that  $(F,G)$ is a dg-adjoint pair. In this case the unit $\eta:1_{\mathcal{A}}\longrightarrow GF$ and counit $\varepsilon: FG\longrightarrow 1_{\mathcal{B}}$ are both closed and degree zero natural transformations.
\end{Remark}

 \begin{proposition}\label{existeLan}
Consider a dg-functor $F:\mathcal{A}\longrightarrow \mathcal{C}$. Then there exists a dg-funtor
 $$\mathbb{L}\mathrm{an}_{F}(-): \mathrm{DgMod}(\mathcal{A})\longrightarrow  \mathrm{DgMod}(\mathcal{C})$$
 defined as $\mathbb{L}\mathrm{an}_{F}(G):=\mathrm{Lan}_{F}(G)$.
 \end{proposition}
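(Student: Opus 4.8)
The plan is to show that the assignment $G\mapsto \mathrm{Lan}_F(G)$ is functorial on dg-natural transformations and that it respects their dg-structure, so that it genuinely defines a dg-functor. First I would recall, using Proposition \ref{equivakanextleft}(b), that for each $C\in\mathcal{C}$ and each $G\in\mathrm{DgMod}(\mathcal{A})$ the object $(\mathrm{Lan}_F G)(C)=\mathrm{colim}^{\mathcal{C}(F(-),C)}G$ exists in $\mathrm{DgMod}(K)$; indeed, since $\mathcal{C}(F(-),C):\mathcal{A}^{op}\longrightarrow\mathrm{DgMod}(K)$ and $G:\mathcal{A}\longrightarrow\mathrm{DgMod}(K)$, Proposition \ref{tesorcasofacil} gives
$$(\mathrm{Lan}_F G)(C)\simeq \mathcal{C}(F(-),C)\otimes_{\mathcal{A}}G=\int^{A\in\mathcal{A}}\mathcal{C}(F(A),C)\otimes G(A),$$
which exists by Proposition \ref{coendexistedg}. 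This gives $\mathbb{L}\mathrm{an}_F$ on objects, with the variable $C$ giving a dg-functor $\mathcal{C}\longrightarrow\mathrm{DgMod}(K)$ by functoriality of the coend in $C$ (the $\mathcal{C}(F(A),-)$ are dg-functors and coends are computed via colimits in $\mathrm{DgMod}(K)$, which is a dg-category with the required (co)limits).

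Next I would define $\mathbb{L}\mathrm{an}_F$ on morphisms. Given $\eta\in\mathrm{DgNat}^n(G,G')$, for each $C$ the map $1\otimes\eta$ induces a degree $n$ morphism $\int^{A}\mathcal{C}(F(A),C)\otimes G(A)\longrightarrow\int^{A}\mathcal{C}(F(A),C)\otimes G'(A)$, and as $C$ varies these assemble into $\mathbb{L}\mathrm{an}_F(\eta)\in\mathrm{DgNat}^n(\mathrm{Lan}_F G,\mathrm{Lan}_F G')$; naturality in $C$ is inherited from the naturality of the coend construction. A cleaner, more conceptual route is to use the universal property in Remark \ref{descripdeltaespe}: the isomorphism
$$\Delta:\mathrm{DgNat}\big(\mathrm{Lan}_F G,S\big)\xrightarrow{\ \sim\ }\mathrm{DgNat}\big(G,SF\big),\qquad \Delta(\alpha)=(\alpha\circ F)\circ\varphi,$$
of Proposition \ref{lanadjoint} is an isomorphism of dg $K$-modules natural in $S$; taking $S=\mathrm{Lan}_F G'$, the element $\varphi'\circ\eta\in\mathrm{DgNat}^n(G,(\mathrm{Lan}_F G')F)$ has a unique preimage, which we declare to be $\mathbb{L}\mathrm{an}_F(\eta)$. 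Because $\Delta$ is $K$-linear and of degree zero, $\eta\mapsto\mathbb{L}\mathrm{an}_F(\eta)$ is $K$-linear and degree-preserving; functoriality ($\mathbb{L}\mathrm{an}_F(\zeta\eta)=\mathbb{L}\mathrm{an}_F(\zeta)\mathbb{L}\mathrm{an}_F(\eta)$ and $\mathbb{L}\mathrm{an}_F(1_G)=1_{\mathrm{Lan}_F G}$) follows formally from uniqueness in the universal property together with the identity $\varphi$-triangle.

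Finally I would verify that $\mathbb{L}\mathrm{an}_F$ commutes with the differentials, i.e. $\mathbb{L}\mathrm{an}_F(d(\eta))=d(\mathbb{L}\mathrm{an}_F(\eta))$. This again reduces to the fact that $\Delta$ is a morphism in $\mathrm{DgMod}_{str}(K)$ in each entry — it commutes with the differential on $\mathrm{Hom}$-complexes by the naturality statement in Proposition \ref{lanadjoint} — together with the fact that the unit $\varphi$ is closed of degree zero, so that precomposition with $\varphi$ and with $F$ are chain maps; hence $\Delta(d(\mathbb{L}\mathrm{an}_F(\eta)))=d(\Delta(\mathbb{L}\mathrm{an}_F(\eta)))=d(\varphi'\circ\eta)=\varphi'\circ d(\eta)=\Delta(\mathbb{L}\mathrm{an}_F(d(\eta)))$, and injectivity of $\Delta$ finishes it. I expect the main obstacle to be purely bookkeeping: checking that the comparison maps $(\mathrm{Lan}_F G)(C)\to(\mathrm{Lan}_F G)(C')$ obtained from a morphism $C\to C'$ are compatible with signs and with the coend presentation of Proposition \ref{coendexistedg}, so that $\mathrm{Lan}_F G$ is honestly a dg-functor $\mathcal{C}\to\mathrm{DgMod}(K)$ and not merely a graded functor; once that is in place, all the functoriality and differential-compatibility statements for $\mathbb{L}\mathrm{an}_F$ follow formally from the universal property in Remark \ref{descripdeltaespe}.
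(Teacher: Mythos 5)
Your proposal is correct and follows essentially the same route as the paper: existence of $(\mathrm{Lan}_F G)(C)$ via Propositions \ref{equivakanextleft} and \ref{tesorcasofacil}, definition of $\mathbb{L}\mathrm{an}_F$ on morphisms by the universal property of Proposition \ref{lanadjoint}/Remark \ref{descripdeltaespe}, functoriality from uniqueness, and compatibility with differentials from the fact that the unit $\varphi$ is closed of degree zero (your appeal to $\Delta$ being a chain map is just a repackaging of the paper's direct Leibniz-rule computation of $d(\mathbb{L}\mathrm{an}_F(\eta)\circ\varphi')$). The only cosmetic difference is your alternative coend-level definition of $\mathbb{L}\mathrm{an}_F(\eta)$ via $1\otimes\eta$, which you rightly set aside in favor of the universal-property argument the paper also uses.
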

\begin{proof}
Consider a dg-funtor $F:\mathcal{A}\longrightarrow \mathcal{C}$ and $G\in \mathrm{DgMod}(\mathcal{A})$. We have the following diagram for all $C\in \mathcal{C}$
  $$\xymatrix{ & &\mathrm{DgMod}(K) & \\
 &\mathcal{C}\ar[ur]^(.4){\mathcal{C}(-,C)} & \\
 \mathcal{A}\ar[rr]_{G}\ar[ur]^{F} & {} & \mathrm{DgMod}(K). }$$
By Propositions \ref{equivakanextleft} and \ref{tesorcasofacil} we have that
$$\mathrm{Lan}_{F}(G)(C)=\mathrm{colim}^{\mathcal{C}(F,C)}G=\mathcal{C}(F,C)\otimes_{\mathcal{A}}G.$$
Hence, there exists $\mathrm{Lan}_{F}G:\mathcal{C}\longrightarrow \mathrm{DgMod}(K)$. We define a dg-functor
 $$\mathbb{L}\mathrm{an}_{F}(-): \mathrm{DgMod}(\mathcal{A})\longrightarrow  \mathrm{DgMod}(\mathcal{C})$$
as $\mathbb{L}\mathrm{an}_{F}(G):=\mathrm{Lan}_{F}(G)$ for $G\in \mathrm{DgMod}(\mathcal{A})$. That is, for $G\in \mathrm{DgMod}(\mathcal{A})$ and $C\in \mathcal{C}$:

\begin{equation}
 \mathbb{L}\mathrm{an}_{F}(G)(C):=\mathcal{C}(F,C)\otimes_{\mathcal{A}}G.
\end{equation}
Let us see that $\mathbb{L}\mathrm{an}_{F}(-): \mathrm{DgMod}(\mathcal{A})\longrightarrow  \mathrm{DgMod}(\mathcal{C})$ is a functor. Let
$\eta:G'\longrightarrow G$ a dg-natural transformation in  $\mathrm{DgMod}(\mathcal{A})$. Consider $\mathrm{Lan}_{F}G$, by  Definition \ref{definitionleftkan} and Remark \ref{condicionesKanleft}, there exists a natural transformation $\varphi:G\longrightarrow \mathrm{Lan}_{F}G\circ F$ of degree zero and such that $d(\varphi)=0$. Hence, we get a dg-natural transformation 
$\varphi\eta:G'\longrightarrow  \mathrm{Lan}_{F}G\circ F$. By Proposition \ref{lanadjoint}, we have the isomorphism:
$$\mathrm{Hom}_{\mathrm{DgMod}(\mathcal{C})}\Big(\mathrm{Lan}_{F}G', \mathrm{Lan}_{F}G\Big)\simeq \mathrm{Hom}_{\mathrm{DgMod}(\mathcal{A})}\Big(G',\mathrm{Lan}_{F}G\circ F\Big).$$
Thus, there exists a unique dg-natural transformation $\mathbb{L}\mathrm{an}_{F}(\eta)$ as in the following diagram:
$$\xymatrix{ &  \mathcal{C}\ar@/_1pc/[ddrrr]_{\mathrm{Lan}_{F}G'}\ar@/^3pc/[ddrrr]^{\mathrm{Lan}_{F}G} & & & & \\
 & & {}\ar@{=>}[ur]_{\mathbb{L}\mathrm{an}_{F}(\eta)} & & &\\
 \mathcal{A}\ar[rrrr]_{G}\ar[uur]^{F}\ar@/_3pc/[rrrr]_{G'}  & {}\ar@{=>}[uur]_(.4){\varphi}\ar@{}[uurrr]|(.5){} & & & \mathrm{DgMod}(K)\\
 & &  {}\ar@{=>}[u]^{\eta} & }$$
 such that $\mathbb{L}\mathrm{an}_{F}(\eta)\circ \varphi'=\varphi\circ \eta$, where $\varphi':G'\longrightarrow \mathrm{Lan}_{F}G'\circ F$ is the natural transformation of degree zero given by the fact that $\mathrm{Lan}_{F}G'$ is a left Kan extension (see Definition \ref{definitionleftkan}). Hence, we define 
 $$\mathbb{L}\mathrm{an}_{F}(\eta):\mathbb{L}\mathrm{an}_{F}(G')\longrightarrow \mathbb{L}\mathrm{an}_{F}(G),$$
 as the unique morphism such that   $\mathbb{L}\mathrm{an}_{F}(\eta)\circ \varphi'=\varphi\circ \eta$.\\
 The fact that this defines a functor comes from the uniqueness of $\mathbb{L}\mathrm{an}_{F}(\eta)$. Now, let us see that $\mathbb{L}\mathrm{an}_{F}(-)$ commutes with differentials. That is, let us see that the following diagram commutes for homogenous elements ($d$ denotes different differentials)

$$\xymatrix{\mathrm{Hom}_{\mathrm{DgMod}(\mathcal{A})}(G',G)\ar[rrr]^{\mathbb{L}\mathrm{an}_{F}}\ar[d]_{d} & & & 
 \mathrm{Hom}_{\mathrm{DgMod}(\mathcal{C})}(\mathrm{Lan}_{F}G',\mathrm{Lan}_{F}G)\ar[d]^{d}\\
 \mathrm{Hom}_{\mathrm{DgMod}(\mathcal{A})}(G',G)\ar[rrr]_{\mathbb{L}\mathrm{an}_{F}} & & & 
 \mathrm{Hom}_{\mathrm{DgMod}(\mathcal{C})}(\mathbb{L}\mathrm{an}_{F}G',\mathrm{Lan}_{F}G)}$$
 
 Indeed, let $\eta:G'\longrightarrow G$ of degree $n$.  We have that $\mathbb{L}\mathrm{an}_{F}(\eta)\circ \varphi'=\varphi\circ \eta$. 
On one hand, since $d(\varphi')=0$ we have that:
$$d(\mathbb{L}\mathrm{an}_{F}(\eta)\circ \varphi')=d(\mathbb{L}\mathrm{an}_{F}(\eta))\circ \varphi'+(-1)^{|\mathbb{L}\mathrm{an}_{F}(\eta)|}\mathbb{L}\mathrm{an}_{F}(\eta)\circ d(\varphi')=d(\mathbb{L}\mathrm{an}_{F}(\eta)\circ \varphi'$$
On the other hand, since $|\varphi|=0$ we have that
$$d(\varphi\circ \eta)=d(\varphi)\circ \eta+(-1)^{|\varphi|}\varphi\circ d(\eta)=\varphi\circ d(\eta).$$
Thus, we have that
$$d(\mathbb{L}\mathrm{an}_{F}(\eta))\circ \varphi'=\varphi\circ d(\eta)$$
Since $\mathbb{L}\mathrm{an}_{F}(d(\eta))$ is the unique such that
$$\mathbb{L}\mathrm{an}_{F}((d(\eta))\circ \varphi'=\varphi\circ d(\eta),$$
we conclude that
$$\mathbb{L}\mathrm{an}_{F}((d(\eta))=d(\mathbb{L}\mathrm{an}_{F}(\eta))$$
This proves that the previous diagram commutes. Hence $\mathbb{L}\mathrm{an}_{F}(-)$ conmutes with differential and hence $\mathbb{L}\mathrm{an}_{F}(-): \mathrm{DgMod}(\mathcal{A})\longrightarrow  \mathrm{DgMod}(\mathcal{C})$ is a dg-functor.
\end{proof}
 
Now, we have the dual of Proposition \ref{existeLan}.
 
\begin{proposition}\label{existeRan}
Consider a dg-functor $F:\mathcal{A}\longrightarrow \mathcal{C}$. Then there exists a dg-functor
 $$\mathbb{R}\mathrm{an}_{F}(-): \mathrm{DgMod}(\mathcal{A})\longrightarrow  \mathrm{DgMod}(\mathcal{C})$$
 defined as $\mathbb{R}\mathrm{an}_{F}(G):=\mathrm{Ran}_{F}(G)$.
 \end{proposition}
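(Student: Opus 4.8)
The plan is to dualize the proof of Proposition \ref{existeLan} essentially verbatim, replacing colimits by limits, coends by ends, and left Kan extensions by right Kan extensions. First I would observe that, given $F:\mathcal{A}\longrightarrow \mathcal{C}$ and $G\in \mathrm{DgMod}(\mathcal{A})$, for each $C\in \mathcal{C}$ we can form the weighted limit $\mathrm{lim}^{\mathcal{C}(C,F(-))}G$; by Proposition \ref{limcasofacil} this equals $\mathrm{DgNat}\big(\mathcal{C}(C,F(-)),G\big)$, which exists because ends into $\mathrm{DgMod}(K)$ always exist by Proposition \ref{descripendModK}. Hence by Proposition \ref{equivakanext}(b) the dg-functor $\mathrm{Ran}_{F}G:\mathcal{C}\longrightarrow \mathrm{DgMod}(K)$ exists and satisfies $(\mathrm{Ran}_{F}G)(C)=\mathrm{lim}^{\mathcal{C}(C,F(-))}G$, with counit $\psi:(\mathrm{Ran}_{F}G)F\longrightarrow G$ a closed degree zero dg-natural transformation. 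So we may define $\mathbb{R}\mathrm{an}_{F}(G):=\mathrm{Ran}_{F}(G)\in \mathrm{DgMod}(\mathcal{C})$ on objects.

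Next I would make $\mathbb{R}\mathrm{an}_{F}(-)$ into a functor. Given a dg-natural transformation $\eta:G\longrightarrow G'$ in $\mathrm{DgMod}(\mathcal{A})$, compose with the counit $\psi:(\mathrm{Ran}_{F}G)F\longrightarrow G$ to obtain $\eta\circ\psi:(\mathrm{Ran}_{F}G)F\longrightarrow G'$, and then use the universal property of $\mathrm{Ran}_{F}G'$ — namely the dual isomorphism $\mathrm{DgNat}\big(S,\mathrm{Ran}_{F}G'\big)\simeq \mathrm{DgNat}\big(SF,G'\big)$ from Proposition \ref{lanadjoint} with $S=\mathrm{Ran}_{F}G$ — to produce a unique dg-natural transformation $\mathbb{R}\mathrm{an}_{F}(\eta):\mathbb{R}\mathrm{an}_{F}(G)\longrightarrow \mathbb{R}\mathrm{an}_{F}(G')$ with $\psi'\circ \mathbb{R}\mathrm{an}_{F}(\eta) F=\eta\circ \psi$, where $\psi'$ is the counit of $\mathrm{Ran}_{F}G'$. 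Functoriality (preservation of identities and composition) then follows from the uniqueness clause in this universal property, exactly as in the proof of Proposition \ref{existeLan}.

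Finally I would check that $\mathbb{R}\mathrm{an}_{F}(-)$ commutes with the differentials on Hom-complexes, so that it is a dg-functor and not merely a graded one. For $\eta\in \mathrm{DgNat}^{n}(G,G')$ we have the defining relation $\psi'\circ \mathbb{R}\mathrm{an}_{F}(\eta)F=\eta\circ\psi$; applying the Leibniz rule \eqref{Homdgestructure} on both sides, using that $\psi$ and $\psi'$ are closed of degree zero (so $d(\psi)=d(\psi')=0$ and the sign factors $(-1)^{|\psi|}$, $(-1)^{|\psi'|}$ are trivial), yields $\psi'\circ d(\mathbb{R}\mathrm{an}_{F}(\eta))F=d(\eta)\circ\psi$. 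By uniqueness in the universal property, $d(\mathbb{R}\mathrm{an}_{F}(\eta))=\mathbb{R}\mathrm{an}_{F}(d(\eta))$, which is the required commutativity. I expect no genuine obstacle here: the only point requiring a little care is bookkeeping the signs in the Leibniz rule and confirming that the degree of $\mathbb{R}\mathrm{an}_{F}(\eta)$ matches that of $\eta$, which is forced by the degree zero isomorphism of Proposition \ref{lanadjoint}; everything else is the formal dual of the already-proved Proposition \ref{existeLan}.
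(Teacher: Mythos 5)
Your proposal is correct and follows essentially the same route as the paper: the paper identifies $\mathbb{R}\mathrm{an}_{F}(G)(C)$ with $\mathrm{DgNat}\big(\mathcal{C}(C,F(-)),G\big)$ via Propositions \ref{equivakanext} and \ref{limcasofacil} and then simply remarks that functoriality and compatibility with differentials are checked ``as in Proposition \ref{existeLan}.'' You have merely written out explicitly the dualization (universal property of the counit, Leibniz-rule computation) that the paper leaves to the reader, so there is nothing to object to.
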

\begin{proof}
Consider a dg-funtor $F:\mathcal{A}\longrightarrow \mathcal{C}$ and $G\in \mathrm{DgMod}(\mathcal{A})$. We have the following diagram for all $C\in \mathcal{C}$
  $$\xymatrix{ & &  \mathrm{DgMod}(K) & \\
 &\mathcal{C}\ar[ur]^(.4){\mathcal{C}(C,-)} & \\
 \mathcal{A}\ar[rr]_{G}\ar[ur]^{F} & {} & \mathrm{DgMod}(K). }$$
By Propositions \ref{equivakanext} and \ref{limcasofacil} we have that 
 $$\mathrm{Ran}_{F}(G)(C)=\mathrm{lim}^{\mathcal{C}(C,F(-))}G=\mathrm{DgNat}\Big(\mathcal{C}(C,F(-)),G\Big).$$
Hence, there exists $\mathrm{Ran}_{F}G:\mathcal{C}\longrightarrow \mathrm{DgMod}(K)$. We define a dg-functor
 $$\mathbb{R}\mathrm{an}_{F}(-): \mathrm{DgMod}(\mathcal{A})\longrightarrow  \mathrm{DgMod}(\mathcal{C})$$
 defined as $\mathbb{R}\mathrm{an}_{F}(G):=\mathrm{Ran}_{F}(G)$. That is, for $G\in \mathrm{DgMod}(\mathcal{A})$ and $C\in \mathcal{C}$:
 $$\mathbb{R}\mathrm{an}_{F}(G)(C):=\mathrm{DgNat}\Big(\mathcal{C}(C,F(-)),G\Big).$$
As in Proposition \ref{existeLan}, we can check that $\mathbb{R}\mathrm{an}_{F}(-): \mathrm{DgMod}(\mathcal{A})\longrightarrow  \mathrm{DgMod}(\mathcal{C})$
is a dg-functor.
 \end{proof}

\begin{proposition}\label{dgadjuninduced}
Let $F:\mathcal{A}\longrightarrow \mathcal{C}$ a dg-functor and consider the induced functor $F_{\ast}:\mathrm{DgMod}(\mathcal{C})\longrightarrow \mathrm{DgMod}(\mathcal{A})$. Hence the dg-functors  given in Propositions \ref{existeRan} and \ref{existeLan}:

 $$\mathbb{R}\mathrm{an}_{F}(-), \mathbb{L}\mathrm{an}_{F}(-): \mathrm{DgMod}(\mathcal{A})\longrightarrow  \mathrm{DgMod}(\mathcal{C})$$
are the right and left dg-adjoints of $F_{\ast}$.\\
That is, we have the diagram of dg-adjoint pairs:
$$\xymatrix{\mathrm{DgMod}(\mathcal{C})\ar[rrr]|{F_{\ast}}  &  &&\mathrm{DgMod}(\mathcal{A})
\ar@<-2ex>[lll]_{\mathbb{L}\mathrm{an}_{F}(-)}\ar@<2ex>[lll]^{\mathbb{R}\mathrm{an}_{F}(-)}}$$
\end{proposition}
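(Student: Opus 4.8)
The plan is to derive both adjunctions directly from the universal properties of Kan extensions recorded in Proposition \ref{lanadjoint}, together with the functoriality of $\mathbb{L}\mathrm{an}_{F}(-)$ and $\mathbb{R}\mathrm{an}_{F}(-)$ established in Propositions \ref{existeLan} and \ref{existeRan}, and the definition $F_{\ast}(S)=SF$.

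First I would treat the pair $(\mathbb{L}\mathrm{an}_{F}(-),F_{\ast})$. Fix $G\in \mathrm{DgMod}(\mathcal{A})$ and $S\in \mathrm{DgMod}(\mathcal{C})$. Unwinding Definitions \ref{DfModC} and the definition of $F_{\ast}$, and using $\mathbb{L}\mathrm{an}_{F}(G)=\mathrm{Lan}_{F}G$, one has $\mathrm{Hom}_{\mathrm{DgMod}(\mathcal{C})}(\mathbb{L}\mathrm{an}_{F}(G),S)=\mathrm{DgNat}(\mathrm{Lan}_{F}G,S)$ and $\mathrm{Hom}_{\mathrm{DgMod}(\mathcal{A})}(G,F_{\ast}(S))=\mathrm{DgNat}(G,SF)$. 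Proposition \ref{lanadjoint} then provides an isomorphism $\Delta_{G,S}$ between these two Hom-complexes in $\mathrm{DgMod}_{str}(K)$, natural in $S$, whose ``unit'' is the degree-zero closed dg-natural transformation $\varphi\colon G\to(\mathrm{Lan}_{F}G)F$. The only thing left to verify is naturality in $G$: given a dg-natural transformation $\eta\colon G'\to G$, one must check that
$$\xymatrix{\mathrm{DgNat}(\mathrm{Lan}_{F}G,S)\ar[r]^{\Delta_{G,S}}\ar[d]_{(\mathbb{L}\mathrm{an}_{F}\eta)^{*}} & \mathrm{DgNat}(G,SF)\ar[d]^{\eta^{*}}\\ \mathrm{DgNat}(\mathrm{Lan}_{F}G',S)\ar[r]_{\Delta_{G',S}} & \mathrm{DgNat}(G',SF)}$$
commutes. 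Using the explicit description $\Delta_{G,S}(\alpha)=(\alpha\circ F)\circ\varphi$ from Remark \ref{descripdeltaespe} and the defining relation $\mathbb{L}\mathrm{an}_{F}(\eta)\circ\varphi'=\varphi\circ\eta$ from the construction in Proposition \ref{existeLan} (where $\varphi'\colon G'\to(\mathrm{Lan}_{F}G')F$), this is a short whiskering computation: for $\alpha\in\mathrm{DgNat}(\mathrm{Lan}_{F}G,S)$ one has $\eta^{*}\Delta_{G,S}(\alpha)=(\alpha\circ F)\circ\varphi\circ\eta=(\alpha\circ F)\circ\big((\mathbb{L}\mathrm{an}_{F}\eta)\circ F\big)\circ\varphi'=\big((\alpha\circ\mathbb{L}\mathrm{an}_{F}\eta)\circ F\big)\circ\varphi'=\Delta_{G',S}\big((\mathbb{L}\mathrm{an}_{F}\eta)^{*}\alpha\big)$. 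Thus $\Delta$ is an adjunction isomorphism natural in both variables, with unit $\varphi$, so $(\mathbb{L}\mathrm{an}_{F}(-),F_{\ast})$ is a dg-adjoint pair in the sense of Definition \ref{dgadjuncion}.

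Second, the pair $(F_{\ast},\mathbb{R}\mathrm{an}_{F}(-))$ is handled dually. For fixed $G\in\mathrm{DgMod}(\mathcal{A})$ and $S\in\mathrm{DgMod}(\mathcal{C})$, the dual statement in Proposition \ref{lanadjoint} gives an isomorphism in $\mathrm{DgMod}_{str}(K)$, $\mathrm{Hom}_{\mathrm{DgMod}(\mathcal{A})}(F_{\ast}(S),G)=\mathrm{DgNat}(SF,G)\simeq\mathrm{DgNat}(S,\mathrm{Ran}_{F}G)=\mathrm{Hom}_{\mathrm{DgMod}(\mathcal{C})}(S,\mathbb{R}\mathrm{an}_{F}(G))$, natural in $S$, with counit the degree-zero closed transformation $\psi\colon(\mathrm{Ran}_{F}G)F\to G$. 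Naturality in $G$ follows exactly as above, now invoking the functoriality of $\mathbb{R}\mathrm{an}_{F}(-)$ from Proposition \ref{existeRan} and the dual explicit formula for the isomorphism. This yields the dg-adjoint pair $(F_{\ast},\mathbb{R}\mathrm{an}_{F}(-))$ and hence the displayed triple of adjoints.

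I expect the main obstacle to be precisely the naturality in the extra variable $G$ (and in $S$ on the right-adjoint side), since Proposition \ref{lanadjoint} only records naturality in the functor being composed with $F$; but because $\mathbb{L}\mathrm{an}_{F}(-)$ and $\mathbb{R}\mathrm{an}_{F}(-)$ have already been shown to be dg-functors and the isomorphism has an explicit whiskering form, this reduces to a routine diagram chase. One should also keep track throughout that each isomorphism produced lives in $\mathrm{DgMod}_{str}(K)$, i.e.\ is degree zero and compatible with differentials; this is already guaranteed by the corresponding clause of Proposition \ref{lanadjoint}, so no additional argument is required.
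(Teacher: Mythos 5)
Your proposal is correct and follows the same route as the paper, which proves this proposition simply by invoking Definition \ref{dgadjuncion} together with Propositions \ref{existeLan}, \ref{existeRan} and \ref{lanadjoint}; you have merely made explicit the identification of the Hom-complexes and the naturality check in the variable $G$ that the paper leaves implicit. The whiskering computation you give, using the defining relation $\mathbb{L}\mathrm{an}_{F}(\eta)\circ\varphi'=\varphi\circ\eta$ from the construction of $\mathbb{L}\mathrm{an}_{F}(-)$, is exactly the right way to fill that gap.
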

\begin{proof}
It follows by Definition \ref{dgadjuncion} and Propositions   \ref{existeLan},  \ref{existeRan} and  \ref{lanadjoint}.
\end{proof}

\section{Recollement induced by a dg-ideal}
 
\begin{definition}
Let $\mathcal{C}$ be a dg-category. A $\textbf{differential graded ideal}$ $\mathbf{ \mathcal{I}}$ of $\mathcal{C}$ is a  differential graded subfunctor of $\mathrm{Hom}_{\mathcal{C}}(-,-)$. That is, $\mathcal{I}$ is a class of  morphism in $\mathcal{C}$ such that:
\begin{enumerate}
\item[(a)] $\mathcal{I}(A,B)= \mathrm{Hom}_{\mathcal{C}}(A,B) \cap \mathcal{I}$ is a dg-$K$-submodule of $\mathrm{Hom}_{\mathcal{C}}(A,B)$ for each $A, B \in \mathcal{C}$;
\item[(b)] If $f \in \mathcal{I}(A,B)$, $g \in \mathrm{Hom}_{\mathcal{C}}(C,A)$ and $h \in \mathrm{Hom}_{\mathcal{C}}(B,D)$, then $h\circ f \circ g \in \mathcal{I}(C,D)$.
\end{enumerate}
\end{definition}

\begin{example}
Let $(A,d)$  be a differential graded $K$-algebra  and we consider the dg category $\mathcal{C}_{A}$ associated with the  $K$-algebra. Then  $\mathcal{I}$ is a dg-ideal of $A$ if and only if $\mathcal{I}$ is a dg-ideal of the  dg-category $\mathcal{C}_{A}$. 
\end{example}

Now, we recall the construction of the quotient category.
Let $\mathcal{I}$ be a dg-ideal in dg-category $\mathcal{C}$. The $\textbf{quotient category}$ $\mathcal{C}/\mathcal{I}$ is defined as follows: it has the same objects as $\mathcal{C}$ and $\mathrm{Hom}_{\mathcal{C}/\mathcal{I}}(A,B):=\frac{\mathrm{Hom}_{\mathcal{C}}(A,B)}{\mathcal{I}(A,B)}$ for each $A,B\in \mathcal{C}/\mathcal{I}$.\\
For $\overline{f}=f+\mathcal{I}(A,B)\in \mathrm{Hom}_{\mathcal{C}/\mathcal{I}}(A,B)$ and $\overline{g}=g+\mathcal{I}(B,C)\in \mathrm{Hom}_{\mathcal{C}/\mathcal{I}}(B,C)$ we set
$$\overline{g}\circ \overline{f}:=gf+\mathcal{I}(A,C)\in \mathrm{Hom}_{\mathcal{C}/\mathcal{I}}(A,C).$$ 
Let  $\mathcal{I}$ be a dg-ideal of $\mathcal{C}$, we have the canonical functor $\pi:\mathcal{C}\longrightarrow \mathcal{C}/\mathcal{I}$ defined as:  $\pi(A)=A$ $\forall A\in \mathcal{C}$ and $\pi(f):=\overline{f}=f+\mathcal{I}(A,B)\in \mathrm{Hom}_{\mathcal{C}/\mathcal{I}}(A,B)$  $\forall f\in \mathrm{Hom}_{\mathcal{C}}(A,B)$.

\begin{proposition}
Let $\mathcal{C}$ be a  dg-category and $\mathcal{I}$ a  differential graded ideal of $\mathcal{C}$. Then $\mathcal{C}/\mathcal{I}$ is a dg-category and $\pi:\mathcal{C}\longrightarrow \mathcal{C}/\mathcal{I}$ is a dg-functor.
\end{proposition}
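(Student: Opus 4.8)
The statement is essentially a routine verification, and the plan is to check four things in turn: that $\mathcal{C}/\mathcal{I}$ is a well-defined $K$-category, that each morphism space $\mathrm{Hom}_{\mathcal{C}/\mathcal{I}}(A,B)$ inherits a differential graded $K$-module structure, that the composition maps are morphisms of degree zero in $\mathrm{DgMod}(K)$ commuting with the differentials, and finally that $\pi$ preserves all of this structure.

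First I would recall why $\mathcal{C}/\mathcal{I}$ is a $K$-category. The formula $\overline{g}\circ\overline{f}:=\overline{gf}$ does not depend on the chosen representatives: if $f'=f+i$ with $i\in\mathcal{I}(A,B)$ and $g'=g+j$ with $j\in\mathcal{I}(B,C)$, then $g'f'-gf=gj+if+ij$, and each summand lies in $\mathcal{I}(A,C)$ by part (b) of the definition of a dg-ideal (closure under pre- and post-composition). Associativity and the identities $\overline{1_{A}}$ descend directly from $\mathcal{C}$, and $K$-bilinearity of the induced composition descends because each $\mathcal{I}(A,B)$ is a $K$-submodule.

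Next, for the dg-structure: since $\mathcal{I}(A,B)$ is a graded $K$-submodule of $\mathrm{Hom}_{\mathcal{C}}(A,B)$, the quotient is graded with $\mathrm{Hom}^{n}_{\mathcal{C}/\mathcal{I}}(A,B)=\mathrm{Hom}^{n}_{\mathcal{C}}(A,B)/\mathcal{I}^{n}(A,B)$; and since $\mathcal{I}(A,B)$ is moreover a \emph{dg}-$K$-submodule, i.e.\ $d_{\mathrm{Hom}_{\mathcal{C}}(A,B)}(\mathcal{I}(A,B))\subseteq \mathcal{I}(A,B)$, the differential $d_{\mathrm{Hom}_{\mathcal{C}}(A,B)}$ descends to a well-defined degree $+1$ map $\overline{d}$ on the quotient with $\overline{d}\circ\overline{d}=0$. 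This is the one place where the full force of condition (a) in the definition of a dg-ideal (dg-submodule, not merely a graded submodule) is needed, and I regard it as the only genuinely load-bearing point of the argument. Then I would observe that the induced composition $\overline{\theta}_{X,Y,Z}$ is, by construction, the unique factorization of $\theta_{X,Y,Z}$ through the canonical surjections $\mathrm{Hom}_{\mathcal{C}}\twoheadrightarrow\mathrm{Hom}_{\mathcal{C}/\mathcal{I}}$; being induced from a degree-zero $K$-linear map, $\overline{\theta}_{X,Y,Z}$ has degree zero, and for homogeneous $\overline{g},\overline{f}$ one computes, using that $\theta$ commutes with differentials in $\mathcal{C}$ (equivalently the Leibniz rule $d(g\circ f)=d(g)\circ f+(-1)^{|g|}g\circ d(f)$ coming from \ref{tendgestructure} and the definition of a dg-category),
\[
\overline{d}(\overline{g}\circ\overline{f})=\overline{d(g\circ f)}=\overline{d(g)\circ f}+(-1)^{|g|}\overline{g\circ d(f)}=\overline{d}(\overline{g})\circ\overline{f}+(-1)^{|\overline{g}|}\,\overline{g}\circ\overline{d}(\overline{f}),
\]
which says exactly that $\overline{\theta}_{X,Y,Z}$ commutes with the differentials. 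Hence $\mathcal{C}/\mathcal{I}$ is a dg-category.

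Finally, for $\pi\colon\mathcal{C}\longrightarrow\mathcal{C}/\mathcal{I}$: it is $K$-linear and functorial directly from the definition of the quotient, it carries $\mathrm{Hom}^{n}_{\mathcal{C}}(A,B)$ into $\mathrm{Hom}^{n}_{\mathcal{C}/\mathcal{I}}(A,B)$ by the description of the grading above, so it is a graded functor, and $\pi(d_{\mathcal{C}}(\alpha))=\overline{d_{\mathcal{C}}(\alpha)}=\overline{d}(\overline{\alpha})=d_{\mathcal{C}/\mathcal{I}}(\pi(\alpha))$ for every homogeneous $\alpha$, so $\pi$ commutes with the differentials. Therefore $\pi$ is a dg-functor, and no step presents a real obstacle beyond the bookkeeping already indicated.
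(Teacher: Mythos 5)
Your argument is correct and follows essentially the same route as the paper: descend the differential to each quotient $\mathrm{Hom}_{\mathcal{C}/\mathcal{I}}(A,B)$ using that $\mathcal{I}(A,B)$ is a dg-$K$-submodule, verify the Leibniz rule for the induced composition on homogeneous classes, and read off that $\pi$ is a graded functor commuting with the differentials. The only difference is that you also spell out the well-definedness of the induced composition via condition (b) of the ideal, which the paper leaves implicit.
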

\begin{proof}
Since $\mathcal{I}(A,B)$ is a dg $K$-submodule of  $\mathrm{Hom}_{\mathcal{C}}(A,B)$ we get that
$$\mathrm{Hom}_{\mathcal{C}/ \mathcal{I}}(A,B)=\frac{\mathrm{Hom}_{\mathcal{C}}(A,B)}{\mathcal{I}(A,B)}= \bigoplus_{n \in \mathbb{Z}} \Bigg(\frac{\mathrm{Hom}_{\mathcal{C}}(A,B)}{\mathcal{I}(A,B)}\Bigg)^{n}$$
where $\Big(\frac{\mathrm{Hom}_{\mathcal{C}}(A,B)}{\mathcal{I}(A,B)}\Big)^{n}= \pi (\mathrm{Hom}_{\mathcal{C}}^{n}(A,B))$
 with $\pi:\mathrm{Hom}_{\mathcal{C}}(A,B) \longrightarrow \frac{\mathrm{Hom}_{\mathcal{C}}(A,B)}{\mathcal{I}(A,B)}$  the canonical projection. Since $\mathcal{I}(A,B)$ is a dg-$K$-submodule of $\mathrm{Hom}_{\mathcal{C}}(A,B)$ we have that $d_{\mathcal{I}(A,B)}=d_{\mathcal{C}(A,B)}|_{\mathcal{I}(A,B)}$. Then we have that there exists a unique morphism
 $$d_{\frac{\mathcal{C}(A,B)}{\mathcal{I}(A,B)}}:\frac{\mathcal{C}(A,B)}{\mathcal{I}(A,B)} \longrightarrow \frac{\mathcal{C}(A,B)}{\mathcal{I}(A,B)}$$
such that the following diagram commutes 
\begin{equation}\label{difencociente}
\xymatrix{ 0\ar[r] & \mathcal{I}(A,B)\ar[r]\ar[d]^{d_{\mathcal{I}(A,B)}}  & \mathcal{C}(A,B)\ar[r]\ar[d]^{d_{\mathcal{C}(A,B)}} &  \frac{\mathcal{C}(A,B)}{\mathcal{I}(A,B)}\ar[r]\ar[d]^{d_{\frac{\mathcal{C}(A,B)}{\mathcal{I}(A,B)}}} & 0\\
0\ar[r] & \mathcal{I}(A,B)\ar[r]  & \mathcal{C}(A,B)\ar[r] &  \frac{\mathcal{C}(A,B)}{\mathcal{I}(A,B)}\ar[r] & 0.}
\end{equation}
Since $d_{\mathcal{C}(A,B)}\circ d_{\mathcal{C}(A,B)}=0$ we conclude that $d_{\frac{\mathcal{C}(A,B)}{\mathcal{I}(A,B)}}\circ d_{\frac{\mathcal{C}(A,B)}{\mathcal{I}(A,B)}}=0$. This tell us that 
$$d_{\frac{\mathcal{C}(A,B)}{\mathcal{I}(A,B)}}(\overline{f}):=\overline{(d_{\mathcal{C}(A,B)}(f))}$$
for each $\overline{f} \in  (\mathcal{C}/\mathcal{I})(A,B)$. Now, let $\overline{f}=f+\mathcal{I}(A,B)\in \mathrm{Hom}_{\mathcal{C}/\mathcal{I}}(A,B)$ and $\overline{g}=g+\mathcal{I}(B,C)\in \mathrm{Hom}_{\mathcal{C}/\mathcal{I}}(B,C)$ homogenous elements. In the following, by abuse of notation the letter $d$ will denote the different differentials in different hom-spaces. Then we have that
\begin{align*}
d(\overline{g}\circ \overline{f})=d(\overline{g\circ f})  =\overline{d(g\circ f)}
& =\overline{d(g)\circ f+(-1)^{|g|}g\circ d(f)}\\
& =\overline{d(g)\circ f}+(-1)^{|g|}\overline{g\circ d(f)}\\
& =\overline{d(g)}\circ\overline{f}+(-1)^{|g|}\overline{g}\circ\overline{d(f)}\\
& =d(\overline{g})\circ\overline{f}+(-1)^{|\overline{g}|}\overline{g}\circ d(\overline{f}).
\end{align*}
This implies that the composition $-\circ-:\frac{\mathcal{C}}{\mathcal{I}}(B,C)\otimes \frac{\mathcal{C}}{\mathcal{I}}(A,B)\longrightarrow \frac{\mathcal{C}}{\mathcal{I}}(A,C)$
is a morphism of degree zero in $\mathrm{DgMod}(K)$ which commutes with the differentials. Hence we conclude that $\mathcal{C}/\mathcal{I}$ is a dg-category. Now, since  $\Big(\frac{\mathrm{Hom}_{\mathcal{C}}(A,B)}{\mathcal{I}(A,B)}\Big)^{n}=\pi (\mathrm{Hom}_{\mathcal{C}}^{n}(A,B))$ and by diagram \ref{difencociente} we conclude that $\pi$ is a dg-functor.
\end{proof}

Let $\mathcal{C}$ be a dg-category. A full subcategory $\mathcal{B}$ of $\mathcal{C}$ is a dg-category and in this case we say that $\mathcal{B}$ is a full differential graded subcategory of $\mathcal{C}$.
Let $\mathcal{B}$  be a full differential graded subcategory of a dg-category $\mathcal{C}$  and consider  the following subset of  $\mathrm{Hom}_{\mathcal{C}}(C,C')^{n}$:
$$\mathcal{I}_{\mathcal{B}}(C,C')^{n}:= \left\{ f \in \mathcal{C}(C,C')^{n} \big\vert \exists \,\, B \in \mathcal{B},\,\, g \in \mathcal{C}(C,B),\,\, h\in \mathcal{C}(B,C')\,\,\text{with}\,\,f= h\circ  g \right\}.$$
That is,  $I_{\mathcal{B}}(C,C')^{n}$ consists of the set of homogenous morphisms that can be factorized through an object in $\mathcal{B}$.
We define 
$$\mathcal{I}_{\mathcal{B}}(C,C'):=\bigoplus_{n\in \mathbb{Z}}\mathcal{I}_{\mathcal{B}}(C,C')^{n}.$$

\begin{lemma}\label{I B ideal dg }
Let $\mathcal{C}$ be a dg-category with finite coproducts  and $\mathcal{B}$ a full differential graded subcategory  closed under coproducts. Then  $\mathcal{I}_{\mathcal{B}}$ is a  differential graded ideal of $\mathcal{C}$.
\end{lemma}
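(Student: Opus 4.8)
The plan is to verify directly the two conditions in the definition of a differential graded ideal; the only point that is not completely routine is closure of $\mathcal{I}_{\mathcal{B}}$ under addition, and this is precisely where the coproduct hypotheses are used.

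First I would show that each $\mathcal{I}_{\mathcal{B}}(C,C')^{n}$ is a $K$-submodule of $\mathcal{C}(C,C')^{n}$. Stability under scalars is immediate, since $f=h\circ g$ factoring through $B\in\mathcal{B}$ gives $\lambda f=h\circ(\lambda g)$ for $\lambda\in K$. For stability under addition, suppose $f_{i}=h_{i}\circ g_{i}$ factors through $B_{i}\in\mathcal{B}$ for $i=1,2$. Since $\mathcal{C}$ is $K$-linear with finite coproducts, $B_{1}\oplus B_{2}$ is a biproduct with degree-zero closed structure maps $\iota_{i},p_{i}$ satisfying $p_{i}\iota_{j}=\delta_{ij}1$ and $\iota_{1}p_{1}+\iota_{2}p_{2}=1$; moreover $B_{1}\oplus B_{2}\in\mathcal{B}$ because $\mathcal{B}$ is closed under coproducts. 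Setting $g:=\iota_{1}g_{1}+\iota_{2}g_{2}$ and $h:=h_{1}p_{1}+h_{2}p_{2}$, the cross terms in $h\circ g$ vanish and $h\circ g=f_{1}+f_{2}$, so $f_{1}+f_{2}\in\mathcal{I}_{\mathcal{B}}(C,C')^{n}$. It is worth recording the consequence, used below, that every homogeneous element of $\mathcal{I}_{\mathcal{B}}(C,C')$ factors through a single object of $\mathcal{B}$.

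Next, to complete condition (a) I would check that $\mathcal{I}_{\mathcal{B}}(C,C')$ is stable under the differential $d$ of $\mathcal{C}(C,C')$. Because composition in a dg-category commutes with the differential, Equation \eqref{tendgestructure} yields the Leibniz rule $d(h\circ g)=d(h)\circ g+(-1)^{|h|}h\circ d(g)$. Hence if $f=h\circ g$ factors through $B\in\mathcal{B}$, then so do $d(h)\circ g$ and $h\circ d(g)$; by the submodule property just proved, $d(f)\in\mathcal{I}_{\mathcal{B}}(C,C')$, and $K$-linearity of $d$ extends this to all of $\mathcal{I}_{\mathcal{B}}(C,C')$.

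Finally, for condition (b), given $f\in\mathcal{I}_{\mathcal{B}}(A,B)$, $g\in\mathcal{C}(C,A)$ and $h\in\mathcal{C}(B,D)$, the map $h\circ(-)\circ g$ is $K$-linear, so it suffices to treat $f$ homogeneous; then $f=f_{2}\circ f_{1}$ with $f_{1}\in\mathcal{C}(A,B_{0})$, $f_{2}\in\mathcal{C}(B_{0},B)$ and $B_{0}\in\mathcal{B}$, whence $h\circ f\circ g=(h\circ f_{2})\circ(f_{1}\circ g)$ factors through $B_{0}\in\mathcal{B}$, i.e.\ lies in $\mathcal{I}_{\mathcal{B}}(C,D)$. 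This establishes both conditions, so $\mathcal{I}_{\mathcal{B}}$ is a differential graded ideal. The main (indeed the only) obstacle is the additivity step, which genuinely fails without the assumption that $\mathcal{B}$ be closed under coproducts; the remaining verifications are routine bookkeeping of degrees and signs.
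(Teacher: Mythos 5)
Your proof is correct and follows essentially the same route as the paper: the same biproduct trick through $B_{1}\oplus B_{2}$ for additivity, the same Leibniz-rule argument for stability under the differential, and the same one-line verification of the two-sided ideal property. The only place the paper is more careful is in the differential step: since the factorization $f=h\circ g$ in the definition does not require $g$ and $h$ to be homogeneous, one should first decompose them into homogeneous components before invoking the signed Leibniz rule, but this is exactly the reduction your submodule property already supports.
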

\begin{proof}
Let us see that $\mathcal{I}_{\mathcal{B}}(C,C')$ is an abelian subgroup of $\mathrm{Hom}_{\mathcal{C}}(C,C')$. Indeed, let $f_{1},f_{2}\in \mathcal{I}_{\mathcal{B}}(C,C')^{n}$.
Then there exist, $B_{1},B_{2}\in \mathcal{B}$ and morphisms $g_{1}:C\longrightarrow B_{1}$, $h_{1}:B_{1}\longrightarrow C'$, $g_{2}:C\longrightarrow B_{2}$, $h_{2}:B_{2}\longrightarrow C'$ such that
$f_{1}=h_{1}g_{1}$ and $f_{2}=h_{2}g_{2}$.
Consider the following diagram
$$\xymatrix{C\ar[rr]\ar[dr]_{a} & & C'\\
& B_{1}\oplus B_{2}\ar[ur]_{b}}$$
with $a=\left [\begin{smallmatrix} g_{1}
\\ -g_{2} \end{smallmatrix} \right ]$ and 
$b=\left [\begin{smallmatrix}  h_{1} & h_{2}\end{smallmatrix} \right ]$. Hence
$b\circ a=\left [\begin{smallmatrix}  h_{1} & h_{2}\end{smallmatrix} \right ]\left [\begin{smallmatrix} g_{1}
\\ -g_{2} \end{smallmatrix} \right ]=h_{1}g_{1}-h_{2}g_{2}=f_{1}-f_{2}$.
Since $B_{1}\oplus B_{2}\in \mathcal{B}$, we conclude that $f_{1}-f_{2}\in  \mathcal{I}_{\mathcal{B}}(C,C')^{n}$.\\
Now, if $f_{1}\in \mathcal{I}_{\mathcal{B}}(C,C')^{n}$ and let $f_{2}\in \mathcal{I}_{\mathcal{B}}(C,C')^{m}$ with $n\neq m$, we have that
$f_{1}-f_{2}\in \mathcal{I}_{\mathcal{B}}(C,C'):=\bigoplus_{n\in \mathbb{Z}}\mathcal{I}_{\mathcal{B}}(C,C')^{n}.$ We conclude that $\mathcal{I}_{\mathcal{B}}(C,C')$ is an abelian subgroup of $\mathrm{Hom}_{\mathcal{C}}(C,C')$.
Now, let $\lambda\in K$ and $f_{1}\in \mathcal{I}_{\mathcal{B}}(C,C')^{n}$. It is easy to see that $\lambda f_{1}\in \mathcal{I}_{\mathcal{B}}(C,C')^{n}$. Proving that
$\mathcal{I}_{\mathcal{B}}(C,C')$ is a graded $K$-submodule of $\mathrm{Hom}_{\mathcal{C}}(C,C')$.\\
Now, let us see that the differential $d_{\mathrm{Hom}_{\mathcal{C}}(C,C')}:\mathrm{Hom}_{\mathcal{C}}(C,C') \longrightarrow \mathrm{Hom}_{\mathcal{C}}(C,C')$ restricts  to the $K$-submodule $\mathcal{I}_{\mathcal{B}}(C,C')$. In order to simplify the notation we set $d:=d_{\mathrm{Hom}_{\mathcal{C}}(C,C')}$.\\
Since $d$ is a morphism of abelian groups and $\mathcal{I}_{\mathcal{B}}(C,C')$ is a subgroup of $\mathrm{Hom}_{\mathcal{C}}(C,C')$ it is enough to see that if $f \in \mathcal{I}_{\mathcal{B}}(C,C')^{n}$ then $d(f)\in \mathcal{I}_{\mathcal{B}}(C,C')$. Indeed, 
let $f \in \mathcal{I}_{\mathcal{B}}(C,C')^{n}$ be, then there exist, $B\in \mathcal{B}$ and  $g:C\longrightarrow B$, $h:B\longrightarrow C'$ such that
$f=hg$.\\
We have that $h=\sum_{i=1}^{m}h_{i}$ with $h_{i}\in \mathrm{Hom}_{\mathcal{C}}^{\alpha_{i}}(B,C')$ and $g=\sum_{j=1}^{r}g_{j}$ with $g_{i}\in \mathrm{Hom}^{\beta_{i}}_{\mathcal{C}}(C,B)$. Hence
$$f=(\sum_{i=1}^{m}h_{i})\circ (\sum_{j=1}^{r}g_{j})=\sum_{i,j} h_{i}g_{j}$$
with $h_{i}g_{j}\in \mathrm{Hom}^{\alpha_{i}+\beta_{j}}_{\mathcal{C}}(C,C')$.  By grouping all the terms that have the same grade we can write
$f= \sum_{l=1}^{p}f_{l}$ where $f_{l}\in \mathrm{Hom}^{\gamma_{l}}_{\mathcal{C}}(C,C')$ and
$f_{l}:=\sum_{\alpha_{i}+\beta_{j}=\gamma_{l}} h_{i}g_{j}$  and $\gamma_{l}\neq \gamma_{l'}$ if $l\neq l'$. Since $\mathrm{Hom}_{\mathcal{C}}(C,C')=\bigoplus_{n\in \mathbb{Z}}\mathrm{Hom}^{n}_{\mathcal{C}}(C,C')$, we conclude that $f_{l}= 0$ if $\gamma_{l}\neq n$. Hence
$$f=\sum_{\alpha_{i}+\beta_{j}=n}h_{i}g_{j}.$$
In order to see that $d(f)\in \mathcal{I}_{\mathcal{B}}(C,C'):=\bigoplus_{n\in \mathbb{Z}}\mathcal{I}_{\mathcal{B}}(C,C')^{n}$ it is enough to see that
$d( h_{i}g_{j})\in \mathcal{I}_{\mathcal{B}}(C,C'):=\bigoplus_{n\in \mathbb{Z}}\mathcal{I}_{\mathcal{B}}(C,C')^{n}$ if $\alpha_{i}+\beta_{j}=n$. Indeed, 

 \begin{align*}
 d_{\mathrm{Hom}_{\mathcal{C}}(C,C')}(h_{i}g_{j}) & = d_{\mathrm{Hom}_{\mathcal{C}}(B,C')}(h_{i})\circ g_{j} +(-1)^{|h_{i}|}h_{i}\circ d_{\mathrm{Hom}_{\mathcal{C}}(C,B)}(g_{j}).
 \end{align*}
 We have that $d_{\mathrm{Hom}_{\mathcal{C}}(B,C')}(h_{i})\circ g_{j}$, $(-1)^{|h_{i}|}h_{i} \circ d_{\mathrm{Hom}_{\mathcal{C}}(C,B)}(g_{j})$ are morphisms  that  factors  through  $B \in \mathcal{B}$. Thus, $d_{\mathrm{Hom}_{\mathcal{C}}(B,C')}(h_{i})\circ g_{j}$, $(-1)^{|h_{i}|}h_{i} \circ d_{\mathrm{Hom}_{\mathcal{C}}(C,B)}(g_{j})$   are elements of  
$\mathcal{I}_{\mathcal{B}}(C,C')$  and we conclude that  $d_{\mathrm{Hom}_{\mathcal{C}}(C,C')}(h_{i}g_{j})\in \mathcal{I}_{\mathcal{B}}(C,C')$.\\
Then, we have the differential $d_{\mathrm{Hom}_{\mathcal{C}}(C,C')}:\mathrm{Hom}_{\mathcal{C}}(C,C') \longrightarrow \mathrm{Hom}_{\mathcal{C}}(C,C')$ restricts  to the $K$-submodule $\mathcal{I}_{\mathcal{B}}(C,C')$ and hence $\mathcal{I}_{\mathcal{B}}(C,C')$ is a dg-$K$-submodule of $\mathrm{Hom}_{\mathcal{C}}(C,C')$.\\
Let $f\in \mathcal{I}_{\mathcal{B}}(C,C')^{n}$ and $g\in \mathcal{C}(A,C)^{p}$  and
$h\in \mathcal{C}(C',B)^{q}$. 
We conclude that $hfg\in \mathcal{I}_{\mathcal{B}}(A,B)^{n+p+q}$. Since every element of $\mathrm{Hom}_{\mathcal{C}}(X,Y)$ is a sum of homogenous elements for all $X,Y\in \mathcal{C}$, we conclude that  If $f \in \mathcal{I}_{\mathcal{B}}(A,B)$, $g \in \mathrm{Hom}_{\mathcal{C}}(C,A)$ and $h \in \mathrm{Hom}_{\mathcal{C}}(B,D)$, then $h\circ f \circ g \in \mathcal{I}_{\mathcal{B}}(C,D)$, proving that $\mathcal{I}_{\mathcal{B}}$ is a dg-ideal of $\mathcal{C}$.
\end{proof}
Recall that given a functor $F:\mathcal{A}\longrightarrow \mathcal{B}$ between arbitrary categories  the $\textbf{essential}$ $\textbf{image}$ of $F$ denoted by $\mathrm{Im}(F)$ is the full subcategory of $\mathcal{B}$ whose objects are defined as: $\mathrm{Im}(F):=\{B\in \mathcal{B}\mid \text{exist}\,\,A\in \mathcal{A}\,\,\text{such that}\,\, F(A)\simeq B\}.$
If $\mathcal{B}$ has zero object,  we define the $\textbf{kernel}$ of $F$ as the full subcategory of $\mathcal{A}$ whose objects are given as follows $\mathrm{Ker}(F):=\{A\in \mathcal{A}\mid F(A)\simeq 0\}$.\\
Let $\mathcal{C}$ be a dg-category with finite coproducts  and $\mathcal{B}$ a full differential graded subcategory  closed under coproducts.
We have the canonical functor of dg-categories $\pi:\mathcal{C}\longrightarrow \mathcal{C}/\mathcal{I}_{\mathcal{B}}.$ Hence we have the induced dg-functor $\pi_{\ast}:\mathrm{DgMod}(\mathcal{C}/\mathcal{I}_{\mathcal{B}})\longrightarrow \mathrm{DgMod}(\mathcal{C}).$ Let $j:\mathcal{B}\longrightarrow \mathcal{C}$ be inclusion and consider the induced $j_{\ast}: \mathrm{DgMod}(\mathcal{C})\longrightarrow \mathrm{DgMod}(\mathcal{B}).$ 

\begin{proposition}\label{Ker=Im}
We have that $\mathrm{Ker}(j_{\ast})=\mathrm{Im}(\pi_{\ast})$.
\end{proposition}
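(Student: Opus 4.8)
The plan is to prove the two inclusions separately, after unwinding what $\mathrm{Ker}(j_{\ast})$ and $\mathrm{Im}(\pi_{\ast})$ mean. Note first that an isomorphism in a dg-category is a strict isomorphism, and the only object of $\mathrm{DgMod}(K)$ isomorphic to $0$ is $0$ itself; hence the condition $j_{\ast}(M)\simeq 0$ on $M\in\mathrm{DgMod}(\mathcal{C})$ is equivalent to $M(B)=0$ for every $B\in\mathcal{B}$. Thus $\mathrm{Ker}(j_{\ast})$ is exactly the class of dg-modules vanishing on $\mathcal{B}$, which reduces the proposition to a concrete statement; in particular no adjunction machinery from Section 3 is needed.

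For $\mathrm{Im}(\pi_{\ast})\subseteq\mathrm{Ker}(j_{\ast})$: for $B\in\mathcal{B}$ the identity factors as $1_{B}=1_{B}\circ 1_{B}$ through $B\in\mathcal{B}$, so $1_{B}\in\mathcal{I}_{\mathcal{B}}(B,B)$ and therefore $1_{\pi(B)}=\overline{1_{B}}=0$ in $\mathcal{C}/\mathcal{I}_{\mathcal{B}}$; that is, $\pi(B)$ is a zero object of $\mathcal{C}/\mathcal{I}_{\mathcal{B}}$. Any dg-functor is $K$-linear and preserves identities, so for $S\in\mathrm{DgMod}(\mathcal{C}/\mathcal{I}_{\mathcal{B}})$ the object $S(\pi(B))$ has vanishing identity, forcing $S(\pi(B))=0$. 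Hence if $M\simeq\pi_{\ast}(S)=S\circ\pi$ then $M(B)\simeq S(\pi(B))=0$, so $M\in\mathrm{Ker}(j_{\ast})$.

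For $\mathrm{Ker}(j_{\ast})\subseteq\mathrm{Im}(\pi_{\ast})$: let $M\in\mathrm{DgMod}(\mathcal{C})$ with $M(B)=0$ for all $B\in\mathcal{B}$. The first step is to check that $M$ kills $\mathcal{I}_{\mathcal{B}}$: by definition $\mathcal{I}_{\mathcal{B}}(C,C')=\bigoplus_{n}\mathcal{I}_{\mathcal{B}}(C,C')^{n}$, so every $f\in\mathcal{I}_{\mathcal{B}}(C,C')$ is a finite sum of homogeneous morphisms each of the form $h_{l}\circ g_{l}$ with $g_{l}\colon C\to B_{l}$, $h_{l}\colon B_{l}\to C'$ and $B_{l}\in\mathcal{B}$; then $M(h_{l}\circ g_{l})=M(h_{l})\circ M(g_{l})$ factors through $M(B_{l})=0$, whence $M(f)=0$. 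Consequently one can define $S$ on $\mathcal{C}/\mathcal{I}_{\mathcal{B}}$ by $S(C):=M(C)$ on objects and $S(\overline{f}):=M(f)$ on morphisms, and this is well defined. The second step is to verify that $S$ is a dg-functor: $K$-linearity, compatibility with composition and units, the grading condition (using $\mathrm{Hom}^{n}_{\mathcal{C}/\mathcal{I}_{\mathcal{B}}}(C,C')=\pi(\mathrm{Hom}^{n}_{\mathcal{C}}(C,C'))$), and commutation with the differential (using $d(\overline{f})=\overline{d(f)}$) all follow immediately from the corresponding properties of the dg-functor $M$, since the composition, grading and differential on $\mathcal{C}/\mathcal{I}_{\mathcal{B}}$ are those induced from $\mathcal{C}$ via $\pi$. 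Finally $S\circ\pi=M$ holds by construction on objects and morphisms, so $M=\pi_{\ast}(S)\in\mathrm{Im}(\pi_{\ast})$.

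The only point that requires care is the well-definedness of $S$ on morphisms, i.e. that $M$ annihilates $\mathcal{I}_{\mathcal{B}}$; this rests on the explicit description of $\mathcal{I}_{\mathcal{B}}$ from Lemma \ref{I B ideal dg } (a homogeneous element of it factors through an object of $\mathcal{B}$) together with $M$ being a graded functor and $M(B)=0$. Everything else is the routine check that a functor defined on representatives descends to the quotient and inherits the dg-structure, and the bookkeeping around interpreting $\mathrm{Ker}$ and $\mathrm{Im}$ as honest equalities of classes of objects rather than merely up to isomorphism.
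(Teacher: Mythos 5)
Your proof is correct and follows essentially the same route as the paper: for one inclusion you observe that $1_{B}$ factors through $B\in\mathcal{B}$, so $\overline{1_{B}}=0$ in $\mathcal{C}/\mathcal{I}_{\mathcal{B}}$ and hence any $S\circ\pi$ vanishes on $\mathcal{B}$; for the other you check that a module vanishing on $\mathcal{B}$ annihilates $\mathcal{I}_{\mathcal{B}}$ and therefore descends to a dg-functor on the quotient. The paper's proof is the same argument, with the descended functor's compatibility with gradings and differentials verified via the induced differential $d(\overline{f})=\overline{d(f)}$ exactly as you indicate.
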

\begin{proof}
Let $F\in \mathrm{DgMod}(\mathcal{C})$ such that $j_{\ast}(F)=0$. That is, 
$F(B)=0$ for all $B\in \mathcal{B}$. Then we have that $F(f)=0$ for all $f\in \mathcal{I}_{B}(C,C')$. Then there exists a morphism $\overline{F}_{C,C'}$
 such that the following diagram commutes in $\mathrm{DgMod}_{Str}(K)$:
$$\xymatrix{0\ar[r] & \mathcal{I}_{\mathcal{B}}(C,C')\ar[r] & \mathcal{C}(C,C')\ar[r]\ar[dr]_{F_{C,C'}} & \frac{\mathcal{C}(C,C')}{\mathcal{I}_{\mathcal{B}}(C,C')}\ar[r]\ar[d]^{\overline{F}_{C,C'}} & 0\\
 & & & [F(C),F(C')]}$$
Then we define a functor $\overline{F}:\mathcal{C}/\mathcal{I}_{\mathcal{B}}\longrightarrow \mathrm{DgMod}(K)$ as follows:
\begin{enumerate}
\item [(a)] $\overline{F}(C):=F(C)$ for all $C\in \mathcal{C}/\mathcal{I}_{\mathcal{B}}$
\item [(b)] If $\overline{f}=f+\mathcal{I}_{\mathcal{B}}(C,C')\in \mathcal{C}/\mathcal{I}_{\mathcal{B}}(C,C')$ we set $\overline{F}(\overline{f})=\overline{F}_{C,C'}(\overline{f})=F(f).$
\end{enumerate}
We also have that if $\overline{f}\in  (\mathcal{C}/\mathcal{I}_{\mathcal{B}}(C,C'))^{n}$ then $\overline{F}(\overline{f})\in [\overline{F}(C),\overline{F}(C')]^{n}$.
Since $F$ is a dg-functor we have that $F$ commutes with differentials. Let us see that the following diagram commutes for homogenous elements
\begin{equation}
\xymatrix{\frac{\mathcal{C}(C,C')}{\mathcal{I}_{\mathcal{B}}(C,C')}\ar[r]^{\overline{F}}\ar[d]^{d_{\mathcal{C}/\mathcal{I}}} & [F(C),F(C')]\ar[d]^{D}\\
\frac{\mathcal{C}(C,C')}{\mathcal{I}_{\mathcal{B}}(C,C')}\ar[r]_{\overline{F}} & [F(C),F(C')]}
\end{equation}
Indeed, let $\overline{f}=f+\mathcal{I}_{\mathcal{B}}(C,C')\in \Big(\frac{\mathcal{C}(C,C')}{\mathcal{I}_{\mathcal{B}}(C,C')}\Big)^{n}=\mathcal{C}(C,C')^{n}+\mathcal{I}_{\mathcal{B}}(C,C')$ with $f\in \mathcal{C}(C,C')^{n}$. Then 
\begin{align*}
D\Big(\overline{F}(\overline{f})\Big) =D(F(f)) =F(d_{\mathcal{C}}(f)) =\overline{F}(\overline{d_{\mathcal{C}}(f)}) =\overline{F}(d_{\mathcal{C}/\mathcal{I}}(\overline{f})).
\end{align*}
Then the required diagram commutes. Then we have that $\overline{F}$ is a dg-functor and $F=\overline{F}\circ \pi=\pi_{\ast}(\overline{F})$. Proving that $\mathrm{Ker}(j_{\ast})\subseteq \mathrm{Im}(\pi_{\ast})$.\\
Let  $M\in \mathrm{DgMod}(\mathcal{C}/\mathcal{I}_{\mathcal{B}})$. Then $\pi_{\ast}(M):=M\circ \pi\in \mathrm{Mod}(\mathcal{C})$ and for $1_{B}$  with $B\in \mathcal{B}$ have that $1_{(M\pi)(B)}=(M\circ \pi)(1_{B})=M(\overline{1_{\mathcal{B}}})=M(0)=0$. This proves that $(M\pi)(B)=0$ and hence $\pi_{\ast}(M)\in \mathrm{Ker}(j_{\ast})$, proving that $\mathrm{Im}(\pi_{\ast})\subseteq  \mathrm{Ker}(j_{\ast})$.
\end{proof}

\begin{proposition}\label{inducedgfull}
Let $\mathcal{C}$ be a dg-category with finite coproducts  and $\mathcal{B}$ a full differential graded subcategory  closed under coproducts. Let $\pi:\mathcal{C}\longrightarrow \mathcal{C}/\mathcal{I}_{\mathcal{B}}$. Then $\pi_{\ast}:\mathrm{DgMod}(\mathcal{C}/\mathcal{I}_{\mathcal{B}})\longrightarrow \mathrm{DgMod}(\mathcal{C})$ is dg-fully faithful (see Definition \ref{dgadjuncion}).
\end{proposition}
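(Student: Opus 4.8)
The plan is to verify the condition of Definition \ref{dgadjuncion}(b) directly: I will show that for every pair $M,N\in \mathrm{DgMod}(\mathcal{C}/\mathcal{I}_{\mathcal{B}})$ the canonical map induced by $\pi_{\ast}$,
$$\mathrm{Hom}_{\mathrm{DgMod}(\mathcal{C}/\mathcal{I}_{\mathcal{B}})}(M,N)\longrightarrow \mathrm{Hom}_{\mathrm{DgMod}(\mathcal{C})}(\pi_{\ast}M,\pi_{\ast}N),$$
is an isomorphism in $\mathrm{DgMod}_{str}(K)$. The key structural fact I will exploit is that $\pi:\mathcal{C}\longrightarrow \mathcal{C}/\mathcal{I}_{\mathcal{B}}$ is the identity on objects and, by construction of the quotient dg-category, surjective on each hom-space; that is, $\pi$ is a \textbf{full} dg-functor. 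Note that $\pi_{\ast}M=M\circ\pi$ and $\pi_{\ast}N=N\circ\pi$ are genuine objects of $\mathrm{DgMod}(\mathcal{C})$ since they are composites of dg-functors.

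First I would pin down the map. A morphism of degree $n$ in $\mathrm{Hom}_{\mathrm{DgMod}(\mathcal{C}/\mathcal{I}_{\mathcal{B}})}(M,N)$ is a dg-natural transformation $\eta$ (Definition \ref{dgnaturaltrans}), i.e. a family $\{\eta_{C}\}$ indexed by the objects of $\mathcal{C}/\mathcal{I}_{\mathcal{B}}$ --- which are the objects of $\mathcal{C}$ --- with $\eta_{C}\in \mathrm{Hom}^{n}_{\mathrm{DgMod}(K)}(M(C),N(C))$ and $N(\overline{f})\circ \eta_{C}=(-1)^{nm}\eta_{C'}\circ M(\overline{f})$ for all $\overline{f}\in (\mathcal{C}/\mathcal{I}_{\mathcal{B}})^{m}(C,C')$. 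Since $\pi_{\ast}M$ and $\pi_{\ast}N$ agree with $M$ and $N$ on objects, I set $\pi_{\ast}(\eta)_{C}:=\eta_{C}$; because $(\pi_{\ast}M)(f)=M(\overline{f})$ and $(\pi_{\ast}N)(f)=N(\overline{f})$ for $f\in \mathcal{C}^{m}(C,C')$ with $\overline{f}=\pi(f)$, the naturality identity for $\eta$ is precisely the one required for $\pi_{\ast}(\eta)$ to be a dg-natural transformation of degree $n$ from $\pi_{\ast}M$ to $\pi_{\ast}N$. This assignment is evidently $K$-linear and degree-preserving.

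Then I would prove bijectivity. Injectivity is immediate, as $\pi_{\ast}(\eta)$ has the same components as $\eta$ and the two families are indexed by the same class of objects. For surjectivity, given $\theta=\{\theta_{C}\}\in \mathrm{Hom}^{n}_{\mathrm{DgMod}(\mathcal{C})}(\pi_{\ast}M,\pi_{\ast}N)$, I regard $\{\theta_{C}\}$ as a family indexed by the objects of $\mathcal{C}/\mathcal{I}_{\mathcal{B}}$; its naturality condition over $\mathcal{C}$ reads $N(\overline{f})\circ \theta_{C}=(-1)^{nm}\theta_{C'}\circ M(\overline{f})$ for every $f\in \mathcal{C}^{m}(C,C')$. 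Here both sides depend on $f$ only through $\overline{f}=\pi(f)$ (so distinct lifts of the same morphism of $\mathcal{C}/\mathcal{I}_{\mathcal{B}}$ impose the same constraint), and since $\pi$ is full every homogeneous morphism of $\mathcal{C}/\mathcal{I}_{\mathcal{B}}$ arises this way, so this is precisely the naturality condition defining a dg-natural transformation $M\longrightarrow N$; this transformation is the desired preimage of $\theta$.

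Finally I would record compatibility with the differentials: by Equation \ref{dgNatmod} the differential of a morphism complex of dg-modules is computed componentwise, $d(\eta)_{C}=d_{\mathrm{Hom}_{\mathrm{DgMod}(K)}(M(C),N(C))}(\eta_{C})$, and $\pi_{\ast}$ leaves every component unchanged, so it commutes with $d$. Hence $\pi_{\ast}$ restricts to an isomorphism of dg-$K$-modules on hom-complexes, which is exactly Definition \ref{dgadjuncion}(b), so $\pi_{\ast}$ is dg-fully faithful. I do not expect a genuine obstacle; the only points that require attention are the sign bookkeeping in the naturality squares and the observation that it is \textbf{fullness} of $\pi$ (not faithfulness --- indeed $\pi$ is in general far from faithful) that transports naturality over $\mathcal{C}$ to naturality over $\mathcal{C}/\mathcal{I}_{\mathcal{B}}$.
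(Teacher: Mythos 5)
Your proof is correct and is essentially the paper's argument in unpacked form: the paper computes both hom-complexes as ends, $\mathrm{DgNat}(M,N)=\int_{C\in \mathcal{C}/\mathcal{I}_{\mathcal{B}}}[M(C),N(C)]=\int_{C\in \mathcal{C}}[M\pi(C),N\pi(C)]=\mathrm{DgNat}(M\pi,N\pi)$, and the middle equality is exactly the componentwise bijection you verify (same objects, same compatibility conditions because $\pi$ is full). Your write-up has the merit of making explicit the fullness of $\pi$ and the componentwise compatibility with differentials, which the paper leaves implicit.
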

\begin{proof}
For $M,N\in \mathrm{DgMod}(\mathcal{C}/\mathcal{I}_{\mathcal{B}})$ we get that:
\begin{align*}
\mathrm{DgNat}(M,N) \!\!= \!\!\int_{C\in \mathcal{C}/\mathcal{I}_{\mathcal{B}}}\!\!\!\![M(C),N(C)] =\int_{C\in \mathcal{C}}\!\!\!\![M(C),N(C)] & =\int_{C\in \mathcal{C}}\!\!\!\![M\pi(C),N\pi(C)] \\
& =\mathrm{DgNat}(M\pi,N\pi).
\end{align*}
\end{proof}

\begin{proposition}\label{adjointfull} For any dg-functor $F:\mathcal{A} \longrightarrow \mathcal{B},$ the following statements hold.
\begin{enumerate}
\item [(a)] Let $G:\mathcal{B} \longrightarrow \mathcal{A}$ be a dg-left adjoint of $F.$ Then $F$ is dg-fully faithful if and only if the counit $\varepsilon:G\circ F\longrightarrow 1_{\mathcal{A}}$ is an isomorphism.

\item [(b)] Let $H:\mathcal{B} \longrightarrow \mathcal{A}$ be a dg-right adjoint of $F.$ Then $F$ is dg-fully faithful if and only if the unit $\eta:1_{\mathcal{A}}\longrightarrow H\circ F$ is an isomorphism.
\end{enumerate}
\end{proposition}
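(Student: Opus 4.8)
The plan is to prove both parts by the classical ``triangle identity'' argument for adjunctions, transported to the dg-enriched setting, where ``isomorphism'' always means a closed degree zero morphism possessing a closed degree zero inverse. I will describe (a) in detail; part (b) is entirely dual, interchanging the roles of unit and counit and of the contravariant and covariant dg-Yoneda embeddings.

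Since $G$ is a dg-left adjoint of $F$, there is an adjunction isomorphism $\Phi_{B,A}\colon \mathcal{A}(G(B),A)\xrightarrow{\ \sim\ }\mathcal{B}(B,F(A))$ in $\mathrm{DgMod}_{str}(K)$, natural in $B$ and $A$, whose inverse is $f\mapsto \varepsilon_{A}\circ G(f)$; moreover the counit $\varepsilon\colon GF\longrightarrow 1_{\mathcal{A}}$ is a closed degree zero dg-natural transformation. The central step I would carry out is to identify the composite
$$\mathcal{A}(A,A')\xrightarrow{\ F_{A,A'}\ }\mathcal{B}(F(A),F(A'))\xrightarrow{\ \Phi_{F(A),A'}^{-1}\ }\mathcal{A}(GF(A),A')$$
with the map $\mathcal{A}(\varepsilon_{A},A')$ given by precomposition with $\varepsilon_A$. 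This drops out of the explicit formula for $\Phi^{-1}$ together with the dg-naturality square of $\varepsilon$, which gives $\varepsilon_{A'}\circ GF(\alpha)=\alpha\circ\varepsilon_{A}$ with no sign, precisely because $\varepsilon$ has degree zero. Hence $F_{A,A'}=\Phi_{F(A),A'}\circ\mathcal{A}(\varepsilon_A,A')$, and since $\Phi$ is always an isomorphism of complexes, $F_{A,A'}$ is an isomorphism if and only if $\mathcal{A}(\varepsilon_A,A')$ is one.

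From here both implications are immediate. If every $\varepsilon_A$ is an isomorphism in $\mathcal{A}$, then $\mathcal{A}(\varepsilon_A,A')$ is an isomorphism for all $A,A'$, so each $F_{A,A'}$ is an isomorphism and $F$ is dg-fully faithful. Conversely, if $F$ is dg-fully faithful, then $\mathcal{A}(\varepsilon_A,A')$ is an isomorphism for every $A'$, so the dg-natural transformation $\mathcal{A}(\varepsilon_A,-)\colon \mathcal{A}(A,-)\longrightarrow \mathcal{A}(GF(A),-)$ is an isomorphism in $\mathrm{DgMod}(\mathcal{A})$, a dg-natural transformation being invertible exactly when all its components are. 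Applying the dg-Yoneda embedding of $\mathcal{A}^{op}$ (Proposition \ref{dgYoneda}), which is fully faithful and therefore reflects isomorphisms, forces $\varepsilon_A$ to be an isomorphism in $\mathcal{A}$. For (b) one post-composes $F_{A,A'}$ with the adjunction isomorphism $\mathcal{B}(F(A),F(A'))\cong \mathcal{A}(A,HF(A'))$, identifies the result with post-composition by the unit $\eta_{A'}$, and concludes via the covariant dg-Yoneda embedding of $\mathcal{A}$.

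I do not anticipate any real difficulty: this is the enriched analogue of a standard fact, and the only points demanding attention are formal. One must confirm that the triangle identities and the naturality squares for $\varepsilon$ and $\eta$ hold on the nose in $\mathrm{DgMod}_{str}(K)$ (no signs intervene, as the unit and counit are closed of degree zero); that an isomorphism in $\mathrm{DgMod}_{str}(K)$ is genuinely an isomorphism of complexes; and that a dg-fully faithful functor reflects isomorphisms --- which is clear, since if $F(f)$ has a closed degree zero inverse $g'$, then $g'=F(g)$ for a unique $g$, and faithfulness of $F$ upgrades $F(g)F(f)=1$ and $F(f)F(g)=1$ to $gf=1$ and $fg=1$, with $g$ automatically closed of degree zero.
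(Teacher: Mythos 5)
Your argument is correct and is essentially the same as the paper's, which gives no details but simply asserts that the proof of \cite[Theorem 3.4.1]{Borceux1} ``can be adapted to this setting'': you have carried out that adaptation explicitly, factoring $F_{A,A'}$ through the adjunction isomorphism as (pre/post)-composition with the counit/unit and concluding via the dg-Yoneda embedding, with the dg-specific checks (no signs intervene since $\varepsilon$ and $\eta$ are closed of degree zero, and a fully faithful dg-functor reflects isomorphisms) handled correctly. Nothing further is needed.
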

\begin{proof} 
The proof given in \cite[Theorem 3.4.1]{Borceux1} on page 114, can be adapted to this setting.
\end{proof}

\begin{proposition}\label{dgfulliso}
Let $F:\mathcal{A}\longrightarrow \mathcal{B}$ and $G:\mathcal{B}\longrightarrow \mathcal{A}$  be dg-functors such that $F$ is dg-left adjoint to $G$. Then $G\circ F\simeq 1_{\mathcal{A}}$ if and only if the unit $\eta:1_{\mathcal{A}}\longrightarrow GF$ is an isomorphism. Dually $FG\simeq 1_{\mathcal{B}}$ if and only if the counit $\epsilon:FG\longrightarrow  1_{\mathcal{B}}$ is an isomorphisms.  In particular, $F$ is dg-fully faithful if and only if $G\circ F\simeq 1_{\mathcal{A}}$ and $G$ is dg-fully faithful if and only if $FG\simeq 1_{\mathcal{B}}$. 
\end{proposition}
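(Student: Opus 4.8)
The plan is to reduce everything to Proposition \ref{adjointfull} together with one nontrivial equivalence, namely: if $GF$ is isomorphic to $1_{\mathcal{A}}$ as a dg-functor, then the unit $\eta\colon 1_{\mathcal{A}}\to GF$ is itself an isomorphism. One direction is free, since if $\eta$ is invertible then certainly $GF\simeq 1_{\mathcal{A}}$. For the converse, fix an isomorphism $\theta\colon GF\to 1_{\mathcal{A}}$ of dg-functors; as an isomorphism in $\mathrm{DgFun}(\mathcal{A},\mathcal{A})$ it admits a two-sided inverse $\theta^{-1}$, and, just as for the unit and counit in the Remark following Definition \ref{dgadjuncion}, we may take $\theta,\theta^{-1}$ closed of degree zero. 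Hence every natural transformation occurring below is closed of degree zero, all dg-naturality squares (Definition \ref{dgnaturaltrans}) commute on the nose, and the computation is the usual one with no Koszul signs.

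First I would record the adjunction data. From Definition \ref{dgadjuncion} and the Remark after it we have the closed degree-zero unit $\eta\colon 1_{\mathcal{A}}\to GF$ and counit $\varepsilon\colon FG\to 1_{\mathcal{B}}$, satisfying the triangle identities $\varepsilon F\circ F\eta=\mathrm{id}_F$ and $G\varepsilon\circ\eta G=\mathrm{id}_G$ (these follow from naturality of the adjunction isomorphism $\Phi$ exactly as in the non-enriched case). Put $T:=GF$ and $\mu:=G\varepsilon F\colon TT\to T$. Whiskering $G\varepsilon\circ\eta G=\mathrm{id}_G$ by $F$ on the right yields
$$\mu\circ(\eta T)=\mathrm{id}_{T},\qquad\text{where }(\eta T)_A=\eta_{T(A)}.$$

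The heart of the proof is to transport $\mu$ along $\theta$ to an endomorphism of the identity functor. Set
$$\sigma:=\theta\circ\eta\colon 1_{\mathcal{A}}\to 1_{\mathcal{A}},\qquad \rho:=\theta\circ\mu\circ(T\theta^{-1})\circ\theta^{-1}\colon 1_{\mathcal{A}}\to 1_{\mathcal{A}},$$
where $(T\theta^{-1})_A=T(\theta^{-1}_A)$. Then $\rho\circ\sigma=\mathrm{id}_{1_{\mathcal{A}}}$: since $\theta^{-1}\circ\theta=\mathrm{id}_T$ we get $\rho\circ\sigma=\theta\circ\mu\circ(T\theta^{-1})\circ\eta$, and naturality of $\eta$ at the morphisms $\theta^{-1}_A$ rewrites $(T\theta^{-1})\circ\eta$ as $(\eta T)\circ\theta^{-1}$, so that $\rho\circ\sigma=\theta\circ(\mu\circ\eta T)\circ\theta^{-1}=\theta\circ\theta^{-1}=\mathrm{id}$. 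Now $\rho$ and $\sigma$ are natural transformations of $1_{\mathcal{A}}$, and naturality of $\sigma$ at the morphism $\rho_A\colon A\to A$ gives $\sigma_A\circ\rho_A=\rho_A\circ\sigma_A=\mathrm{id}_A$; thus each $\sigma_A$ is invertible, i.e.\ $\sigma=\theta\circ\eta$ is an isomorphism, and hence $\eta=\theta^{-1}\circ\sigma$ is an isomorphism. This establishes ``$GF\simeq 1_{\mathcal{A}}\iff\eta$ iso''; the statement ``$FG\simeq 1_{\mathcal{B}}\iff\varepsilon$ iso'' is the formal dual, obtained by replacing the monad $(GF,\eta,G\varepsilon F)$ by the comonad $(FG,\varepsilon,F\eta G)$, the triangle identity used above by $\varepsilon F\circ F\eta=\mathrm{id}_F$, and reversing all arrows.

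Finally, the ``in particular'' clauses follow by combining these equivalences with Proposition \ref{adjointfull}. Since $G$ is a dg-right adjoint of $F$, part (b) of that proposition gives ``$F$ dg-fully faithful $\iff$ $\eta$ iso'', which with the first equivalence yields ``$F$ dg-fully faithful $\iff$ $GF\simeq 1_{\mathcal{A}}$''; dually, since $F$ is a dg-left adjoint of $G$, part (a) applied with $G$ playing the role of the functor and $F$ playing the role of its dg-left adjoint gives ``$G$ dg-fully faithful $\iff$ $\varepsilon$ iso'', which with the second equivalence yields ``$G$ dg-fully faithful $\iff$ $FG\simeq 1_{\mathcal{B}}$''. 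I expect the only real difficulty to be bookkeeping: fixing the whiskering conventions and the order of composition in the definition of $\rho$ and in the chase $\rho\circ\sigma=\mathrm{id}$, and confirming that no Koszul signs intervene — which is guaranteed precisely because $\eta$, $\varepsilon$, $\theta$, $\theta^{-1}$, and hence all their whiskerings and composites, are closed of degree zero. Everything else is formal.
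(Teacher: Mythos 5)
Your proof is correct, and it is essentially the paper's argument: the paper's proof of this proposition is just the citation ``the proof of \cite[Lemma 1.3]{Johnstone1} can be adapted to this setting,'' and what you have written out --- transporting the monad multiplication $\mu=G\varepsilon F$ along $\theta$, using the unit law $\mu\circ\eta T=\mathrm{id}_T$, and then exploiting that endomorphisms of $1_{\mathcal{A}}$ commute to upgrade the one-sided inverse of $\sigma=\theta\circ\eta$ to a two-sided one --- is exactly that adaptation, with the correct observation that everything in sight is closed of degree zero so no Koszul signs appear. The only point worth making explicit is that ``$GF\simeq 1_{\mathcal{A}}$'' must be read as an isomorphism in the underlying category (closed, degree zero), which is the intended reading here; granting that, $d(\theta^{-1})=0$ follows automatically from $d(\theta)=0$ and the Leibniz rule, as you implicitly use.
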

\begin{proof}
The proof given \cite[Lemma 1.3]{Johnstone1} can be adapted to this setting.
\end{proof}

\begin{proposition}\label{LemmaMoerdik}
Consider the following diagram of dg-functors
$$\xymatrix{\mathcal{A}\ar[rrr]|{G}  &  &&\mathcal{B}
\ar@<-2ex>[lll]_{F}\ar@<2ex>[lll]^{H}}$$
such that $F$ is dg-left adjoint to $G$ and $G$ is dg-left adjoint to $H$. Then $F$ is  dg-fully faithful if and only if $H$  is dg-fully faithful.
\end{proposition}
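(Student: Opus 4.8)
The plan is to reduce both fully faithfulness assertions to statements about the two endofunctors $G\circ F$ and $G\circ H$ of $\mathcal{B}$, and then to close the loop by uniqueness of adjoints. First I would apply Proposition \ref{dgfulliso} to the dg-adjoint pair $(F,G)$, obtaining that $F$ is dg-fully faithful if and only if $G\circ F\simeq 1_{\mathcal{B}}$, and apply it to the dg-adjoint pair $(G,H)$, obtaining that $H$ is dg-fully faithful if and only if $G\circ H\simeq 1_{\mathcal{B}}$; here $G\circ F$ and $G\circ H$ are endo-dg-functors of $\mathcal{B}$ and $\simeq$ means isomorphic as dg-functors, i.e. there is a closed degree zero dg-natural transformation with a closed degree zero inverse. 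Thus everything is reduced to proving the equivalence $G\circ F\simeq 1_{\mathcal{B}}\iff G\circ H\simeq 1_{\mathcal{B}}$.

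Next I would observe that the two given dg-adjunctions compose to a dg-adjunction $G\circ F\dashv G\circ H$ of endofunctors of $\mathcal{B}$: for all $B,B'\in\mathcal{B}$ there are isomorphisms
$$\mathcal{B}\big((G\circ F)(B),B'\big)\;\cong\;\mathcal{A}\big(F(B),H(B')\big)\;\cong\;\mathcal{B}\big(B,(G\circ H)(B')\big)$$
in $\mathrm{DgMod}_{str}(K)$, natural in $B$ and $B'$, the first coming from $G\dashv H$ and the second from $F\dashv G$. I would also record the routine dg-analogues of the standard facts that adjoints are essentially unique (if $L$ is dg-left adjoint to both $R$ and $R'$, then $R\simeq R'$, and if $L\simeq L'$ with $L$ dg-left adjoint to $R$ and $L'$ dg-left adjoint to $R'$, then $R\simeq R'$) and that a dg-adjunction may be transported along an isomorphism of either of its two functors; both follow at once by combining the defining natural isomorphisms in $\mathrm{DgMod}_{str}(K)$ and invoking the dg-Yoneda lemma.

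With these in hand the conclusion is a two-line argument. If $G\circ F\simeq 1_{\mathcal{B}}$, then transporting the adjunction $G\circ F\dashv G\circ H$ along this isomorphism shows $1_{\mathcal{B}}$ is dg-left adjoint to $G\circ H$; since $1_{\mathcal{B}}$ is also dg-left adjoint to $1_{\mathcal{B}}$, uniqueness of dg-right adjoints gives $G\circ H\simeq 1_{\mathcal{B}}$. Conversely, if $G\circ H\simeq 1_{\mathcal{B}}$, then $G\circ F$ is dg-left adjoint to $1_{\mathcal{B}}$, hence $G\circ F\simeq 1_{\mathcal{B}}$ by uniqueness of dg-left adjoints. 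Combined with the first paragraph this gives precisely that $F$ is dg-fully faithful if and only if $H$ is dg-fully faithful. I expect the only genuine work to be in the second paragraph: verifying that the composite of two dg-adjunctions is again a dg-adjunction (that the composite hom-isomorphism is natural as a morphism in $\mathrm{DgMod}_{str}(K)$) and that uniqueness and transport of adjoints survive the $\mathrm{DgMod}_{str}(K)$-enrichment; everything else is formal. A slightly more computational alternative would be to use the mate calculus for the adjunction $G\circ F\dashv G\circ H$ to identify the counit of $G\dashv H$ with the mate of the unit of $F\dashv G$ (a triangle-identity check) and then invoke that mates of invertible dg-natural transformations are invertible, but the uniqueness-of-adjoints route above is cleaner.
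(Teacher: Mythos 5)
Your proof is correct and is essentially the argument the paper relies on: the paper's ``proof'' is only a pointer to \cite[Lemma 1]{MacMoer}, whose argument is precisely your reduction via Proposition \ref{dgfulliso} to the composite adjunction $G\circ F\dashv G\circ H$ on $\mathcal{B}$ followed by uniqueness of adjoints. The two ingredients you leave as routine --- that composing the two hom-isomorphisms yields a dg-adjunction $G\circ F\dashv G\circ H$, and that dg-adjoints are unique up to closed degree-zero natural isomorphism --- do indeed follow directly from the dg-Yoneda lemma, so nothing is missing.
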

\begin{proof}
The proof given in \cite[Lemma 1]{MacMoer} in p. 369 in Section VII.4 in Mac Lane-Moerdijk's book, can be adapted to this setting.
\end{proof}

\begin{proposition}\label{unacomposicion}
Let $F:\mathcal{A}\longrightarrow \mathcal{C}$ be a full and faithful dg-functor.
Then $\mathrm{Lan}_{F}(M)\circ F\simeq M$ for all $M\in \mathrm{DgMod}(\mathcal{A})$. That is $F_{\ast}\circ \mathbb{L}\mathrm{an}_{F}(-)\simeq 1_{\mathrm{DgMod}(\mathcal{A})}.$
\end{proposition}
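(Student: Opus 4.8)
The plan is to reduce everything to the explicit description of the left Kan extension as a coend given in Proposition \ref{existeLan}, and then apply the co-Yoneda isomorphism of Corollary \ref{tensorhom}. Note first that $F_{\ast}\circ \mathbb{L}\mathrm{an}_{F}(M)=\mathbb{L}\mathrm{an}_{F}(M)\circ F=\mathrm{Lan}_{F}(M)\circ F$, so the two formulations in the statement coincide, and it suffices to exhibit a closed degree zero natural isomorphism $\mathrm{Lan}_{F}(M)\circ F\simeq M$ in $\mathrm{DgMod}(\mathcal{A})$, natural also in $M$.

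First I would fix $M\in \mathrm{DgMod}(\mathcal{A})$ and $A\in \mathcal{A}$ and evaluate. By Proposition \ref{existeLan},
$$\big(F_{\ast}\circ \mathbb{L}\mathrm{an}_{F}(M)\big)(A)=\mathbb{L}\mathrm{an}_{F}(M)(F(A))=\mathcal{C}\big(F(-),F(A)\big)\otimes_{\mathcal{A}}M,$$
the coend $\int^{A'\in \mathcal{A}}\mathcal{C}(F(A'),F(A))\otimes M(A')$ of Definition \ref{tensorprodascoend}. Next I would use that $F$ is fully faithful: by Definition \ref{dgadjuncion}(b) the maps $F_{A',A}\colon \mathrm{Hom}_{\mathcal{A}}(A',A)\longrightarrow \mathcal{C}(F(A'),F(A))$ are isomorphisms in $\mathrm{DgMod}_{str}(K)$, and by functoriality of $F$ they commute with the right $\mathcal{A}$-action of Definition \ref{leftrightaction} (signs included) and are natural in $A$; hence they assemble into an isomorphism of dg-functors $\mathrm{Hom}_{\mathcal{A}}(-,A)\xrightarrow{\ \sim\ }\mathcal{C}(F(-),F(A))$ in $\mathrm{DgMod}(\mathcal{A}^{op})$, natural in $A$. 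Since the coend $-\otimes_{\mathcal{A}}M$ is functorial in its first argument, this yields
$$\mathcal{C}\big(F(-),F(A)\big)\otimes_{\mathcal{A}}M\ \simeq\ \mathrm{Hom}_{\mathcal{A}}(-,A)\otimes_{\mathcal{A}}M\ \simeq\ M(A),$$
the last isomorphism being Corollary \ref{tensorhom}. Composing these gives a closed degree zero isomorphism $\big(F_{\ast}\circ \mathbb{L}\mathrm{an}_{F}(M)\big)(A)\simeq M(A)$ natural in $A$, i.e. an isomorphism $\mathrm{Lan}_{F}(M)\circ F\simeq M$ in $\mathrm{DgMod}(\mathcal{A})$; tracking the construction through the identifications shows it is natural in $M$ as well (equivalently, one checks it is the unit $\varphi_{M}$ of the dg-adjunction $(\mathbb{L}\mathrm{an}_{F}(-),F_{\ast})$ of Proposition \ref{dgadjuninduced}, which is a degree zero dg-natural transformation by Definition \ref{definitionleftkan}), whence $F_{\ast}\circ \mathbb{L}\mathrm{an}_{F}(-)\simeq 1_{\mathrm{DgMod}(\mathcal{A})}$.

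The main obstacle is the bookkeeping in the middle step: one has to verify carefully that the full-faithfulness isomorphisms $F_{A',A}$ are morphisms of right dg $\mathcal{A}$-modules — that is, that they commute with the differentials on $\mathrm{Hom}_{\mathcal{A}}(-,A)$ and $\mathcal{C}(F(-),F(A))$ and with the Koszul-signed right action of Definition \ref{leftrightaction} — so that the induced map on coends is again a closed degree zero isomorphism, and that the various isomorphisms are simultaneously natural in $A$ and in $M$. (An alternative packaging is to invoke Proposition \ref{dgfulliso}: since $(\mathbb{L}\mathrm{an}_{F}(-),F_{\ast})$ is a dg-adjoint pair, it is enough to show the unit $1_{\mathrm{DgMod}(\mathcal{A})}\longrightarrow F_{\ast}\circ \mathbb{L}\mathrm{an}_{F}(-)$ is an isomorphism; but establishing this still requires the same pointwise coend computation above.)
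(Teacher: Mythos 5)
Your proposal is correct and rests on the same pillars as the paper's own proof: the coend formula $\mathrm{Lan}_{F}(M)(F(A))=\mathcal{C}(F(-),F(A))\otimes_{\mathcal{A}}M$ from Proposition \ref{existeLan}, the full-faithfulness of $F$ to replace $\mathcal{C}(F(A'),F(A))$ by $\mathcal{A}(A',A)$, and the co-Yoneda isomorphism of Corollary \ref{tensorhom}. The only difference is presentational: you transport the isomorphism of right modules $\mathrm{Hom}_{\mathcal{A}}(-,A)\simeq\mathcal{C}(F(-),F(A))$ directly through the coend, whereas the paper computes $\mathrm{DgNat}\big(\mathrm{Lan}_{F}(M)\circ F,N\big)$ for an arbitrary $N$ by a chain of end/adjunction identities and concludes by Yoneda's Lemma.
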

\begin{proof}
We have the isomorphisms for $N\in \mathrm{DgMod}(\mathcal{A})$:

\begin{align*}
& \mathrm{DgNat}\Big(\mathrm{Lan}_{F}(M)\circ F,N\Big)\simeq \\
&= \int_{A\in \mathcal{A}}\Big[\mathrm{Lan}_{F}(M)(F(A)),N(A)\Big] & [\text{Proposition}\,\, \ref{construcciondgnat}]\\
 & =\int_{A\in \mathcal{A}}\Big[\mathcal{C}(F,F(A))\otimes_{\mathcal{A}}M ,N(A)\Big] & [\text{Proposition}\,\, \ref{existeLan}]
 \\
 & = \int_{A\in \mathcal{A}}\Big[\int^{A'\in \mathcal{A}} \mathcal{C}(F(A'),F(A))\otimes M(A'),N(A)\Big] & [\text{Definition}\,\, \ref{tensorprodascoend}]\\
 & = \int_{A\in \mathcal{A}} \int_{A'\in \mathcal{A}} \Big[\mathcal{C}(F(A'),F(A))\otimes M(A'),N(A)\Big]  & \quad [(\ast)]\\
& = \int_{A\in \mathcal{A}} \int_{A'\in \mathcal{A}} \Big[\mathcal{C}(F(A'),F(A)), [M(A'),N(A)]\Big] &  [\text{adjointness}]\\
& = \int_{A\in \mathcal{A}} \int_{A'\in \mathcal{A}} \Big[\mathcal{A}(A',A), [M(A'),N(A)]\Big] & [F\,\, \text{is full and faithful}]\\
& = \int_{A\in \mathcal{A}} \int_{A'\in \mathcal{A}} \Big[\mathcal{A}(A',A)\otimes M(A'), N(A)]\Big] & [\text{adjointness}]\\
& = \int_{A\in \mathcal{A}}  \Big[\int^{A'\in \mathcal{A}}\mathcal{A}(A',A)\otimes M(A'), N(A)]\Big] &  [(\ast)]\\
& = \int_{A\in \mathcal{A}}  \Big[\mathcal{A}(-,A)\otimes_{\mathcal{A}}M, N(A)]\Big] &  [\text{Definition}\,\, \ref{tensorprodascoend}]\\
& = \int_{A\in \mathcal{A}}  \Big[M(A), N(A)]\Big] & [\text{Corollary}\,\, \ref{tensorhom}]\\
& =\mathrm{DgNat}(M,N) & [\text{Proposition}\,\, \ref{construcciondgnat}]
\end{align*}
where the isomorphisms $(\ast)$ follows from the fact that  $[-,N(A)]$ changes ends for coends, and this is the dual to formula 2.3 on page 28  in \cite{Kelly}. This implies by Yoneda's Lemma that $\mathrm{Lan}_{F}(M)\circ F\simeq M$.
\end{proof}

\begin{theorem}\label{dg-recollement}
Let $\mathcal{C}$ be a dg-category with finite coproducts  and $\mathcal{B}$ a full differential graded subcategory  closed under coproducts. Consider the inclusion $j:\mathcal{B}\longrightarrow \mathcal{C}$ the projection $\pi:\mathcal{C}\longrightarrow \mathcal{C}/\mathcal{I}_{\mathcal{B}}$ and the induced functors $\pi_{\ast}:\mathrm{DgMod}(\mathcal{C}/\mathcal{I}_{\mathcal{B}})\longrightarrow \mathrm{DgMod}(\mathcal{C})$ and
$j^{\ast}:\mathrm{DgMod}(\mathcal{C})\longrightarrow \mathrm{DgMod}(\mathcal{B})$. Then, we have the following diagram of dg-adjoint triples
$$\xymatrix{\mathrm{DgMod}(\mathcal{C}/\mathcal{I}_{\mathcal{B}})\ar[rr]|{\pi_{\ast}=\pi_{!}}  &  & \mathrm{DgMod}(\mathcal{C})\ar[rr]|{j^{!}=j^{\ast}}\ar@<-2ex>[ll]_{\pi^{\ast}}\ar@<2ex>[ll]^{\pi^{!}}  &   & \mathrm{DgMod}(\mathcal{B})\ar@<-2ex>[ll]_{j_{!}}\ar@<2ex>[ll]^{j_{\ast}}}$$

where $\pi^{\ast}=\mathbb{L}\mathrm{an}_{\pi}(-)$, $\pi^{!}=\mathbb{R}\mathrm{an}_{\pi}(-)$  and $j_{!}=\mathbb{L}\mathrm{an}_{j}(-)$, $j_{\ast}=\mathbb{R}\mathrm{an}_{j}(-)$. Moreover we have that 
\begin{enumerate}
\item [(a)] $\mathrm{Im}(\pi_{\ast})=\mathrm{Ker}(j{\ast})$,

\item [(b)] $\pi_{\ast}$ and $j_{!}$ and $j_{\ast}$ are dg-fully faithful.
\end{enumerate}
\end{theorem}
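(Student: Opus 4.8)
The plan is to obtain the recollement by assembling the adjunction and full-faithfulness results established in Sections~3 and~4; the substantive analytic work is already done there, and what remains is organizational. The one point requiring care is that the restriction functor along $j$ is the functor denoted $F_{\ast}$ in Section~3 for $F=j$ (written $j^{\ast}=j^{!}$ in the statement of the theorem), so one must keep the two uses of the symbol $j_{\ast}$ apart.

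First I would set up the two dg-adjoint triples. Since $\mathcal{C}$ has finite coproducts and $\mathcal{B}$ is closed under coproducts, Lemma~\ref{I B ideal dg } gives that $\mathcal{I}_{\mathcal{B}}$ is a dg-ideal, so $\mathcal{C}/\mathcal{I}_{\mathcal{B}}$ is a dg-category and $\pi:\mathcal{C}\longrightarrow\mathcal{C}/\mathcal{I}_{\mathcal{B}}$ a dg-functor; applying Proposition~\ref{dgadjuninduced} to $\pi$ yields that $\pi_{\ast}$ has left dg-adjoint $\pi^{\ast}=\mathbb{L}\mathrm{an}_{\pi}(-)$ and right dg-adjoint $\pi^{!}=\mathbb{R}\mathrm{an}_{\pi}(-)$. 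Applying Proposition~\ref{dgadjuninduced} to the inclusion $j:\mathcal{B}\longrightarrow\mathcal{C}$ yields that the restriction $j^{\ast}$ has left dg-adjoint $j_{!}=\mathbb{L}\mathrm{an}_{j}(-)$ and right dg-adjoint $j_{\ast}=\mathbb{R}\mathrm{an}_{j}(-)$. This produces the displayed diagram of dg-adjoint triples $(\pi^{\ast},\pi_{\ast},\pi^{!})$ and $(j_{!},j^{\ast},j_{\ast})$.

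Item~(a) is precisely Proposition~\ref{Ker=Im} (the functor called $j_{\ast}$ there is the restriction along $j$, i.e. our $j^{\ast}$); in particular $\mathrm{Im}(\pi_{\ast})\subseteq\mathrm{Ker}(j^{\ast})$, hence $j^{\ast}\circ\pi_{\ast}=0$. For item~(b): the dg-full faithfulness of $\pi_{\ast}$ is Proposition~\ref{inducedgfull}. Because $\mathcal{B}$ is a \emph{full} dg-subcategory, $j$ is a full and faithful dg-functor, so Proposition~\ref{unacomposicion} gives $j^{\ast}\circ j_{!}\simeq 1_{\mathrm{DgMod}(\mathcal{B})}$; since $j_{!}$ is dg-left adjoint to $j^{\ast}$, Proposition~\ref{dgfulliso} converts this isomorphism into dg-full faithfulness of $j_{!}$. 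Finally, since $j_{!}\dashv j^{\ast}\dashv j_{\ast}$, Proposition~\ref{LemmaMoerdik} applies to the triple $(j_{!},j^{\ast},j_{\ast})$ and yields that $j_{!}$ is dg-fully faithful if and only if $j_{\ast}$ is; hence $j_{\ast}$ is dg-fully faithful as well. (Equivalently, one could prove $j^{\ast}\circ j_{\ast}\simeq 1$ using the coend-dual of Proposition~\ref{unacomposicion} and conclude by the second half of Proposition~\ref{dgfulliso}.)

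I do not expect a genuine obstacle here: every step invokes an already-established result, and the main care is bookkeeping — the notational identification of $j^{\ast}$ with Section~3's $j_{\ast}$, verifying the hypotheses of the cited statements (finite coproducts in $\mathcal{C}$ and $\mathcal{B}$ closed under coproducts for Lemma~\ref{I B ideal dg }; $\mathcal{B}$ full so that $j$ is full and faithful for Propositions~\ref{unacomposicion} and~\ref{dgfulliso}), and observing that (a), together with the two dg-adjoint triples and the full-faithfulness statements of (b), constitute exactly the defining data of a recollement (note also that $\pi^{\ast}\pi_{\ast}\simeq 1\simeq \pi^{!}\pi_{\ast}$ then follow automatically from dg-full faithfulness of $\pi_{\ast}$ via Proposition~\ref{adjointfull}).
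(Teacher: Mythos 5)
Your proposal is correct and follows essentially the same route as the paper: Proposition \ref{dgadjuninduced} for the two adjoint triples, Proposition \ref{Ker=Im} for (a), Proposition \ref{inducedgfull} for $\pi_{\ast}$, Proposition \ref{unacomposicion} (via Proposition \ref{dgfulliso}) for $j_{!}$, and Proposition \ref{LemmaMoerdik} for $j_{\ast}$. Your explicit handling of the notational clash between the restriction functor $j^{\ast}$ and Section~3's $j_{\ast}$, and your spelling out of the intermediate step through Proposition \ref{dgfulliso}, are slightly more careful than the paper's two-line argument but do not constitute a different approach.
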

\begin{proof}
By Proposition \ref{dgadjuninduced}, we have the above diagram of dg-adjoints. By Proposition \ref{inducedgfull}, we have that $\pi_{\ast}$ is dg-full faithful. Since $j:\mathcal{B}\longrightarrow \mathcal{C}$ is dg-fully faithfull, by Proposition \ref{unacomposicion}, we have that $j_{!}$ is dg-fully faithful; and by Proposition \ref{LemmaMoerdik} we conclude that $j_{\ast}$ is dg-fully faithful. Finally, by Proposition \ref{Ker=Im}, we have that $\mathrm{Im}(\pi_{\ast})=\mathrm{Ker}(j{\ast})$.
\end{proof}

\subsection{Recollement of abelian categories}
Now, we want to obtain a similar result of Theorem \ref{dg-recollement} of the underlying category of $\mathrm{DgFun}(\mathcal{A},\mathrm{DgMod}(K))$. First, we have the following definitions.

\begin{Remark}\label{morcatsubya}
We recall that given $\mathcal{A}$  a $\mathrm{dg}$-category we have the underlying category $\mathcal{A}_{0}$ whose objects are the same as the objects of $\mathcal{A}$ and whose morphisms are
$$\mathrm{Hom}_{\mathcal{A}_{0}}(X,Y):=\mathrm{Hom}_{\mathrm{DgMod}_{Str}(K)}(K,\mathcal{A}(X,Y)).$$
Let $f:K\longrightarrow \mathcal{A}(X,Y))$ a morphism in $\mathrm{DgMod}_{Str}(K)$.  Then we have that
$f$ is a morphism of degree zero and the following diagram commutes
$$\xymatrix{K\ar[r]^{f}\ar[d]_{d_{K}} &  \mathcal{A}(X,Y)\ar[d]^{d}\\
K\ar[r]_{f} & \mathcal{A}(X,Y)}$$
Since $f$ is a morphism of $K$-modules we have that $f$ is determined by $f(1)$. Now, since $d_{K}=0$ we have that
$d (f(1))=f(d_{K}(1))=f(0)=0$. Moreover, since $f$ is of degree zero and $1\in K^{0}$ we have that $f(1)\in \mathcal{A}(X,Y)^{0}$ (homogenous component of degree zero of $\mathcal{A}(X,Y)$). Hence, we have an isomorphism

$$\mathrm{Hom}_{\mathcal{A}_{0}}(X,Y)\simeq \{\alpha \in \mathcal{A}(X,Y)^{0}\mid d_{\mathcal{A}(X,Y)}(f)=0\}.$$
\end{Remark}

\begin{proposition}\label{todopasasubyacentes}
Let $F:\mathcal{A}\longrightarrow \mathcal{B}$  and $G:\mathcal{B}\longrightarrow \mathcal{A}$ be  dg-functors. Then we have induced functors
$F_{0}:\mathcal{A}_{0}\longrightarrow \mathcal{B}_{0}$  and $G_{0}:\mathcal{B}_{0}\longrightarrow \mathcal{A}_{0}$ between the its underlying categories. Moreover
\begin{enumerate}
\item [(a)] If $F$ is dg-left adjoint to $G$, then $F_{0}$ is left adjoint to $G_{0}$.

\item [(b)] If $F$ is dg-full and faithfull, then $F$ is full and fatifull.
\end{enumerate}
\end{proposition}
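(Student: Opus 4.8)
The plan is to realize everything as an application of the ``underlying set'' functor
$$(-)_{0}:=\mathrm{Hom}_{\mathrm{DgMod}_{Str}(K)}(K,-)\colon \mathrm{DgMod}_{Str}(K)\longrightarrow \mathbf{Set},$$
which is exactly the lax monoidal functor giving the change of base for the closed symmetric monoidal category $\mathcal{V}=\mathrm{DgMod}_{Str}(K)$. Since a dg-category is a $\mathcal{V}$-category and a dg-functor is a $\mathcal{V}$-functor, applying $(-)_{0}$ hom-space-wise turns $\mathcal{A},\mathcal{B}$ into ordinary categories $\mathcal{A}_{0},\mathcal{B}_{0}$ and $F,G$ into ordinary functors; concretely, using the description in Remark \ref{morcatsubya}, $F_{0}$ is the identity on objects and sends a closed degree-zero morphism $\alpha\in\mathcal{A}(X,Y)$ to $F(\alpha)\in\mathcal{B}(F(X),F(Y))$, which is again closed of degree zero because $F$ preserves degrees and commutes with the differentials. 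First I would check that $F_{0}$ so defined is a functor: it preserves identities because $F$ is a $K$-linear functor, and it preserves composition because $F$ is compatible with the composition maps $\theta_{X,Y,Z}$; both are the restriction of the corresponding dg-statements to the closed degree-zero parts. The same recipe yields $G_{0}$.

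For (a), I would start from the natural isomorphism $\Phi_{A,B}\colon\mathcal{B}(F(A),B)\xrightarrow{\ \sim\ }\mathcal{A}(A,G(B))$ in $\mathrm{DgMod}_{Str}(K)$ provided by Definition \ref{dgadjuncion}(a). Applying $(-)_{0}$ turns each $\Phi_{A,B}$ into a bijection
$$\mathrm{Hom}_{\mathcal{B}_{0}}(F_{0}(A),B)=\bigl(\mathcal{B}(F(A),B)\bigr)_{0}\xrightarrow{\ \sim\ }\bigl(\mathcal{A}(A,G(B))\bigr)_{0}=\mathrm{Hom}_{\mathcal{A}_{0}}(A,G_{0}(B)),$$
where we use that $F_{0}(A)=F(A)$ and $G_{0}(B)=G(B)$ on objects. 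Naturality of this family in $A$ and $B$ with respect to morphisms of $\mathcal{A}_{0}$ and $\mathcal{B}_{0}$ follows from the dg-naturality of $\Phi$: the relevant naturality squares for closed degree-zero morphisms are obtained by applying $(-)_{0}$ to the $\mathcal{V}$-naturality squares of $\Phi$, which already live in $\mathrm{DgMod}_{Str}(K)$. Hence $(F_{0},G_{0})$ is an adjoint pair. Part (b) is the same argument with a single hom-space: if $\mathcal{A}(A,A')\to\mathcal{B}(F(A),F(A'))$ is an isomorphism in $\mathrm{DgMod}_{Str}(K)$ for all $A,A'$, then applying $(-)_{0}$ yields a bijection $\mathrm{Hom}_{\mathcal{A}_{0}}(A,A')\to\mathrm{Hom}_{\mathcal{B}_{0}}(F_{0}(A),F_{0}(A'))$, which is precisely fullness and faithfulness of $F_{0}$.

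There is no serious obstacle here; the only points requiring a line of verification are (i) that $F_{0}$ genuinely preserves identities and composition, i.e.\ that $(-)_{0}$ is compatible with the unit and composition structure maps of the $\mathcal{V}$-categories involved---equivalently that $(-)_{0}$ is lax monoidal and so induces the usual change-of-base $2$-functor from $\mathcal{V}$-$\mathbf{Cat}$ to $\mathbf{Cat}$---and (ii) that the naturality squares descend, which as noted is automatic. Alternatively, one can avoid the enriched language and argue by hand using the explicit identification $\mathrm{Hom}_{\mathcal{A}_{0}}(X,Y)\simeq\{\alpha\in\mathcal{A}(X,Y)^{0}\mid d_{\mathcal{A}(X,Y)}(\alpha)=0\}$ from Remark \ref{morcatsubya}: set $F_{0}(\alpha):=F(\alpha)$, $G_{0}(\beta):=G(\beta)$, and read off the adjunction bijection and full faithfulness directly from $\Phi$ and from the dg-full-faithfulness hypothesis; this route is longer but entirely elementary.
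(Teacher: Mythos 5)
Your argument is correct: the induced functors $F_{0},G_{0}$ act as $F,G$ on closed degree-zero morphisms, and applying the underlying-object functor $(-)_{0}=\mathrm{Hom}_{\mathrm{DgMod}_{Str}(K)}(K,-)$ to the structural isomorphisms of Definition \ref{dgadjuncion} yields the ordinary adjunction bijections and the fullness/faithfulness bijections, with naturality descending automatically. The paper's own proof consists solely of the phrase ``It is straightforward,'' so your write-up supplies exactly the routine change-of-base verification the authors omit; there is nothing to contrast.
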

\begin{proof}
It is straightforward.
\end{proof}

\begin{definition}\label{catciclo1}
Let $\mathcal{A}$ be a dg-category. Consider the dg category $\mathrm{DgMod}(\mathcal{A})=\mathrm{DgFun}(\mathcal{A},\mathrm{DgMod}(K))$ we denote by $Z^{0}(\mathrm{DgMod}(\mathcal{A}))$ its underlying category
$$Z^{0}(\mathrm{DgMod}(\mathcal{A})):=\Big(\mathrm{DgMod}(\mathcal{A})\Big)_{0}$$
defined as follows:
\begin{enumerate}
\item [(a)] $Z^{0}(\mathrm{DgMod}(\mathcal{A}))$ has the same objects as $\mathrm{DgMod}(\mathcal{A})$.

\item [(b)] Now, given $M,N\in Z^{0}(\mathrm{DgMod}(\mathcal{A}))$ we define
$$\mathrm{Hom}_{Z^{0}(\mathrm{DgMod}(\mathcal{A}))}(M,N):=\Big\{\alpha\in \mathrm{DgNat}^{0}(M,N)\mid d(\alpha)=0\Big\}.$$
\end{enumerate}
\end{definition}

\begin{Remark}\label{notacionkeller}
\begin{enumerate}
\item [(a)]  Acoording to the notation given in  \cite{Keller2} in p. 4 and 5, for a  dg category $\mathcal{A}$ we have that  $\mathcal{C}_{dg}(\mathcal{A})=\mathrm{DgMod}(\mathcal{A})$ and 
$\mathcal{C}(A):=Z^{0}(\mathcal{C}_{dg}(\mathcal{A}))=Z^{0}(\mathrm{DgMod}(\mathcal{A}))$.

\item [(b)] It is well known that $Z^{0}(\mathrm{DgMod}(\mathcal{A}))$ is an abelian category (see for example  p. 5  in  \cite{Keller2}).
\end{enumerate}
\end{Remark}

We recall the definition of recollement between abelian categories.

\begin{definition}
Let $\mathcal{A}$, $\mathcal{B}$ and $\mathcal{C}$ be abelian categories. Then the diagram

$$\xymatrix{\mathcal{B}\ar[rr]|{i_{\ast}=i_{!}}  &  &\mathcal{A}\ar[rr]|{j^{!}=j^{\ast}}\ar@<-2ex>[ll]_{i^{\ast}}\ar@<2ex>[ll]^{i^{!}}  &   &\mathcal{C}\ar@<-2ex>[ll]_{j_{!}}\ar@<2ex>[ll]^{j_{\ast}}}$$
is called a $\textbf{recollement}$, if the additive functors $i^{\ast},i_{\ast}=i_{!}, i^{!},j_{!},j^{!}=j^{*}$ and $j_{*}$ satisfy the following conditions:
\begin{itemize}
\item[(R1)] $(i^{*},i_{*}=i_{!},i^{!})$ and $(j_{!},j^{!}=j^{*},j_{*})$ are adjoint  triples, i.e. $(i^{*},i_{*})$, $(i_{!},i^{!})$  $(j_{!},j^{!})$ and $(j^{*},j_{*})$ are adjoint pairs;
\item[(R2)] $\mathrm{Ker}(j^{*})=\mathrm{Im}(i_{*})$;
\item[(R3)] $i_{*}, j_{!},j_{*}$ are full embedding functors.
\end{itemize}
\end{definition}

\begin{theorem}
Let $\mathcal{C}$ be a dg-category with finite coproducts  and $\mathcal{B}$ a full differential graded subcategory  closed under coproducts. Consider the inclusion $j:\mathcal{B}\longrightarrow \mathcal{C}$ and the projection $\pi:\mathcal{C}\longrightarrow \mathcal{C}/\mathcal{I}_{\mathcal{B}}$. Then we have a recollement of abelian categories
$$\xymatrix{Z^{0}(\mathrm{DgMod}(\mathcal{C}/\mathcal{I}_{\mathcal{B}}))\ar[rr]|{(\pi_{\ast})_{0}}  &  & Z^{0}(\mathrm{DgMod}(\mathcal{C}))\ar[rr]|{(j^{\ast})_{0}}\ar@<-2ex>[ll]_{(\pi^{\ast})_{0}}\ar@<2ex>[ll]^{(\pi^{!})_{0}}  &   & Z^{0}(\mathrm{DgMod}(\mathcal{B}))\ar@<-2ex>[ll]_{(j_{!})_{0}}\ar@<2ex>[ll]^{(j_{\ast})_{0}}}$$
\end{theorem}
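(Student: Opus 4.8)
The plan is to deduce this abelian recollement from the dg-recollement of Theorem \ref{dg-recollement} by applying the underlying-category functor $Z^{0}(-)=(-)_{0}$ and checking that each of the three recollement axioms is preserved under this passage. The three categories $Z^{0}(\mathrm{DgMod}(\mathcal{C}/\mathcal{I}_{\mathcal{B}}))$, $Z^{0}(\mathrm{DgMod}(\mathcal{C}))$ and $Z^{0}(\mathrm{DgMod}(\mathcal{B}))$ are abelian by Remark \ref{notacionkeller}(b), and the six functors in the diagram, namely $(\pi^{\ast})_{0}$, $(\pi_{\ast})_{0}=(\pi_{!})_{0}$, $(\pi^{!})_{0}$, $(j_{!})_{0}$, $(j^{\ast})_{0}=(j^{!})_{0}$ and $(j_{\ast})_{0}$, are exactly the functors induced on underlying categories as in Proposition \ref{todopasasubyacentes} (the equalities $(\pi_{\ast})_{0}=(\pi_{!})_{0}$ and $(j^{\ast})_{0}=(j^{!})_{0}$ being immediate from $\pi_{\ast}=\pi_{!}$ and $j^{\ast}=j^{!}$ at the dg level); they are additive, e.g.\ because each of them will be exhibited as one half of an adjoint pair between abelian categories.

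First I would verify the axioms (R1) and (R3), which are purely formal. By Theorem \ref{dg-recollement} we have the dg-adjoint pairs $(\pi^{\ast},\pi_{\ast})$, $(\pi_{\ast},\pi^{!})$, $(j_{!},j^{\ast})$ and $(j^{\ast},j_{\ast})$ --- the outer four functors being the Kan extensions $\mathbb{L}\mathrm{an}_{\pi}(-)$, $\mathbb{R}\mathrm{an}_{\pi}(-)$, $\mathbb{L}\mathrm{an}_{j}(-)$, $\mathbb{R}\mathrm{an}_{j}(-)$ of Propositions \ref{existeLan} and \ref{existeRan}, adjoint to the restriction functors by Proposition \ref{dgadjuninduced}. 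Applying Proposition \ref{todopasasubyacentes}(a) to each of these dg-adjunctions yields the adjoint pairs $((\pi^{\ast})_{0},(\pi_{\ast})_{0})$, $((\pi_{\ast})_{0},(\pi^{!})_{0})$, $((j_{!})_{0},(j^{\ast})_{0})$ and $((j^{\ast})_{0},(j_{\ast})_{0})$ between abelian categories, i.e.\ the two adjoint triples required by (R1). For (R3), Theorem \ref{dg-recollement}(b) tells us that $\pi_{\ast}$, $j_{!}$ and $j_{\ast}$ are dg-fully faithful, so by Proposition \ref{todopasasubyacentes}(b) the functors $(\pi_{\ast})_{0}$, $(j_{!})_{0}$ and $(j_{\ast})_{0}$ are fully faithful; since $Z^{0}(-)$ leaves the class of objects untouched these are full embeddings (injectivity on objects for $(\pi_{\ast})_{0}$ being clear, as $\pi$ is the identity on objects and surjective on morphisms).

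The remaining axiom (R2) is the one point where some care is needed, and I expect it to be the main obstacle. One must see that replacing a dg-category of the form $\mathrm{DgMod}(-)$ by its underlying category alters neither the kernel nor the essential image appearing in the two triangular diagrams. The class of objects is unchanged and the functors $\pi_{\ast}$, $j^{\ast}$ agree on objects with their $Z^{0}$-versions, so the only thing to check is that the two pertinent notions of isomorphism coincide: if $\alpha$ is a closed degree-zero dg-natural transformation that is invertible in $\mathrm{DgMod}(\mathcal{C})$ (resp.\ $\mathrm{DgMod}(\mathcal{B})$), then a short computation using $d(\alpha)=0$ and the Leibniz rule for $d$ shows that $\alpha^{-1}$ is again closed and of degree zero, so dg-isomorphism coincides with isomorphism in $Z^{0}(\mathrm{DgMod}(\mathcal{C}))$ (resp.\ $Z^{0}(\mathrm{DgMod}(\mathcal{B}))$); moreover in both settings an object $G$ satisfies $G\simeq 0$ precisely when $G(B)=0$ for all $B$. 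Consequently $\mathrm{Ker}((j^{\ast})_{0})$ and $\mathrm{Ker}(j^{\ast})$ have the same objects, and so do $\mathrm{Im}((\pi_{\ast})_{0})$ and $\mathrm{Im}(\pi_{\ast})$, whence (R2) follows at once from Theorem \ref{dg-recollement}(a), which gives $\mathrm{Im}(\pi_{\ast})=\mathrm{Ker}(j^{\ast})$. Assembling (R1), (R2) and (R3) yields the asserted recollement of abelian categories; apart from this identification of isomorphism classes, every step is a direct appeal to Theorem \ref{dg-recollement} and Proposition \ref{todopasasubyacentes}.
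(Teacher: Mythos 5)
Your proposal is correct and follows exactly the route the paper takes: the paper's own proof is the single line ``It follows from Theorem \ref{dg-recollement} and Proposition \ref{todopasasubyacentes}'', and your argument simply spells out the verification of (R1), (R2) and (R3) that this citation leaves implicit. The extra care you take with (R2) — checking that isomorphism in the dg-category coincides with isomorphism in $Z^{0}$ so that kernels and essential images are unchanged — is a worthwhile detail the paper omits, but it is not a departure from its method.
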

\begin{proof}
It follows from Theorem \ref{dg-recollement} and Proposition \ref{todopasasubyacentes}.

\end{proof}

\section{Recollement in differential graded triangular matrix categories}
The goal of this section is to get  a generalization of a result given by Chen and Zheng in \cite[Theorem 4.4]{Chen}. So, firstly we recall the following definitions.
\begin{definition}\label{leftrigtrecol}
Let $\mathcal{A}$, $\mathcal{B}$ and $\mathcal{C}$ be dg-categories
\begin{itemize}
\item[(a)]  The diagram
$$\xymatrix{\mathcal{C}\ar@<-2ex>[rr]_{i_{\ast}} & & \mathcal{A}\ar@<-2ex>[rr]_{j^{!}}\ar@<-2ex>[ll]_{i^{\ast}} & &  \mathcal{B}\ar@<-2ex>[ll]_{j_{!}}}$$
is a called a  $\textbf{left}$ $\textbf{dg-recollement}$ if the dg-functors $i^{\ast},i_{\ast},j_{!}$ and $j^{!}$ satisfy the following conditions:
\begin{itemize}
\item[(LR1)] $(i^{\ast},i_{\ast})$ and $(j_{!},j^{!})$ are dg-adjoint pairs;
\item[(LR2)] $j^{!}i_{\ast}=0$;
\item[(LR3)] $i_{\ast},j_{!}$ are dg-fully faithful.
\end{itemize}

\item[(b)] The diagram
$$\xymatrix{\mathcal{C}\ar@<2ex>[rr]^{i_{!}} & & \mathcal{A}\ar@<2ex>[rr]^{j^{\ast}}\ar@<2ex>[ll]^{i^{!}} & &  \mathcal{B}\ar@<2ex>[ll]^{j_{\ast}}}$$
is  called a  $\textbf{right}$ $\textbf{dg-recollement}$ if the dg-functors $i_{!},i^{!},j^{\ast}$  and $j_{\ast}$ satisfy  the following 
conditions:
\begin{itemize}
\item[(RR1)] $(i_{!},i^{!})$ and $(j^{\ast},j_{\ast})$ are adjoint pairs;
\item[(RR2)] $j^{\ast}i_{!}=0$;
\item[(RR3)] $i_{!},j_{\ast}$ are dg-fully faithful.
\end{itemize}

\end{itemize}
\end{definition}

In \cite{PaezSandovalSan} the notion of differential graded triangular matrix category was introduced. For convenience of the reader, we recall briefly these concepts.  Let $\mathcal{R}$, $\mathcal{T}$ be  dg-categories and $M\in\mathrm{DgMod}(\mathcal R\otimes\mathcal {T}^{op})$,  the $\textbf{dg-triangular matrix}$ $\textbf{category}$  $\mathbf{\Lambda}=\left[ \begin{smallmatrix}
\mathcal{T} & 0 \\ M & \mathcal{R}
\end{smallmatrix}\right]$ is defined  as follows.

\begin{enumerate}
\item [(a)] The class of objects of this category are matrices $ \left[
\begin{smallmatrix}
T & 0 \\ M & R
\end{smallmatrix}\right]  $ where the objects $ T $ and $R$ are in  $\mathcal{T} $ and $\mathcal{R} $ respectively.

\item [(b)] Given a pair of objects
$\left[ \begin{smallmatrix}
T & 0 \\
M & R
\end{smallmatrix} \right] ,  \left[ \begin{smallmatrix}
T' & 0 \\
M & R'
\end{smallmatrix} \right]$ in
$\mathbf{\Lambda}$ , 

$$\mathrm{Hom}_{\mathbf{\Lambda}}\left (\left[ \begin{smallmatrix}
T & 0 \\
M & R
\end{smallmatrix} \right] ,  \left[ \begin{smallmatrix}
T' & 0 \\
M & R'
\end{smallmatrix} \right]  \right)  := \left[ \begin{smallmatrix}
\mathrm{Hom}_{\mathcal{T}}(T,T') & 0 \\
M(R',T) & \mathrm{Hom}_{\mathcal{R}}(R,R')
\end{smallmatrix} \right].$$
\end{enumerate}
The composition is given by
\begin{eqnarray*}
\circ&:&\left[  \begin{smallmatrix}
{\mathcal{T}}(T',T'') & 0 \\
M(R'',T') & {\mathcal{R}}(R',R'')
\end{smallmatrix}  \right] \times \left[
\begin{smallmatrix}
{\mathcal{T}}(T,T') & 0 \\
M(R',T) & {\mathcal{R}}(R,R')
\end{smallmatrix} \right]\longrightarrow\left[
\begin{smallmatrix}
{\mathcal{T}}(T,T'') & 0 \\
M(R'',T) & {\mathcal{R}}(R,R'')\end{smallmatrix} \right] \\
&& \left( \left[ \begin{smallmatrix}
t_{2} & 0 \\
m_{2} & r_{2}
\end{smallmatrix} \right], \left[
\begin{smallmatrix}
t_{1} & 0 \\
m_{1} & r_{1}
\end{smallmatrix} \right]\right)\longmapsto\left[
\begin{smallmatrix}
t_{2}\circ t_{1} & 0 \\
m_{2}\bullet t_{1}+r_{2}\bullet m_{1} & r_{2}\circ r_{1}
\end{smallmatrix} \right].
\end{eqnarray*}
We recall that for homogenous elements we have that $ m_{2}\bullet t_{1}:=(-1)^{|m_{2}|\cdot |t_{1}|}M(1_{R''}\otimes t_{1}^{op})(m_{2})$ and
$r_{2}\bullet m_{1}=M(r_{2}\otimes 1_{T})(m_{1})$,
and given 
an object $\left[
\begin{smallmatrix}
T & 0 \\
M & R
\end{smallmatrix} \right]\in \mathbf{\Lambda}$, the identity morphism is given by $1_{\left[
\begin{smallmatrix}
T & 0 \\
M & R
\end{smallmatrix} \right]}:=\left[
\begin{smallmatrix}
1_{T} & 0 \\
0 & 1_{R}
\end{smallmatrix} \right].$\\

In \cite[Theorem 3.19]{PaezSandovalSan} it is proved the following  result.

\begin{theorem}\label{equivalenceLEOS}
Let $\mathcal{R}$ and $\mathcal{T}$  be dg-categories and $ M\in \mathrm{DgMod}(\mathcal{R}\otimes \mathcal{T}^{op})$. Then,  there exists a dg-functor $\mathbb{G}_{1}:\mathrm{DgMod}(\mathcal{R})\longrightarrow \mathrm{DgMod}(\mathcal{T})$ for which there is an equivalence of  dg-categories 
$$\Big( \mathrm{DgMod}(\mathcal T), \mathbb{G}_{1}\mathrm{DgMod}(\mathcal R)\Big)\cong\mathrm{DgMod}\big(\left[ \begin{smallmatrix}
\mathcal T & 0 \\
M & \mathcal R
\end{smallmatrix} \right] \big).$$
\end{theorem}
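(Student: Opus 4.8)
The plan is to realize the equivalence by hand, as a differential graded incarnation of the classical comma-category description of modules over a triangular matrix ring. First I would pin down the functor: for $Y\in\mathrm{DgMod}(\mathcal R)$ and $T\in\mathcal T$ put
$$\mathbb{G}_{1}(Y)(T):=\mathrm{Hom}_{\mathrm{DgMod}(\mathcal R)}\big(M(-,T),Y\big)=\int_{R\in\mathcal R}\big[M(R,T),Y(R)\big],$$
which is legitimate by Corollary \ref{construcciondgnat} and Proposition \ref{descripendModK}; since $M\in\mathrm{DgMod}(\mathcal R\otimes\mathcal T^{op})$ is contravariant in its $\mathcal T$-argument, $T\mapsto\mathbb{G}_{1}(Y)(T)$ is covariant, and one checks routinely (as in the proofs of Section 3) that $\mathbb{G}_{1}\colon\mathrm{DgMod}(\mathcal R)\longrightarrow\mathrm{DgMod}(\mathcal T)$ is a dg-functor. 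Together with the tensor--hom adjunction $\mathrm{Hom}_{\mathcal R}(M\otimes_{\mathcal T}X,Y)\cong\mathrm{Hom}_{\mathcal T}(X,\mathbb{G}_{1}(Y))$ --- obtained by the same coend/Fubini manipulation used in Propositions \ref{tesorcasofacil} and \ref{unacomposicion} --- this shows that specifying a closed degree-zero morphism $X\to\mathbb{G}_{1}(Y)$ is the same as specifying a closed degree-zero morphism $M\otimes_{\mathcal T}X\to Y$, so the comma category $(\mathrm{DgMod}(\mathcal T),\mathbb{G}_{1}\mathrm{DgMod}(\mathcal R))$, whose objects are triples $(X,Y,f)$ with $f\colon X\to\mathbb{G}_{1}(Y)$ (equivalently the data $M\otimes_{\mathcal T}X\to Y$), is the natural target.

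Next I would analyse a dg-module $F\colon\Lambda\longrightarrow\mathrm{DgMod}(K)$ directly. For each object $\left[\begin{smallmatrix}T&0\\M&R\end{smallmatrix}\right]$ the endomorphisms $e_{1}:=\left[\begin{smallmatrix}1_{T}&0\\0&0\end{smallmatrix}\right]$ and $e_{2}:=\left[\begin{smallmatrix}0&0\\0&1_{R}\end{smallmatrix}\right]$ are closed, degree-zero, orthogonal idempotents with $e_{1}+e_{2}=1$, so $F\left[\begin{smallmatrix}T&0\\M&R\end{smallmatrix}\right]$ splits canonically as a direct sum of dg $K$-modules. Using the morphisms $\left[\begin{smallmatrix}1_{T}&0\\0&0\end{smallmatrix}\right]\colon\left[\begin{smallmatrix}T&0\\M&R\end{smallmatrix}\right]\to\left[\begin{smallmatrix}T&0\\M&R'\end{smallmatrix}\right]$, whose images under $F$ restrict to isomorphisms on the first summands, one shows that this first summand depends on $R$ only up to canonical isomorphism and assembles into a dg-functor $X_{F}\in\mathrm{DgMod}(\mathcal T)$; the analogous morphisms $\left[\begin{smallmatrix}0&0\\0&1_{R}\end{smallmatrix}\right]$ give $Y_{F}\in\mathrm{DgMod}(\mathcal R)$ from the second summands. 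The remaining morphisms $\left[\begin{smallmatrix}0&0\\m&0\end{smallmatrix}\right]\colon\left[\begin{smallmatrix}T&0\\M&R\end{smallmatrix}\right]\to\left[\begin{smallmatrix}T'&0\\M&R'\end{smallmatrix}\right]$ with $m\in M(R',T)$ satisfy $e_{1}\circ\left[\begin{smallmatrix}0&0\\m&0\end{smallmatrix}\right]=0=\left[\begin{smallmatrix}0&0\\m&0\end{smallmatrix}\right]\circ e_{2}$, so their images under $F$ are precisely off-diagonal maps $X_{F}(T)\to Y_{F}(R')$, linear and dinatural in $m$, i.e.\ a morphism $\varphi_{F}\colon M\otimes_{\mathcal T}X_{F}\to Y_{F}$ in $\mathrm{DgMod}(\mathcal R)$; its transpose under the adjunction above is $f_{F}\colon X_{F}\to\mathbb{G}_{1}(Y_{F})$. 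This assigns to $F$ an object $(X_{F},Y_{F},f_{F})$ of the comma category, and the same idempotent-splitting applied to a dg-natural transformation $\eta\colon F\to F'$ gives its image on morphisms; one then checks that the resulting $\Phi\colon\mathrm{DgMod}(\Lambda)\to(\mathrm{DgMod}(\mathcal T),\mathbb{G}_{1}\mathrm{DgMod}(\mathcal R))$ respects composition and differentials, hence is a dg-functor.

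Conversely I would construct $\Psi$ in the other direction: from a triple $(X,Y,f)$ with associated $\varphi\colon M\otimes_{\mathcal T}X\to Y$, set $F_{(X,Y,f)}\left[\begin{smallmatrix}T&0\\M&R\end{smallmatrix}\right]:=X(T)\oplus Y(R)$ and let a morphism $\left[\begin{smallmatrix}t&0\\m&r\end{smallmatrix}\right]$ act as the triangular matrix $\left[\begin{smallmatrix}X(t)&0\\ \varphi_{m}&Y(r)\end{smallmatrix}\right]$, where $\varphi_{m}\colon X(T)\to Y(R')$ is obtained from $m$ by the coend projection into $M\otimes_{\mathcal T}X$ followed by $\varphi$. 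One verifies that the composition law of $\Lambda$ --- including its Koszul sign $(-1)^{|m_{2}||t_{1}|}$ and the operations $\bullet$ --- together with the dg-functor axioms for $X$ and $Y$ and the dg-naturality of $\varphi$ make $F_{(X,Y,f)}$ a dg-functor, and one extends $\Psi$ to morphisms. Finally the evident natural isomorphisms $\Phi\Psi\cong\mathrm{id}$ and $\Psi\Phi\cong\mathrm{id}$, built from the idempotent decomposition and the unit/counit of the adjunction, are checked to be closed, of degree zero, and dg-natural, yielding the asserted equivalence of dg-categories.

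I expect the main obstacle to be the coordination of signs: the composition in $\Lambda$, the signs hidden in $M\in\mathrm{DgMod}(\mathcal R\otimes\mathcal T^{op})$ (through $\mathcal T^{op}$ and the actions $m\bullet t$, $r\bullet m$), the factor $(-1)^{nm}$ in the definition of dg-natural transformations, and the signs in the coend description of $M\otimes_{\mathcal T}X$ and in the tensor--hom adjunction must all be matched so that $\Phi$ and $\Psi$ genuinely commute with differentials and composition; essentially all the real work lies in this verification, the underlying combinatorics being a routine transcription of the triangular-matrix-ring case. A secondary point requiring care is the precise dg-structure on the hom-complexes of the comma category $(\mathrm{DgMod}(\mathcal T),\mathbb{G}_{1}\mathrm{DgMod}(\mathcal R))$ as defined in \cite{PaezSandovalSan} --- in particular how its differential interacts with the structure maps $f$ --- which must be reconciled with the end $\int_{P\in\Lambda}[F(P),F'(P)]$ computing $\mathrm{DgNat}(F,F')$ in $\mathrm{DgMod}(\Lambda)$ via Corollary \ref{construcciondgnat}.
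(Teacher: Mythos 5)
The paper does not prove this theorem itself but imports it from \cite[Theorem 3.19]{PaezSandovalSan}, and your construction agrees with what the paper records: your $\mathbb{G}_{1}$ is exactly the functor recalled in Remark \ref{defofG1}, and your route (splitting $F\left[ \begin{smallmatrix} T & 0 \\ M & R \end{smallmatrix} \right]$ by the two closed degree-zero idempotents, packaging the off-diagonal morphisms as $M\otimes_{\mathcal T}X_{F}\to Y_{F}$, and transposing via the dg tensor--hom adjunction into the comma category of Definition \ref{commadgcat}) is the standard dg version of the triangular-matrix-ring correspondence used in the cited source and its precursors \cite{LGOS1,LGOS2}. The outline is correct, and the remaining work is precisely the sign bookkeeping you identify.
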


In the remark below we briefly recall the definition of the functor $\mathbb{G}_{1}$ mentioned above. Next, we will construct other functor $\mathbb{G}_{2}$ in a similar way as for $\mathbb{G}_{1}$ and these functors will play an importan role in the proof of the main result of this section.

\begin{Remark}\label{defofG1}
Let $\mathcal{R,S,T}$ be dg-categories  and consider   $ M \in \mathrm{DgMod}(\mathcal{R}\otimes\mathcal{T}^{op})$. For all $ T\in \mathcal{T} $ we have the functor $ M_{T} :\mathcal{R}\longrightarrow \mathrm{DgMod}(K)$ defined as follows:

\begin{enumerate}
\item $M_{T}(R):=M(R,T) $, for all $ R\in \mathcal{R} $.
\item $M_{T}(r):=M(r\otimes 1_{T}): M_{T}(R)\longrightarrow M_{T}(R')$, for all $ r\in  \mathrm{Hom}_{\mathcal{R}}(R,R'). $
\end{enumerate}
Also for all $ t\in  \mathrm{Hom}_{\mathcal{T}}(T,T') $ homogeneous of degree $|t|$ we have a dg-natural transformation of degree $|t|$ (see Definition \ref{dgnaturaltrans}), $ \bar{t}:M_{T'}\longrightarrow M_{T} $ such that $ \bar{t}=\lbrace [\bar{t}]_{R}:M_{T'}(R)\longrightarrow M_{T}(R)\rbrace_{ R\in \mathcal{R}} $ where $ [\bar{t}]_{R}=M(1_{R}\otimes t^{op}):M(R,T')\longrightarrow M(R,T) $.\\
So we have the dg-functor $\mathbb{G}_{1}:\mathrm{DgMod}(\mathcal{R})\longrightarrow \mathrm{DgMod}(\mathcal{T}) $ as follows:
\begin{enumerate}
 \item For  $B \in \mathrm{DgMod}(\mathcal{R})$, $\mathbb{G}_{1}(B)(T)=\Ho_{\mathrm{DgMod}(\mathcal{R})}(M_{T},B)$ for all $T \in \T$. Now, for  $t: T \longrightarrow T'$ a homogeneous morphism in $\T$ we have $\mathbb{G}_{1}(B)(t):\Ho_{\mathrm{DgMod}(\mathcal{R})}(M_{T},B) \longrightarrow \Ho_{\mathrm{DgMod}(\mathcal{R})}(M_{T'},B)$  and  for each $\eta: M_{T} \longrightarrow B$  homogeneous morphism of degree $|\eta|$  we have $\mathbb{G}_{1}(B)(t)(\eta):= \Ho (\overline{t}, B)(\eta)$ $= (-1)^{|\eta||\overline{t}|} \eta \circ \overline{t}$.
 
 \item  Let $\varepsilon: B \longrightarrow B'$ be a  dg-natural transformation of dg $\mathcal{R}$-modules of degree $|\varepsilon|$. We assert that  $\mathbb{G}_{1}(\varepsilon):\mathbb{G}_{1}(B) \longrightarrow \mathbb{G}_{1}(B')$ is a dg-natural transformation in $\mathrm{DgMod}(\T)$ of degree $|\varepsilon|$. Indeed, for each  $T \in \T$ we have that $[\mathbb{G}_{1}(\varepsilon)]_{T}:= \Ho_{\mathrm{DgMod}(\mathcal{R})}(M_{T}, \varepsilon):  \Ho_{\mathrm{DgMod}(\mathcal{R})}(M_{T}, B)\longrightarrow  \Ho_{\mathrm{DgMod}(\mathcal{R})}(M_{T}, B')$. Also remember that the following  diagram  commutes up to the sign $(-1)^{|\varepsilon| |t|}$:
 $$\xymatrix{ \Ho_{\mathrm{DgMod}(\mathcal{R})}(M_{T},B)\ar[rr]_{\Ho_{\mathrm{DgMod}(\mathcal{R})} (M_{T},\varepsilon)} \ar[dd]_{\Ho_{\mathrm{DgMod}(\mathcal{R})} (\overline{t},B)}& &\Ho_{\mathrm{DgMod}(\mathcal{R})}(M_{T},B') \ar[dd]_{\Ho_{\mathrm{DgMod}(\mathcal{R})} (\overline{t},B')} \\
  & & \\
   \Ho_{\mathrm{DgMod}(\mathcal{R})}(M_{T'},B)\ar[rr]_{\Ho_{\mathrm{DgMod}(\mathcal{R})}(M_{T'},\varepsilon)}& & \Ho_{\mathrm{DgMod}(\mathcal{R})}(M_{T'},B').}$$
   for each homogeneous morphism  $t: T \longrightarrow T'$ in  $\T$. As a result $$\mathbb{G}_{1}(\varepsilon)=\big \{ [\mathbb{G}_{1}(\varepsilon)]_{T}:= \Ho_{\mathrm{DgMod}(\mathcal{R})}(M_{T},\varepsilon): \mathbb{G}_{1}(B)(T) \longrightarrow \mathbb{G}_{1}(B')(T) \big \}_{T \in \T}$$
is dg-natural transformation of degree $|\varepsilon|$.
 \end{enumerate}
\end{Remark}

\begin{definition}\label{otrobimodulo}
Let $F:\mathrm{DgMod}(\mathcal{R})\rightarrow \mathrm{DgMod}(\mathcal{S}) $ a dg-functor and $M\in\mathrm{DgMod}(\mathcal{R}\otimes \mathcal{T}^{op})$. We define a dg-bimodule in 
 $\mathrm{DgMod}(\mathcal{S}\otimes\mathcal{T}^{op})$ denoted  by $N:=F(M)$ as follows, the functor $N=F (M) :\mathcal{S}\otimes\mathcal{T}^{op}\rightarrow \mathrm{DgMod}(K)$ is given by:\\
(i) $N(S,T):=F(M_{T})(S)$ for all $(S,T)\in \mathcal{S}\otimes\mathcal{T}^{op}$.\\
 (ii) Let $g\otimes t^{op}:(S,T)\rightarrow (S',T')$ where $g:S\rightarrow S'$ in $\mathcal{S}$ and $t:T'\rightarrow T$ in $\mathcal{T}$ are homogenous. Since $\bar{t}:M_{T}\rightarrow M_{T'}$ is a morphism of $\mathcal{R}$-modules of degree $|t|$, then $F(\bar{t}):F(M_{T})\rightarrow F(M_{T'})$ is a dg-natural transformation of degree $|t|$ in $\mathrm{DgMod}(\mathcal{S})$. Thus we have the following commutative diagram up to the sign $(-1)^{|t||g|}$
      \[
      \begin{diagram}
      \node{F(M_{T})(S)}\arrow{e,t}{[F(\bar{t})]_{S}}\arrow{s,l}{F(M_{T})(g)}
       \node{F(M_{T'})(S)}\arrow{s,r}{F(M_{T'})(g)}\\
      \node{F(M_{T})(S')}\arrow{e,b}{[F(\bar{t})]_{S'}}
       \node{F(M_{T'})(S')}
      \end{diagram}
      \]
Hence we define $N(g\otimes t^{op}):=F(M_{T'})(g)\circ [F(\bar{t})]_{S}=(-1)^{|t||g|}[F(\bar{t})]_{S'}\circ F(M_{T})(g).$
\end{definition}

Now, that we have a dg-bimodule $N\in \mathrm{DgMod}(\mathcal{S}\otimes \mathcal{T}^{op})$ and with this bimodule we define a functor $\mathbb{G}_{2}$ similar to $\mathbb{G}_{1}$. For convenience of the reader  we repeat it's construction.

\begin{Remark}\label{defofG2}
The dg-functor $\mathbb{G}_{2}:\mathrm{DgMod}(\mathcal{S})\longrightarrow \mathrm{DgMod}(\mathcal{T}) $ is defined as follows.
\begin{enumerate}
 \item For  $L \in \mathrm{DgMod}(\mathcal{S})$, $\mathbb{G}_{2}(L)(T)=\Ho_{\mathrm{DgMod}(\mathcal{S})}(N_{T},L)$ for all $T \in \T$, where $N_{T}:=F(M_{T})\in \mathrm{DgMod}(\mathcal{S})$. Now, for  $t: T' \longrightarrow T$ a homogeneous morphism in $\T$ we have $\mathbb{G}_{2}(L)(t):\Ho_{\mathrm{DgMod}(\mathcal{S})}(N_{T'},L) \longrightarrow \Ho_{\mathrm{DgMod}(\mathcal{S})}(N_{T},L)$  and  for each $\eta: N_{T'} \longrightarrow L$  homogeneous morphism of degree $|\eta|$  we have $\mathbb{G}_{2}(L)(t)(\eta):= \Ho_{\mathrm{DgMod}(\mathcal{S})} (F(\overline{t}), L)(\eta)$ $= (-1)^{|\eta||\overline{t}|} \eta \circ F(\overline{t})$.
 
 \item  Let $\gamma: L \longrightarrow L'$ be a  dg-natural transformation of dg $\mathcal{S}$-modules of degree $|\gamma|$. We assert that  $\mathbb{G}_{2}(\gamma):\mathbb{G}_{2}(L) \longrightarrow \mathbb{G}_{2}(L')$ is a dg-natural transformation in $\mathrm{DgMod}(\T)$ of degree $|\gamma|$.\\
Indeed, for each  $T \in \T$ we have that $[\mathbb{G}_{2}(\gamma)]_{T}:= \Ho_{\mathrm{DgMod}(\mathcal{S})}(N_{T}, \gamma):  \Ho_{\mathrm{DgMod}(\mathcal{S})}(N_{T}, L)\longrightarrow  \Ho_{\mathrm{DgMod}(\mathcal{S})}(N_{T}, L')$. Also remember that the following  diagram  commutes up to the sign $(-1)^{|\gamma| |t|}$:
 $$\xymatrix{ \Ho_{\mathrm{DgMod}(\mathcal{S})}(N_{T'},L)\ar[rr]_{\Ho_{\mathrm{DgMod}(\mathcal{S})} (N_{T'},\gamma)} \ar[dd]_{\Ho_{\mathrm{DgMod}(\mathcal{S})} (F(\overline{t}),L)}& &\Ho_{\mathrm{DgMod}(\mathcal{S})}(N_{T'},L') \ar[dd]_{\Ho_{\mathrm{DgMod}(\mathcal{S})} (F(\overline{t}),L')} \\
  & & \\
   \Ho_{\mathrm{DgMod}(\mathcal{S})}(N_{T},L)\ar[rr]_{\Ho_{\mathrm{DgMod}(\mathcal{S})}(N_{T},\gamma)}& & \Ho_{\mathrm{DgMod}(\mathcal{S})}(N_{T},L').}$$
   for each homogeneous morphism  $t: T' \longrightarrow T$ in  $\T$. As a result $$\mathbb{G}_{2}(\gamma)=\big \{ [\mathbb{G}_{2}(\gamma)]_{T}:= \Ho_{\mathrm{DgMod}(\mathcal{S})}(N_{T},\gamma): \mathbb{G}_{2}(L)(T) \longrightarrow \mathbb{G}_{2}(L')(T) \big \}_{T \in \T}$$
is dg-natural transformation of degree $|\gamma|$.
 \end{enumerate}
\end{Remark}

Since $N=F(M) \in \mathrm{DgMod}(\mathcal{S}\otimes \mathcal{T}^{op})$, we construct the dg comma category $ \Big( \mathrm{DgMod}(\mathcal{T}),\mathbb{G}_{2}\mathrm{DgMod}(\mathcal{S})\Big)$, 
and we have an equivalence of dg-categories
$$\Big(\mathrm{DgMod}(\mathcal{T}),\mathbb{G}_{2}\mathrm{DgMod}(\mathcal{S})\Big)\xrightarrow{\sim}\mathrm{DgMod}(\left[ \begin{smallmatrix}
\mathcal{T} & 0 \\
F(M)& \mathcal{S}
\end{smallmatrix} \right] ).$$
\\

Suppose that we have two dg-functors $F:\mathrm{DgMod}(\mathcal{R})\rightarrow \mathrm{DgMod}(\mathcal{S})$ and $G:\mathrm{DgMod}(\mathcal{S})\rightarrow \mathrm{DgMod}(\mathcal{R})$ and a dg-bimodule $M\in\mathrm{DgMod}(\mathcal{R}\otimes \mathcal{T}^{op})$.\\
Then, for all $B\in \mathrm{DgMod}(\mathcal{R})$ and $T\in \mathcal{T}$, $F$ defines a mapping of complexes (that is, a morphism  in $\mathrm{DgMod}_{str}(K)$)
\[
F_{M_{T},B}:\mathrm{Hom}_{\mathrm{DgMod}(\mathcal{R})}(M_{T},B)\longrightarrow \mathrm{Hom}_{\mathrm{DgMod}(\mathcal{S})}(FM_{T},FB), \ f\mapsto F(f).
\] 
Similarly, for all $ L\in \mathrm{DgMod}(\mathcal{S})$ and $T\in \mathcal{T}$, $ G $ defines a mapping of complexes
\[
G_{N_{T},L}:\mathrm{Hom}_{\mathrm{DgMod}(\mathcal{S})}(N_{T},L)\longrightarrow \mathrm{Hom}_{\mathrm{DgMod}(\mathcal{R})}(GN_{T},GL), \ g\mapsto G(g)
\] 
where $N=F(M) \in \mathrm{DgMod}(\mathcal{S}\otimes \mathcal{T}^{op})$ is as given in Definition \ref{otrobimodulo}.\\

In this way we have the following Lemma which is a generalization  \cite[Lemma 4.6]{LGOS2}.

\begin{lemma}\label{RecollH}
Let $M\in \mathrm{DgMod}(\mathcal{R}\otimes \mathcal{T}^{op})$  be. Consider two dg-functors $F:\mathrm{DgMod}(\mathcal{R})\rightarrow \mathrm{DgMod}(\mathcal{S})$ and $G:\mathrm{DgMod}(\mathcal{S})\rightarrow \mathrm{DgMod}(\mathcal{R})$. Then, the following conditions  hold.
\begin{itemize}
\item[(a)] For all $B\in \mathrm{DgMod}(\mathcal{R})$,  the family $F_{M,B}:=\{F_{M_{T},B}:\mathbb{G}_1(B)(T)\rightarrow (\mathbb{G}_2 \circ F)(B)(T)\}_{T\in\mathcal T}$ is dg-natural transformation of degree zero.\\ 
That is, $F_{M,B}:\mathbb{G}_1(B)\rightarrow (\mathbb{G}_2\circ F)(B)$ satisfies the following
$F_{M,B}\in \mathrm{Hom}_{\mathrm{DgMod}(\mathcal{T})}^{0}\Big(\mathbb{G}_1(B),  (\mathbb{G}_2\circ F)(B)\Big)$.
                
\item[(b)]  $ \xi:=F_{M,-}:\mathbb{G}_{1}\longrightarrow \mathbb{G}_{2}\circ F $ is a closed and degree zero dg-natural transformation in $\mathrm{DgFun}\Big(\mathrm{DgMod}(\mathcal{R}), \mathrm{DgMod}(\mathcal{T})\Big)$.
\end{itemize}

\begin{itemize}
\item[(c)] Suppose that $M_{T}=G(N_{T})$ for all $T\in \mathcal{T}$.  Then for all $L\in \mathrm{DgMod}(\mathcal{S}) $,  the family $G_{N,L}:=\{G_{N_{T},L}:\mathbb{G}_2(L)(T)\rightarrow (\mathbb{G}_1 \circ G)(L)(T)\}_{T\in\mathcal T}$ is a dg-natural transformation of degree zero. That is, $G_{N,L}:\mathbb{G}_2(L)\rightarrow (\mathbb{G}_1\circ G)(L)$ satisfies the following
$G_{N,L}\in \mathrm{Hom}_{\mathrm{DgMod}(\mathcal{T})}^{0}\Big(\mathbb{G}_2(L),  (\mathbb{G}_1\circ G)(L)\Big)$.

\item[(d)] $\rho:=G_{N,-}=\mathbb{G}_{2}\longrightarrow \mathbb{G}_{1}\circ G$ is a closed and degree zero dg-natural transformation in  $\mathrm{DgFun}\Big(\mathrm{DgMod}(\mathcal{S}), \mathrm{DgMod}(\mathcal{T})\Big)$.
\end{itemize}
\end{lemma}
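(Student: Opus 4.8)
The plan is to reduce all four items to a single observation: for any dg-functor $\Phi$ between dg-module categories, ``applying $\Phi$ to a $\mathrm{Hom}$-complex'' is a degree-zero chain map that is compatible with pre- and post-composition; items (a)--(b) are this observation for $\Phi=F$, and items (c)--(d) for $\Phi=G$.

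First I would prove (a). Fix $B\in\mathrm{DgMod}(\mathcal R)$ and $T\in\mathcal T$. Since $N_{T}=F(M_{T})$, the map $F_{M_T,B}$ sends $f\colon M_{T}\to B$ to $F(f)\colon N_{T}\to F(B)$; as $F$ is a graded functor it preserves degrees, so $F_{M_T,B}$ is a degree-$0$ morphism from $\mathbb{G}_{1}(B)(T)=\mathrm{Hom}_{\mathrm{DgMod}(\mathcal R)}(M_{T},B)$ to $(\mathbb{G}_{2}\circ F)(B)(T)=\mathrm{Hom}_{\mathrm{DgMod}(\mathcal S)}(N_{T},F(B))$, and since $F$ commutes with differentials it is a chain map (the ``mapping of complexes'' recorded just before the statement). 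It then remains to verify naturality in $T$: for a homogeneous $t$ in $\mathcal T$, $\mathbb{G}_{1}(B)$ sends $t$ to $\eta\mapsto(-1)^{|\eta||\bar t|}\,\eta\circ\bar t$ by Remark \ref{defofG1}, while by Definition \ref{otrobimodulo} and Remark \ref{defofG2} the functor $(\mathbb{G}_{2}\circ F)(B)=\mathbb{G}_{2}(F(B))$ sends $t$ to $\zeta\mapsto(-1)^{|\zeta||F(\bar t)|}\,\zeta\circ F(\bar t)$; applying $F_{M_T,B}$ and using $F(\eta\circ\bar t)=F(\eta)\circ F(\bar t)$ together with $|F(\eta)|=|\eta|$ and $|F(\bar t)|=|\bar t|$ makes the comparison square commute on the nose. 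Strict commutativity is exactly what is required, because the comparison transformation has degree $0$, so the sign $(-1)^{nm}$ of Definition \ref{dgnaturaltrans} is trivial. This proves (a).

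For (b), I would regard $\xi=\{F_{M,B}\}_{B}$ as a family of degree-$0$ morphisms $\mathbb{G}_{1}(B)\to(\mathbb{G}_{2}\circ F)(B)$ in $\mathrm{DgMod}(\mathcal T)$, which is precisely (a). Closedness $d(\xi)=0$ is checked componentwise: each component $(\xi_{B})_{T}=F_{M_T,B}$ is a chain map, so its differential in $\mathrm{Hom}_{\mathrm{DgMod}(K)}$ vanishes, whence $d(\xi_{B})=0$ for every $B$, i.e.\ $d(\xi)=0$. Naturality of $\xi$ in $B$ is again strict commutativity, and it follows from $F$ respecting post-composition: for homogeneous $\varepsilon\colon B\to B'$ the $T$-components of $\mathbb{G}_{1}(\varepsilon)$ and $\mathbb{G}_{2}(F(\varepsilon))$ are post-composition with $\varepsilon$ and with $F(\varepsilon)$ respectively (Remarks \ref{defofG1} and \ref{defofG2}), so $F_{M_T,B'}(\varepsilon\circ\eta)=F(\varepsilon)\circ F(\eta)=[\mathbb{G}_{2}(F(\varepsilon))]_{T}\big(F_{M_T,B}(\eta)\big)$. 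Hence $\xi$ is a closed and degree-zero dg-natural transformation in $\mathrm{DgFun}\big(\mathrm{DgMod}(\mathcal R),\mathrm{DgMod}(\mathcal T)\big)$.

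Items (c) and (d) then run the same argument with $G$ and $N_{T}$ in place of $F$ and $M_{T}$: $G_{N_T,L}\colon g\mapsto G(g)$ is a degree-$0$ chain map $\mathrm{Hom}_{\mathrm{DgMod}(\mathcal S)}(N_{T},L)\to\mathrm{Hom}_{\mathrm{DgMod}(\mathcal R)}(GN_{T},GL)$, and the hypothesis $M_{T}=G(N_{T})$ identifies the target with $\mathrm{Hom}_{\mathrm{DgMod}(\mathcal R)}(M_{T},GL)=\mathbb{G}_{1}(GL)(T)=(\mathbb{G}_{1}\circ G)(L)(T)$; closedness and naturality in $L$ go through verbatim. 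I expect the only genuinely delicate step to be naturality in $T$ here: $\mathbb{G}_{2}(L)$ sends $t$ to post-composition with $F(\bar t)$ while $\mathbb{G}_{1}(GL)$ sends $t$ to post-composition with $\bar t$, so the square closes precisely because, under the identification $G(N_{\bullet})=M_{\bullet}$, the functor $G$ carries $F(\bar t)$ to $\bar t$ --- that is, this is exactly the point where the hypothesis $M_{T}=G(N_{T})$ is used, read so as to be compatible with the transition maps $\bar t$ (for instance when $G\circ F$ restricts to the identity on the objects $M_{T}$). Everything else is the same sign bookkeeping already carried out in Remarks \ref{defofG1} and \ref{defofG2}.
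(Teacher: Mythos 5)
Your proposal is correct and follows essentially the same route as the paper: degree-zero chain-map property of $F_{M_T,B}$ from $F$ preserving degrees and commuting with differentials, strict commutativity of the naturality squares in $T$ and in $B$ via functoriality of $F$ (with the sign $(-1)^{|\varphi||t|}$ cancelling on both sides), closedness checked componentwise, and the symmetric argument for $G$. Your observation that (c)--(d) additionally need $G(F(\overline{t}))=\overline{t}$ under the identification $M_T=G(N_T)$ is a point the paper passes over with ``similar to (a) and (b)''; it is indeed supplied in the only application (Lemma \ref{RecollG}), where the unit of $(F,G)$ is the identity on each $M_T$, so naturality of the unit forces exactly this compatibility.
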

\begin{proof}
Since $F(M_{T})=N_{T}$,  for all $ B\in \mathrm{DgMod}(\mathcal{R}) $  and $T\in\mathcal T$  we have $\mathbb{G}_1(B)(T)=\mathrm{Hom}_{\mathrm{DgMod}(\mathcal{R})}(M_{T},B)$ and
 $(\mathbb{G}_2\circ F)(B)(T)=\mathrm{Hom}_{\mathrm{DgMod}(\mathcal{S})}(F(M_{T}),F(B))$.\\
(a) First, let us see that $F_{M,B}:\mathbb{G}_{1}(B)\longrightarrow \mathbb{G}_{2}F(B)$ is  such that for each $T\in \mathcal{T}$ the component
$$F_{M_{T},B}:\mathbb{G}_{1}(B)(T)\longrightarrow \mathbb{G}_{2}F(B)(T)$$
satisfies that $F_{M_{T},B}\in \mathrm{Hom}_{\mathrm{DgMod}(K)}^{0}\Big(\mathbb{G}_{1}(B)(T),\mathbb{G}_{2}F(B)(T)\Big)$. But, this follows since $F_{M_{T},B}$ is a morphism of complexes.\\
Now,  let $ t\in \mathrm{Hom}_{\mathcal{T}}(T,T')$ be a homogenous element and  $B\in \mathrm{DgMod}(S) $. We have to show that the following diagram commutes up to the sign
$(-1)^{0\cdot |t|}=1$:
\[
\begin{diagram}
\node{\mathbb{G}_{1}(B)(T)}\arrow{e,t}{F_{M_{T},B}}\arrow{s,l}{\mathbb{G}_{1}(B)(t)}
 \node{\mathbb{G}_{2}(FB)(T)}\arrow{s,r}{\mathbb{G}_{2}(FB)(t)}\\
\node{\mathbb{G}_{1}(B)(T')}\arrow{e,b}{F_{M_{T'},B}}
 \node{\mathbb{G}_{2}(FB)(T').}
 \end{diagram}
\]
Note that if $ \varphi\in \mathbb{G}_{1}(B)(T)=\mathrm{Hom}_{\mathrm{DgMod}(\mathcal{R})}(M_{T},B)$ is a homogenous element, then
\[
(\mathbb{G}_{2}(FB)(t)\circ F_{M_{T},B})(\varphi)=\mathrm{Hom}_{\mathrm{DgMod}(\mathcal{S})}(F(\overline{t}),FB)F(\varphi)=(-1)^{|\varphi||t|}F(\varphi)F(\overline{t})
\]
and
\begin{align*}
(F_{M_{T'},B}\circ \mathbb{G}_{1}(B)(t))(\varphi) & =F_{M_{T'},B}\left( \mathrm{Hom}_{\mathrm{DgMod}(\mathcal{R})}(\overline{t},B)(\varphi) \right)\\
& =F_{M_{T'},B}((-1)^{|\varphi||t|}\varphi\overline{t})\\
& =(-1)^{|\varphi||t|}F(\varphi\overline{t})\\
& =(-1)^{|\varphi||t|}F(\varphi)F(\overline{t}).
\end{align*}
Proving that the diagram commutes.\\
(b)  By item $(a)$ we have that for each $B\in \mathrm{DgMod}(\mathcal{R})$, we have that
$$F_{M,B}\in \mathrm{Hom}_{\mathrm{DgMod}(\mathcal{T})}^{0}\Big(\mathbb{G}_1(B),  (\mathbb{G}_2\circ F)(B)\Big).$$ This means that
$ \xi:=F_{M,-}:\mathbb{G}_{1}\longrightarrow \mathbb{G}_{2}\circ F $ is of degree zero.\\
Now, let $ f\in \mathrm{Hom}_{\mathrm{DgMod}(\mathcal{R})}(B,B')$ homogenous then we have to show the the following diagram commutes up to the sign $(-1)^{|f|\cdot 0}=1$:
\[
\begin{diagram}
\node{\mathbb{G}_{1}(B)}\arrow{e,t}{F_{M,B}}\arrow{s,l}{\mathbb{G}_{1}(f)}
 \node{\mathbb{G}_{2}F(B)}\arrow{s,r}{\mathbb{G}_{2}F(f)}\\
\node{\mathbb{G}_{1}(B')}\arrow{e,b}{F_{M,B'}}
 \node{\mathbb{G}_{2}F(B')}
 \end{diagram}
\]
For all $ T\in \mathcal{T} $ and for all $ \varphi\in \mathbb{G}_{1}(B)(T) $ we obtain the equalities.
\begin{align*}
([\mathbb{G}_{2}F(f)]_{T}\!\circ \!F_{M_T,B})(\varphi) & =\! [\mathbb{G}_{2}F(f)]_{T}\!\left( F_{M_{T},B}(\varphi)\right)\\
 &=\!\mathrm{Hom}_{\mathrm{DgMod}(\mathcal{S})}(\!F(M_{T}),F(f)\!)F(\varphi)\\
& =F(f)F(\varphi)
\end{align*}
and 
\begin{align*}
([F_{M_T,B'}] \circ[\mathbb{G}_{1}(f)]_{T})(\varphi) =F_{M_{T},B'}\left( \mathrm{Hom}_{\mathrm{DgMod}(\mathcal{R})}(M_{T},f)(\varphi)\right) & =F(f\varphi)\\
& =F(f)F(\varphi).
\end{align*}
Proving that the previous diagram is commutative.\\
This proves that  $ \xi:=F_{M,-}:\mathbb{G}_{1}\longrightarrow \mathbb{G}_{2}\circ F $ is a dg-natural transformation of degree zero. \\
Now, let us see that $\xi$ is closed. Let us consider
$$\delta:\mathrm{DgNat}(\mathbb{G}_{1},\mathbb{G}_{2}\circ F )\longrightarrow \mathrm{DgNat}(\mathbb{G}_{1},\mathbb{G}_{2}\circ F )$$
the differential defined as in Equation \ref{diferentialNat}:

$$\delta(\eta)_{B}:=d(\eta_{B})$$
for all  $\eta\in \mathrm{DgNat}^{n}(\mathbb{G}_{1},\mathbb{G}_{2}\circ F )$ and  for all $B\in \mathrm{DgMod}(\mathcal{R})$, where 
$$d:\mathrm{Hom}_{\mathrm{DgMod}(\mathcal{T})}\Big(\mathbb{G}_{1}(B), \mathbb{G}_{2}F(B)\Big)\longrightarrow \mathrm{Hom}_{\mathrm{DgMod}(\mathcal{T})}\Big(\mathbb{G}_{1}(B), \mathbb{G}_{2}F(B)\Big)$$
is the corresponding differential.\\
We must show that $\delta(\xi)=0$. That is, we have to show that  $d(F_{M,B})=0$ for all  $B\in \mathrm{DgMod}(\mathcal{R})$. By Equation  \ref{dgNatmod}, we have that 
$$d(F_{M,B})_{T}:=\Delta(F_{M_{T},B})$$
 for all $T\in \mathcal{T}$, where the morphism 
 $$\xymatrix{\mathrm{Hom}_{\mathrm{DgMod}(K)}\Big(\mathbb{G}_{1}(B)(T),\mathbb{G}_{2}F(B)(T)\Big)\ar[d]^{\Delta}\\
\mathrm{Hom}_{\mathrm{DgMod}(K)}\Big(\mathbb{G}_{1}(B)(T),\mathbb{G}_{2}F(B)(T)\Big)}$$
is the differential  defined in the Equation  \ref{Homdgestructure}.\\
Since $F_{M_{T},B}:\mathrm{Hom}_{\mathrm{DgMod}(\mathcal{R})}(M_{T},B)\longrightarrow \mathrm{Hom}_{\mathrm{DgMod}(\mathcal{S})}(FM_{T},FB)$, by Equation  \ref{Homdgestructure}, we have that
\begin{align*}
& \Delta(F_{M_{T},B})=\\
& = d_{\mathrm{Hom}_{\mathrm{DgMod}(\mathcal{S})}(FM_{T},FB)}\circ F_{M_{T},B} -(-1)^{|F_{M_{T},B}|}F_{M_{T},B}\circ d_{\mathrm{Hom}_{\mathrm{DgMod}(\mathcal{R})}(M_{T},B)}\\
 & = d_{\mathrm{Hom}_{\mathrm{DgMod}(\mathcal{S})}(FM_{T},FB)}\circ F_{M_{T},B}-F_{M_{T},B}\circ d_{\mathrm{Hom}_{\mathrm{DgMod}(\mathcal{R})}(M_{T},B)}.
\end{align*}
Now, since  $F_{M_{T},B}$ is a morphism of complexes, we have that it commutes with differentials and hence we conclude that $\Delta(F_{M_{T},B})=0$ for all $T\in \mathcal{T}$, and we conclude that $d(F_{M,B})=0$. This proves that $\delta(\xi)=0$, and hence
$\xi:=F_{M,-}:\mathbb{G}_{1}\longrightarrow \mathbb{G}_{2}\circ F $ is a closed and degree zero dg-natural transformation.\\
$(c)$ and $(d)$. Suppose that $M_{T}=G(N_{T})$ for all $T\in \mathcal{T}$. Then, for all  $L\in \mathrm{DgMod}(\mathcal{S})$ and $T\in\mathcal T$ 
we have $\mathbb{G}_2(L)(T)=\mathrm{Hom}_{\mathrm{DgMod}(\mathcal{S})}(N_{T},L)$ and
 $(\mathbb{G}_1\circ G) (L)(T)=\mathrm{Hom}_{\mathrm{DgMod}(\mathcal{R})}(G(N_{T}),G(L))$. Therefore the proof of $(c)$ and $(d)$  is similar to $(a)$ and $(b)$.
\end{proof}

We have the following definition which is a generalization of the $\textnormal{\cite[Definition 3.2]{Chen}}$.
\begin{definition}
Let $G_{1}: \mathcal{A}\rightarrow  \mathcal{D}$, $G_{2}: \mathcal {B}\rightarrow  \mathcal{D}$,  $F: \mathcal{A}\rightarrow  \mathcal{B}$ and 
$H: \mathcal{B}\rightarrow  \mathcal{A}$ dg-functors. Assume that $(F,H)$ is a dg-adjoint pair, with $\eta_{X,Y}:\mathrm{Hom}_{\mathcal{B}}(FX,Y)\longrightarrow \mathrm{Hom}_{\mathcal{A}}(X,HY)$ being the adjugant equivalence $\forall X\in \mathcal{A}$  and $\forall Y\in \mathcal{B}$. We say that the pair $(G_{1},G_{2})$ is $\textbf{dg-compatible}$ with the dg-adjoint pair $(F,H)$ if there exist two closed and degree zero dg-natural transformations
$$ \xi:G_{1} \rightarrow G_{2}, \quad  \rho:G_{2}\rightarrow G_{1}H$$ 
such that $\rho_{Y}$ is a monomorphism and
\begin{equation}\label{condicioncompa}
G_{1}(\eta_{X,Y}(f))=\rho_{Y}\circ G_2(f)\circ \xi_X
\end{equation}
 for every $X\in  \mathcal{A}$, $Y\in  \mathcal{B}$ and $f\in\mathrm{Hom}_{\mathcal{B}}(FX,Y)$.
\end{definition}

We recall the following Definition given in \cite{PaezSandovalSan}.

\begin{definition}\cite[Definition 3.1]{PaezSandovalSan}\label{commadgcat}
Let $\mathcal{C}$ and $\mathcal{D}$ two dg-categories and $F:\mathcal{C}\longrightarrow \mathcal{D}$ be a dg-functor. We define the $\textbf{comma category}$ $\big(\mathcal{D},F(\mathcal{C})\big)$. 
\begin{enumerate}
\item [(a)] The objects of $\big(\mathcal{D},F(\mathcal{C})\big)$ are morphisms $f:D\longrightarrow F(C)$ in $\mathcal{D}$ with $f\in \mathrm{Hom}_{\mathcal{D}}^{0}(D,F(C))$ and $d_{\mathcal{D}(D,F(C))}(f)=0$. The object
$f:D\longrightarrow F(C)$ will be denoted by $(D,f,C)$.

\item [(b)] Given two objects $f:D\longrightarrow F(C)$ and $f:D'\longrightarrow F(C')$ in $\big(\mathcal{D},F(\mathcal{C})\big)$ a morphism from $f$ to $f'$ is a pair of morphisms $\alpha:D\longrightarrow D'$ and $\beta:C\longrightarrow C'$ such that the following diagram commutes in $\mathcal{D}$:
$$\xymatrix{D\ar[r]^{\alpha}\ar[d]_{f} & D'\ar[d]^{f'}\\
F(C)\ar[r]^{F(\beta)} & F(C').}$$
\end{enumerate}
The composition is given as follows: let $(\alpha,\beta):(D,f,C)\longrightarrow (D',f',C')$ and $(\alpha',\beta'):(D',f',C')\longrightarrow (D'',f'',C'')$ morphisms in  $\big(\mathcal{D},F(\mathcal{C})\big)$ we set:
$$(\alpha',\beta')\circ (\alpha,\beta):=(\alpha'\circ \alpha,\beta'\circ\beta).$$
\end{definition}

\begin{proposition}\cite[Proposition 3.2]{PaezSandovalSan}\label{dgcommaprop}
Let $\mathcal{C}$ and $\mathcal{D}$ two dg-categories and $F:\mathcal{C}\longrightarrow \mathcal{D}$ be a dg-functor. Then the comma category $\big(\mathcal{D},F(\mathcal{C})\big)$ is a dg-category.
\end{proposition}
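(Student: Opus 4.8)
The plan is to upgrade each morphism set of $\big(\mathcal{D},F(\mathcal{C})\big)$ to a dg $K$-module and then check that composition is a degree-zero chain map, i.e.\ to verify conditions (a) and (b) in the definition of a dg-category. First I would extend the morphisms of Definition~\ref{commadgcat} to all degrees: for objects $(D,f,C)$ and $(D',f',C')$ and each $n\in\mathbb{Z}$ put
$$\mathrm{Hom}^{n}_{(\mathcal{D},F(\mathcal{C}))}\big((D,f,C),(D',f',C')\big):=\{(\alpha,\beta)\mid \alpha\in\mathrm{Hom}^{n}_{\mathcal{D}}(D,D'),\ \beta\in\mathrm{Hom}^{n}_{\mathcal{C}}(C,C'),\ f'\circ\alpha=F(\beta)\circ f\},$$
and set $\mathrm{Hom}_{(\mathcal{D},F(\mathcal{C}))}\big((D,f,C),(D',f',C')\big):=\bigoplus_{n\in\mathbb{Z}}\mathrm{Hom}^{n}_{(\mathcal{D},F(\mathcal{C}))}\big((D,f,C),(D',f',C')\big)$. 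The defining square makes sense in each degree because $f,f'$ have degree zero, so no sign intervenes; this is a graded $K$-submodule of $\mathrm{Hom}_{\mathcal{D}}(D,D')\oplus\mathrm{Hom}_{\mathcal{C}}(C,C')$, and in degree zero, together with the closedness requirement on the objects, it reproduces the morphisms described in Definition~\ref{commadgcat}.

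Next I would define the differential componentwise, $d(\alpha,\beta):=\big(d_{\mathcal{D}(D,D')}(\alpha),d_{\mathcal{C}(C,C')}(\beta)\big)$, and check that it preserves the subspace of compatible pairs. Applying $d$ to the relation $f'\circ\alpha=F(\beta)\circ f$ and using the Leibniz rule \eqref{Homdgestructure}: since $f'$ is closed of degree zero, $d(f'\circ\alpha)=f'\circ d_{\mathcal{D}}(\alpha)$; since $F$ is a dg-functor, $F(d_{\mathcal{C}}(\beta))=d_{\mathcal{D}}(F(\beta))$, and since $f$ is closed, $d(F(\beta)\circ f)=F(d_{\mathcal{C}}(\beta))\circ f$. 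Hence $f'\circ d_{\mathcal{D}}(\alpha)=F(d_{\mathcal{C}}(\beta))\circ f$, so $(d_{\mathcal{D}}(\alpha),d_{\mathcal{C}}(\beta))$ is again a morphism of the comma category, and $d\circ d=0$ follows from $d_{\mathcal{D}}\circ d_{\mathcal{D}}=0$ and $d_{\mathcal{C}}\circ d_{\mathcal{C}}=0$. This yields condition (a).

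Finally I would check condition (b) for the composition $(\alpha',\beta')\circ(\alpha,\beta):=(\alpha'\circ\alpha,\beta'\circ\beta)$. Compatibility of the composite follows from the two given squares and functoriality $F(\beta')F(\beta)=F(\beta'\beta)$; the composite of morphisms of degrees $n$ and $m$ has degree $n+m$, so composition is degree zero; $K$-bilinearity, associativity, and the identities $1_{(D,f,C)}=(1_{D},1_{C})$ are inherited from $\mathcal{D}$ and $\mathcal{C}$. That composition commutes with the differentials reduces, after unwinding, to the Leibniz rules already available in $\mathcal{D}$ and $\mathcal{C}$ together with the equality of degrees $|(\alpha',\beta')|=|\alpha'|=|\beta'|$. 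I do not anticipate a genuine obstacle: the only delicate point is the well-definedness of $d$ on the subspace cut out by the commuting square, which rests precisely on $F$ commuting with differentials and on the objects $f,f'$ being closed of degree zero; everything else is routine sign bookkeeping transported from $\mathcal{D}$ and $\mathcal{C}$.
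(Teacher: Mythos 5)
Your proof is correct, and it is exactly the expected argument: grade the hom-sets by pairs of equal degree satisfying the strictly commuting square, observe that the square splits degreewise because $f,f'$ are of degree zero, check the componentwise differential preserves the subspace (using $d(f)=d(f')=0$ and $F\circ d=d\circ F$), and inherit composition, Leibniz, and identities from $\mathcal{D}$ and $\mathcal{C}$. The paper itself gives no proof here — it defers to \cite[Proposition 3.2]{PaezSandovalSan} — so there is nothing to contrast; your argument is the standard one and fills that gap correctly.
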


Now, we have the following result that is analogous to \cite[Theorem 3.6]{Chen}.
\begin{theorem}\label{DgRecollJ}
Let $\mathcal{A}$, $\mathcal{B}$, $\mathcal{C}$ and $\mathcal{D}$ dg-categories, and let  $G_{1}:\mathcal{A}\longrightarrow \mathcal{D}$ and 
$G_{2}:\mathcal{B}\longrightarrow \mathcal{D}$ dg-functors.
\begin{enumerate}
\item [(a)] If the diagram 
$$\xymatrix{\mathcal{C}\ar@<-2ex>[rr]_{i_{\ast}} & & \mathcal{A}\ar@<-2ex>[rr]_{j^{!}}\ar@<-2ex>[ll]_{i^{\ast}} & &  \mathcal{B}\ar@<-2ex>[ll]_{j_{!}}}$$
is a left dg-recollement where, $(G_{2},G_{1})$ is dg-compatible with the dg-adjoint pair $(j_{!},j^{!})$,  then there is a left dg-recollement
$$\xymatrix{\mathcal{C}\ar@<-2ex>[rr]_{\tilde{i_{\ast}}} & & \Big(\mathcal{D},G_{1}\mathcal{A}\Big)\ar@<-2ex>[rr]_{\tilde{j^{!}}}\ar@<-2ex>[ll]_{\tilde{i^{\ast}}} & &  \Big(\mathcal{D},G_{2}\mathcal{B}\Big)\ar@<-2ex>[ll]_{\tilde{j_{!}}}}$$

\item [(b)] If the diagram 
$$\xymatrix{\mathcal{C}\ar@<2ex>[rr]^{i_{!}} & & \mathcal{A}\ar@<2ex>[rr]^{j^{\ast}}\ar@<2ex>[ll]^{i^{!}} & &  \mathcal{B}\ar@<2ex>[ll]^{j_{\ast}}}$$
is a right dg-recollement, where $(G_{1},G_{2})$ is dg-compatible with the dg-adjoint pair $(j^{\ast},j_{\ast})$, then there is a right dg-recollement
$$\xymatrix{\mathcal{C}\ar@<2ex>[rr]^{\tilde{i_{!}}} & & \Big(\mathcal{D},G_{1}\mathcal{A}\Big)\ar@<2ex>[rr]^{\tilde{j^{\ast}}}\ar@<2ex>[ll]^{\tilde{i^{!}}} & & \Big(\mathcal{D},G_{2}\mathcal{B}\Big)\ar@<2ex>[ll]^{\tilde{j_{\ast}}}}$$
\end{enumerate}
\end{theorem}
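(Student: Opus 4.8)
The plan is to mimic the proof of \cite[Theorem 3.6]{Chen}, transporting it to the dg-setting, and to treat only part (a) in detail since part (b) follows by a dual argument (working with $\mathcal{D}^{op}$, $G_1^{op}$, $G_2^{op}$, or simply repeating the construction in reverse). The key input is the equivalence
$$\big(\mathcal{D},G_1\mathcal{A}\big)\cong \mathrm{DgMod}\big(\left[\begin{smallmatrix}\mathcal{T} & 0 \\ M & \mathcal{R}\end{smallmatrix}\right]\big)$$
in the setup where $\mathcal{A}=\mathrm{DgMod}(\mathcal{R})$, $\mathcal{B}=\mathrm{DgMod}(\mathcal{S})$, $\mathcal{D}=\mathrm{DgMod}(\mathcal{T})$, $G_1=\mathbb{G}_1$ and $G_2=\mathbb{G}_2$, together with Lemma \ref{RecollH} which provides the closed degree zero dg-natural transformations $\xi:\mathbb{G}_1\to\mathbb{G}_2\circ F$ and $\rho:\mathbb{G}_2\to\mathbb{G}_1\circ G$ witnessing dg-compatibility. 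So the first step is to spell out how an object $(D,f,C)$ of $\big(\mathcal{D},G_1\mathcal{A}\big)$ — i.e. a closed degree zero morphism $f:D\to G_1(C)$ in $\mathcal{D}$ — is acted on by each of the six desired functors, and likewise on morphisms.

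\textbf{Construction of the functors.} I would define $\tilde{i}^{\ast}$, $\tilde{i}_{\ast}$, $\tilde{j}_{!}$, $\tilde{j}^{!}$ componentwise using the underlying recollement data. Concretely: given $(D,f,C)\in \big(\mathcal{D},G_1\mathcal{A}\big)$ one sets $\tilde{j}^{!}(D,f,C):=(D,\,G_2(j^{!}\text{-counit or unit transport of }f)\,,\,j^{!}C)$, using $\rho$ and the compatibility equation \eqref{condicioncompa} to see that the resulting arrow $D\to G_2(j^{!}C)$ is again closed of degree zero; and $\tilde{j}_{!}(D',g,B):=\big(D',\,\rho_{B}\circ G_2(g)\,,\,j_{!}B\big)$ where the compatibility identity $G_1(\eta_{X,Y}(f))=\rho_Y\circ G_2(f)\circ\xi_X$ is exactly what makes the unit/counit of $(j_!,j^!)$ lift to morphisms in the comma categories. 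For $\tilde{i}^{\ast}$ and $\tilde{i}_{\ast}$ I would use that $i_{\ast}:\mathcal{C}\to\mathcal{A}$ lands in $\mathrm{Ker}(j^{!})$, so that $G_1(i_{\ast}X)$ receives the zero-type structure map and $(G_1(i_{\ast}X),0,i_{\ast}X)$-style objects make sense; $\tilde{i}^{\ast}$ is then read off from the cokernel/kernel constructions inside the abelian (or additive dg) structure of the comma category, exactly as in Chen's proof. Each assignment must be checked to be a dg-functor, i.e. $K$-linear on hom-complexes and commuting with differentials; this is where one invokes that $\xi$, $\rho$ are \emph{closed} (so no sign corrections appear) and that $F,H,i^{\ast},i_{\ast},j_!,j^!$ are already dg-functors.

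\textbf{Verifying the axioms.} After the functors are in place, I would verify (LR1) the two dg-adjunctions: the adjunction $(\tilde{i}^{\ast},\tilde{i}_{\ast})$ reduces, via the comma-category hom-formula, to the adjunction $(i^{\ast},i_{\ast})$ on the $\mathcal{A}$-component plus a routine matching on the $\mathcal{D}$-component using that $G_1$ is a dg-functor; the adjunction $(\tilde{j}_{!},\tilde{j}^{!})$ is where the dg-compatibility really earns its keep — the adjugant isomorphism $\mathrm{Hom}_{(\mathcal{D},G_2\mathcal{B})}(\tilde{j}_!(-),(-))\cong \mathrm{Hom}_{(\mathcal{D},G_1\mathcal{A})}((-),\tilde{j}^{!}(-))$ is built from $\eta_{X,Y}$ and \eqref{condicioncompa}, and one must check it is natural and a morphism in $\mathrm{DgMod}_{str}(K)$. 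Then (LR2) $\tilde{j}^{!}\tilde{i}_{\ast}=0$ follows from $j^{!}i_{\ast}=0$ together with $\rho$ applied to the zero structure map. Finally (LR3): $\tilde{i}_{\ast}$ and $\tilde{j}_{!}$ are dg-fully faithful; for $\tilde{i}_{\ast}$ use Proposition \ref{dgfulliso} or \ref{adjointfull} (fully faithfulness of a left adjoint $\Leftrightarrow$ the counit is iso) reducing to fully-faithfulness of $i_{\ast}$; for $\tilde{j}_{!}$ the same criterion applies, and here the hypothesis that $\rho_{Y}$ is a \emph{monomorphism} in the compatibility definition is precisely what guarantees the relevant unit is an isomorphism on hom-complexes. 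I expect the main obstacle to be the bookkeeping in (LR1) for the pair $(\tilde{j}_{!},\tilde{j}^{!})$: one must track the comma-category structure maps through the adjunction and confirm the adjugant respects degrees and differentials, with the compatibility square \eqref{condicioncompa} inserted in exactly the right spot; once that diagram chase is done, the remaining axioms are formal consequences of the underlying left dg-recollement.
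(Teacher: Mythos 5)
Your strategy coincides with the paper's, which itself only records that the proof of \cite[Theorem 3.6]{Chen} can be adapted to the dg setting; your outline of the six functors, the use of the compatibility equation \eqref{condicioncompa} to build the adjugant for $(\tilde{j}_{!},\tilde{j}^{!})$, and the role you assign to the monomorphism hypothesis on $\rho$ are exactly the content of that adaptation. Two local corrections are needed, however. First, when $(G_{2},G_{1})$ is dg-compatible with $(j_{!},j^{!})$ the two transformations are $\xi\colon G_{2}\to G_{1}j_{!}$ and $\rho\colon G_{1}\to G_{2}j^{!}$, so for an object $(D',g,B)$ of $\big(\mathcal{D},G_{2}\mathcal{B}\big)$, i.e.\ $g\colon D'\to G_{2}(B)$, the structure map of $\tilde{j}_{!}(D',g,B)$ must be $\xi_{B}\circ g\colon D'\to G_{1}(j_{!}B)$; your expression $\rho_{B}\circ G_{2}(g)$ does not typecheck, since $g$ is a morphism of $\mathcal{D}$ (so $G_{2}(g)$ is meaningless) and $\rho$ is indexed by objects of $\mathcal{A}$. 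Dually one sets $\tilde{j}^{!}(D,f,A)=(D,\rho_{A}\circ f,j^{!}A)$, and the monomorphism property of $\rho_{A}$ is what makes the adjugant $(\alpha,\beta)\mapsto(\alpha,\eta(\beta))$ surjective. Second, $\tilde{i}_{\ast}$ must send $X\in\mathcal{C}$ to $(0,0,i_{\ast}X)$: an object of the form $(G_{1}(i_{\ast}X),0,i_{\ast}X)$, as your sketch suggests, would give $\tilde{j}^{!}\tilde{i}_{\ast}X=(G_{1}(i_{\ast}X),0,0)\neq 0$ and so would break (LR2), whereas with $(0,0,i_{\ast}X)$ one gets $\tilde{j}^{!}\tilde{i}_{\ast}=0$ directly from $j^{!}i_{\ast}=0$; the left adjoint $\tilde{i}^{\ast}$ is then the cokernel-type construction you allude to, which (as in Chen's argument) tacitly assumes the ambient categories admit the relevant cokernels --- harmless in the intended application to dg-module categories, but worth noting since the theorem is stated for arbitrary dg-categories. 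With these formulas fixed, the verification you describe goes through.
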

\begin{proof}
The proof given in \cite[Theorem 3.6]{Chen} can be adapted to this setting.
\end{proof}

In order to prove  the main Theorem of this section we need the following result which generalizes \cite[Lemma 4.2]{Chen} and \cite[Lemma 4.9]{LGOS2}.

\begin{lemma}\label{RecollG}
Let $ \mathcal{R},\, \mathcal{S}  $ and $ \mathcal{T} $ be dg-categories, $ F:\mathrm{DgMod}(\mathcal{R})\longrightarrow \mathrm{DgMod}(\mathcal{S}) $ and $G:\mathrm{DgMod}(\mathcal{S}) \longrightarrow \mathrm{DgMod}(\mathcal{R})$ be dg-functors. For $ M\in \mathrm{DgMod}(\mathcal{R}\otimes \mathcal{T}^{op})$ consider the dg-functors $\mathbb{G}_{1}:\mathrm{DgMod}(\mathcal{R})\longrightarrow \mathrm{DgMod}(\mathcal{T}) $ and $ \mathbb{G}_{2}:\mathrm{DgMod}(\mathcal{S})\longrightarrow \mathrm{DgMod}(\mathcal{T}) $ as we have defined in Remarks \ref{defofG1} and \ref{defofG2} where $N_{T}=F(M_{T} )$. If $ (F,G) $ is a dg-adjoint pair and its unit $ \varepsilon:1_{\mathrm{DgMod}(\mathcal{R})}\longrightarrow GF $ satisfies $ \varepsilon_{M_{T}}=1_{M_{T}}$  for all $T\in \mathcal{T}$, then the pair $ (\mathbb{G}_{1},\mathbb{G}_{2})$ is dg-compatible with $(F,G)$.
\end{lemma}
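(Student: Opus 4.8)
The plan is to verify the two defining conditions of dg-compatibility directly, using the explicit descriptions of $\mathbb{G}_1$, $\mathbb{G}_2$ and the natural transformations $\xi = F_{M,-}$ and $\rho = G_{N,-}$ provided in Lemma \ref{RecollH}. First I would set up the data: since $(F,G)$ is a dg-adjoint pair, by Lemma \ref{RecollH}(b) we have the closed degree zero dg-natural transformation $\xi = F_{M,-} : \mathbb{G}_1 \longrightarrow \mathbb{G}_2 \circ F$, and since the hypothesis $\varepsilon_{M_T} = 1_{M_T}$ forces $G(N_T) = G(F(M_T)) = M_T$ for all $T \in \mathcal{T}$, Lemma \ref{RecollH}(d) gives the closed degree zero dg-natural transformation $\rho = G_{N,-} : \mathbb{G}_2 \longrightarrow \mathbb{G}_1 \circ G$. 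Thus both candidate transformations exist and have the required closedness and degree properties.

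Next I would check that each component $\rho_L = G_{N,L}$ is a monomorphism. For $L \in \mathrm{DgMod}(\mathcal{S})$ and $T \in \mathcal{T}$, the map $G_{N_T,L} : \mathrm{Hom}_{\mathrm{DgMod}(\mathcal{S})}(N_T,L) \to \mathrm{Hom}_{\mathrm{DgMod}(\mathcal{R})}(G N_T, G L)$ sends $g \mapsto G(g)$; composing with the adjunction isomorphism and using $G(N_T) = M_T$, one can identify $G_{N_T,L}$ (up to the natural iso coming from the adjoint pair $(F,G)$ and the assumption $\varepsilon_{M_T}=1_{M_T}$) with the unit map $g \mapsto \varepsilon_L \circ G(g)$; since $(F,G)$ is an adjunction with $F$ applied, the relevant map $\mathrm{Hom}(N_T,L) \cong \mathrm{Hom}(F M_T, L) \to \mathrm{Hom}(M_T, G L)$ is a bijection, so $G_{N_T,L}$ is injective, hence $\rho_L$ is a monomorphism componentwise and therefore a monomorphism in $\mathrm{DgMod}(\mathcal{T})$.

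The core step is then to verify the compatibility equation \eqref{condicioncompa}, namely that for $X \in \mathrm{DgMod}(\mathcal{R})$, $Y \in \mathrm{DgMod}(\mathcal{S})$ and $f \in \mathrm{Hom}_{\mathrm{DgMod}(\mathcal{S})}(FX,Y)$ we have $\mathbb{G}_1(\eta_{X,Y}(f)) = \rho_Y \circ \mathbb{G}_2(f) \circ \xi_X$, where $\eta_{X,Y}(f) = G(f) \circ \varepsilon_X$ is the adjunct. I would check this componentwise at each $T \in \mathcal{T}$: evaluating the right-hand side on an element $\varphi \in \mathbb{G}_1(X)(T) = \mathrm{Hom}_{\mathrm{DgMod}(\mathcal{R})}(M_T, X)$, the transformation $\xi_X$ sends $\varphi$ to $F(\varphi) \in \mathrm{Hom}(N_T, FX)$, then $\mathbb{G}_2(f)$ postcomposes with $f$ to give $f \circ F(\varphi)$, then $\rho_Y$ applies $G$ to give $G(f \circ F(\varphi)) = G(f) \circ G(F(\varphi))$; on the other side, $\mathbb{G}_1(\eta_{X,Y}(f))$ sends $\varphi$ to $\eta_{X,Y}(f) \circ \varphi = G(f) \circ \varepsilon_X \circ \varphi$. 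These agree precisely because $G(F(\varphi)) \circ \text{(unit at $M_T$)} = \varepsilon_X \circ \varphi$ by naturality of $\varepsilon$ applied to $\varphi : M_T \to X$, combined with the hypothesis $\varepsilon_{M_T} = 1_{M_T}$: the naturality square for $\varepsilon$ reads $\varepsilon_X \circ \varphi = GF(\varphi) \circ \varepsilon_{M_T} = GF(\varphi)$. I expect the main obstacle to be keeping the signs and the precise form of the adjugant $\eta_{X,Y}$ under control when the elements involved are homogeneous of nonzero degree, together with carefully justifying that the identification $G(N_T)=M_T$ is compatible (as an equality, not merely an isomorphism) with the bimodule structures so that the formulas for $\mathbb{G}_1 \circ G$ genuinely match; once the naturality-of-$\varepsilon$ identity is in hand, the verification is a direct computation, so I would conclude by assembling these checks into the statement that $(\mathbb{G}_1,\mathbb{G}_2)$ is dg-compatible with $(F,G)$.
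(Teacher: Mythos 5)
Your proposal is correct and follows essentially the same route as the paper: obtain $\xi$ and $\rho$ from Lemma \ref{RecollH}, show each $\rho_{L}$ is a monomorphism by identifying $G_{N_{T},L}$ with the adjunction isomorphism $\eta_{M_{T},L}$ via the hypothesis $\varepsilon_{M_{T}}=1_{M_{T}}$, and verify the compatibility identity componentwise (the paper phrases this last step through naturality of $\eta_{B,L}$ in the first variable rather than naturality of the unit $\varepsilon$, but the two computations are equivalent). The only blemish is the ill-typed expression $\varepsilon_{L}\circ G(g)$, which should read $G(g)\circ \varepsilon_{M_{T}}$; the surrounding argument makes the intended identification clear.
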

\begin{proof}
Since $ (F,G) $ is dg-adjoint pair there exist a natural equivalence 
\[
\eta:=\!\!\lbrace \eta_{B,L}\!:\!\mathrm{Hom}_{\mathrm{DgMod}(\mathcal{S})}(FB,L)\rightarrow \mathrm{Hom}_{\mathrm{DgMod}(\mathcal{R})}(B,GL)\rbrace
\]
for $B\in \mathrm{DgMod}(\mathcal{R})$ and $L\in \mathrm{DgMod}(\mathcal{S})$,
where  each $\eta_{B,L}$ is an isomorphism of complexes that is natural in $B$ and $L$. We note that $\eta_{B,L}$ is an isomorphism in $\mathrm{DgMod}(K)$. \\
First recall that in this case $\varepsilon$  is a closed and degree zero dg-natural transformation.\\
By Lemma \ref{RecollH} (b), we have a closed and degree zero dg-natural transformation $ \xi:=F_{M,-}:\mathbb{G}_{1}\longrightarrow \mathbb{G}_{2}F $. Since $ \varepsilon_{M_{T}}=1_{M_{T}}$, we have that $G(N_{T})=M_{T}$ for all $T\in \mathcal{T}$ and by Lemma \ref{RecollH} (d), we  have a closed and degree zero dg-natural transformation $\rho:=G_{N,-}=\mathbb{G}_{2}\longrightarrow \mathbb{G}_{1}G$.\\
First we will see that for all $L\in \mathrm{DgMod}(\mathcal S)$ the morphism 
$$\rho_{L}:\mathbb{G}_{2}(L)\longrightarrow \mathbb{G}_{1}G(L)$$
is a monomorphism in $\mathrm{DgMod}(\mathcal{T})$. Indeed, for $T\in \mathcal{T}$ we have to show that 
$$[\rho_{L}]_{T}:=G_{N_{T},L}:\mathrm{Hom}_{\mathrm{DgMod}(\mathcal{S})}(N_{T},L)\rightarrow \mathrm{Hom}_{\mathrm{DgMod}(\mathcal {R})}(M_{T},G(L))$$
is a monomorphism. Consider the isomorphism of complexes (given by $\eta$)
$$\eta_{M_{T},L}:\mathrm{Hom}_{\mathrm{DgMod}(\mathcal{S})}(F(M_{T}),L)\longrightarrow \mathrm{Hom}_{\mathrm{DgMod}(\mathcal{R})}(M_{T},G(L)).$$
We assert that $\eta_{M_{T},L}=G_{N_{T},L}.$ Indeed, let 
$\beta \in  \mathrm{Hom}_{\mathrm{DgMod}(\mathcal{S})}(F(M_{T}),L)$ a homogenous element. Since $\eta_{M_{T},L}$ is natural in $L$, we have the following commutative diagram in $\mathrm{DgMod}(K)$:
$$\xymatrix{\mathrm{Hom}_{\mathrm{DgMod}(\mathcal{S})}(
F(M_{T}),F(M_{T}))\ar[rr]^{\eta_{M_{T},F(M_{T})}}\ar[d]_{\mathrm{Hom}_{\mathrm{DgMod}(\mathcal{S})}(F(M_{T}),\beta)} & & \mathrm{Hom}_{\mathrm{DgMod}(\mathcal{R})}(M_{T},GF(M_{T}))\ar[d]_{\mathrm{Hom}_{\mathrm{DgMod}(\mathcal{R})}(\!M_{T},G(\beta)\!)}\\
\mathrm{Hom}_{\mathrm{DgMod}(\mathcal{S})}(F(M_{T}),L)\ar[rr]_{\eta_{M_{T},L}} & & \mathrm{Hom}_{\mathrm{DgMod}(\mathcal{R})}(M_{T},G(L))}$$
Then, for $1_{F(M_{T})}:F(M_{T})\longrightarrow F(M_{T})$ we have that:

\begin{align*}
G(\beta)\!\circ\! \big(\! \eta_{M_{T},F(M_{T})}(1_{F(M_{T})})\!\big)& \!=\! \big(\!\mathrm{Hom}_{\mathrm{DgMod}(\mathcal{R})}(M_{T},G(\beta))\!\circ\!\eta_{M_{T},F(M_{T})}\!\big)(1_{F(M_{T})})\\
& \!=\! \big(\eta_{M_{T},L}\circ \mathrm{Hom}_{\mathrm{DgMod}(\mathcal{S})}(N_{T},\beta)\big)(1_{F(M_{T})})\\
& = \eta_{M_{T},L}\Big(\mathrm{Hom}_{\mathrm{DgMod}(\mathcal{S})}(N_{T},\beta)\big)(1_{F(M_{T})})\Big)\\
&=\eta_{M_{T},L}(\beta\circ 1_{F(M_{T})})\\
&=  \eta_{M_{T},L}(\beta)
\end{align*}
Since $1_{M_{T}}=\varepsilon_{M_{T}}= \eta_{M_{T},F(M_{T})}(1_{F(M_{T})})$ then $ G(\beta)= \eta_{M_{T},L}(\beta)$.
Since $G_{N_{T},L}(\beta)=G(\beta)$ and $\eta_{M_{T},L}$ is an isomorphism (in particular it is a monomorphism) it follows that $G_{N_{T},L}$ is a monomorphism in $\mathrm{DgMod}(K)$, for all $ T\in \mathcal{T}$.
Hence, $\rho_{L}$ is a monomorphism for each $L\in \mathrm{DgMod}(\mathcal{S})$.\\
Now we have to show that $\mathbb{G}_1(\eta_{B,L}(f))=\rho_{L}\mathbb{G}_2(f)\xi_B,\,\,\forall f\in \mathrm{Hom}_{\mathrm{DgMod}(\mathcal {S})}(F(B),L)$. It is enough to see the equality in homogenous elements $f\in \mathrm{Hom}_{\mathrm{DgMod}(\mathcal {S})}(F(B),L)$.\\
We have to show that
$$
[\mathbb{G}_{1}\left( \eta_{B,L}(f)\right)]_{T}=G_{N_{T},L} [\mathbb{G}_2(f)]_{T}F_{M_{T},B}\,\,\,\,\,\,\forall T\in\mathcal T.$$ 

Let  $ \alpha\in \mathrm{Hom}_{\mathrm{DgMod}(\mathcal{R})}(M_{T},B)$ be a homogenous element.  Since $\eta_{B,L}$ is natural in $B$ we have the  following commutative diagram
\[
\begin{diagram}
\node{\mathrm{Hom}_{\mathrm{DgMod}(\mathcal{S})}(F(B),L)}\arrow{e,t}{\eta_{B,L}}\arrow{s,l}{\mathrm{Hom}_{\mathrm{DgMod}(\mathcal{S})}(F(\alpha),L)}
 \node{\mathrm{Hom}_{\mathrm{DgMod}(\mathcal{R})}(B,G(L))}\arrow{s,r}{\mathrm{Hom}_{\mathrm{DgMod}(\mathcal{R})}(\alpha,G(L))}\\
\node{\mathrm{Hom}_{\mathrm{DgMod}(\mathcal{S})}(F(M_{T}),L)}\arrow{e,b}{\eta_{M_{T},L}}
 \node{\mathrm{Hom}_{\mathrm{DgMod}(\mathcal{R})}(M_{T},G(L)).}
\end{diagram}
\]
Then, since $|F(\alpha)|=|\alpha|$ and $|\eta_{B,L}(f)|=|f|$ we have that 
\begin{align*}
 (-1)^{|f||\alpha|} G\Big(f\circ F(\alpha)\Big) & = G\Big((-1)^{|f||\alpha|}f\circ F(\alpha)\Big)\\
& =\eta_{M_{T},L}\Big((-1)^{|f||\alpha|}f\circ F(\alpha)\Big)\\
& =\eta_{M_{T},L}\Big(\mathrm{Hom}_{\mathrm{DgMod}(\mathcal{S})}(F(\alpha),L)(f)\Big)\\
& = \mathrm{Hom}_{\mathrm{DgMod}(\mathcal{R})}(\alpha,G(L))\Big(\eta_{B,L}(f)\Big)\\
& = (-1)^{|f||\alpha|}\eta_{B,L}(f)\circ \alpha.
\end{align*}
Therefore, $G\Big(f\circ F(\alpha)\Big)=\eta_{B,L}(f)\circ \alpha.$ We note that
\[
\Big[\mathbb{G}_{1}\left( \eta_{B,L}(f)\right)\Big]_{T}(\alpha)=\mathrm{Hom}_{\mathrm{DgMod}(\mathcal{R})}\Big(M_{T},\eta_{B,L}(f)\Big)(\alpha)=\eta_{B,L}(f)\circ \alpha.
\]
On the other hand,
\begin{align*}
\Big(\!G_{N_T,L}\!\circ\! [\mathbb{G}_{2}(f)]_{T}\!\circ \!F_{M_{T},B}\!\Big)(\alpha)&= (G_{N_{T},L}\!\circ\!\mathrm{Hom}_{\mathrm{DgMod}(\mathcal{S})}(F(M_{T}),f))( F_{M_{T},B}(\alpha))\\
&= (G_{N_{T},L}\circ\mathrm{Hom}_{\mathrm{DgMod}(\mathcal{S})}(F(M_{T}),f))(F(\alpha)) \\
&= G_{N_{T},L}(f\circ F(\alpha)) \\
&= G( f\circ F(\alpha)).
\end{align*}
This proves $\mathbb{G}_1(\eta_{B,L}(f))=\rho_{L}\mathbb{G}_2(f)\xi_B,\,\,\forall f\in \mathrm{Hom}_{\mathrm{DgMod}(\mathcal {S})}(F(B),L)$, and hence $(\mathbb{G}_{1},\mathbb{G}_{2})$ is compatible with $ (F,G) $.
\end{proof}

\begin{lemma}\label{dgcambiofun}
Let $F$ be a dg-fully faithful functor. If $(F,G)$ is a dg-adjoint pair, then there exists a dg-functor $G'$ such that $(F,G')$ is a dg-adjoint pair, where $G\simeq G'$ and the unit $\epsilon':1\longrightarrow G'F$ is the identity.
\end{lemma}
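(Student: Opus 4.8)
The plan is to follow the classical argument: dg-full faithfulness makes the unit of $(F,G)$ invertible, and we correct $G$ by conjugating with the inverse of that unit over the strict image of $F$. Write $F\colon\mathcal A\to\mathcal B$, $G\colon\mathcal B\to\mathcal A$, and let $\varepsilon\colon 1_{\mathcal A}\to GF$ be the unit of the dg-adjoint pair $(F,G)$. By the Remark following Definition~\ref{dgadjuncion}, $\varepsilon$ is a closed, degree-zero dg-natural transformation, and by Proposition~\ref{adjointfull}(b) (equivalently Proposition~\ref{dgfulliso}) the hypothesis that $F$ is dg-fully faithful forces $\varepsilon$ to be a natural isomorphism; hence each $\varepsilon_A\colon A\to GF(A)$ is invertible with $\varepsilon_A^{-1}$ again closed and of degree zero.

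Next I would build $G'$ explicitly. On objects put $G'(B):=A$ when $B=F(A)$ and $G'(B):=G(B)$ otherwise, and define $\theta_B\colon G(B)\to G'(B)$ to be $\varepsilon_A^{-1}$ when $B=F(A)$ and $1_{G(B)}$ otherwise; in both cases $\theta_B$ is a closed degree-zero isomorphism. On a homogeneous morphism $g\colon B_1\to B_2$ set $G'(g):=\theta_{B_2}\circ G(g)\circ\theta_{B_1}^{-1}$, extended $K$-linearly. Using only that $G$ is a dg-functor and that the $\theta_B$ are closed of degree zero, one checks that $G'$ preserves identities and composition, is graded, and (by the Leibniz rule, since $d(\theta_B)=0$) commutes with the differentials; hence $G'$ is a dg-functor. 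By construction the family $\theta=\{\theta_B\}_{B\in\mathcal B}$ is a dg-natural isomorphism, so $G\simeq G'$.

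It then remains to transport the adjunction. Let $\Phi_{A,B}\colon\mathcal B(F(A),B)\xrightarrow{\sim}\mathcal A(A,G(B))$ be the adjugant of $(F,G)$ in $\mathrm{DgMod}_{str}(K)$, natural in $A$ and $B$, and set $\Phi'_{A,B}:=\mathcal A(A,\theta_B)\circ\Phi_{A,B}\colon\mathcal B(F(A),B)\to\mathcal A(A,G'(B))$, i.e.\ post-compose with $\theta_B$. Since $\theta_B$ is a closed degree-zero isomorphism this is again an isomorphism in $\mathrm{DgMod}_{str}(K)$; naturality of $\Phi'$ in $B$ is precisely the naturality of $\theta$ (which is how $G'$ was defined on morphisms) combined with that of $\Phi$, while naturality in $A$ is inherited from $\Phi$. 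Thus $\Phi'$ exhibits $(F,G')$ as a dg-adjoint pair, and its unit at $A$ is $\varepsilon'_A=\Phi'_{A,F(A)}(1_{F(A)})=\theta_{F(A)}\circ\Phi_{A,F(A)}(1_{F(A)})=\varepsilon_A^{-1}\circ\varepsilon_A=1_A$, so $\varepsilon'$ is the identity natural transformation, as claimed.

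The only step that is not purely formal is the well-definedness of $G'$ on objects: declaring $G'(F(A)):=A$ is legitimate exactly when $F$ is injective on objects, which is the case in all the situations where this lemma is applied; in general one fixes once and for all a preimage $A_B$ for each object $B$ in the strict image of $F$ and reads ``$B=F(A)$'' as ``$B=F(A_B)$''. Everything else is routine bookkeeping carried through the symmetric monoidal closed structure of $\mathrm{DgMod}_{str}(K)$, so I expect the write-up to be short.
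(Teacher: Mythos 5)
Your write-up is exactly the standard adaptation that the paper invokes by citing Hilton--Stammbach (the paper's own ``proof'' is only that citation): invert the unit via Proposition \ref{adjointfull}(b), conjugate $G$ by the inverse unit over the image of $F$, and transport the adjugant, so the approach matches the intended one and the computations are correct. The object-level caveat you flag is genuine --- for the new unit to be \emph{literally} the identity one needs the chosen preimage of $F(A)$ to be $A$ itself, i.e.\ $F$ injective on objects --- but that imprecision is already present in the lemma's statement and in the cited source, not introduced by your argument.
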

\begin{proof}
The proof given in \cite[Proposition 7.6]{Hilton} in p. 67, can be adapted to this setting.
\end{proof}

\begin{theorem}\label{Recoll1}
Let $\mathcal{R}$, $\mathcal{S}$, $\mathcal{C}$ and $\mathcal{T}$ be dg-categories and $M\in\mathrm{DgMod}(\mathcal{R}\otimes \mathcal{T}^{op})$.
\begin{enumerate}
\item [(a)]
If the diagram 
$$\xymatrix{\mathrm{DgMod}(\mathcal{C})\ar@<-2ex>[rr]_{i_{\ast}} & & \mathrm{DgMod}(\mathcal{S})\ar@<-2ex>[rr]_{j^{!}}\ar@<-2ex>[ll]_{i^{\ast}} & &  \mathrm{DgMod}(\mathcal{R})\ar@<-2ex>[ll]_{j_{!}}}$$
is a left dg-recollement, then there is a left dg-recollement
$$\xymatrix{\mathrm{DgMod}(\mathcal{C})\ar@<-2ex>[rr]_{\tilde{i_{\ast}}} & & \mathrm{DgMod}(\mathbf{\Lambda}^{!})\ar@<-2ex>[rr]_{\tilde{j^{!}}}\ar@<-2ex>[ll]_{\tilde{i^{\ast}}} & &  \mathrm{DgMod}(\mathbf{\Lambda})\ar@<-2ex>[ll]_{\tilde{j_{!}}}}$$
 where $\mathbf{\Lambda}:=\left(\begin{smallmatrix}
\mathcal{T} &0\\
M& \mathcal{R}
\end{smallmatrix}\right )$ and $\mathbf{\Lambda}^!:=\left (\begin{smallmatrix}
\mathcal{T} &0\\
j_{!}(M)& \mathcal{S}
\end{smallmatrix}\right )$ are  dg-matrix categories and  $j_{!}(M)$  is the dg-bimodule constructed as in Definition \ref{otrobimodulo}.

\item [(b)] If the diagram 
$$\xymatrix{\mathrm{DgMod}(\mathcal{C})\ar@<2ex>[rr]^{i_{!}} & & \mathrm{DgMod}(\mathcal{S})\ar@<2ex>[rr]^{j^{\ast}}\ar@<2ex>[ll]^{i^{!}} & &  \mathrm{DgMod}(\mathcal{R})\ar@<2ex>[ll]^{j_{\ast}}}$$
is a right recollement, then there is a right recollement
$$\xymatrix{\mathrm{DgMod}(\mathcal{C})\ar@<2ex>[rr]^{\tilde{i_{!}}} & & \mathrm{DgMod}(\mathbf{\Lambda}^{\ast})\ar@<2ex>[rr]^{\tilde{j^{\ast}}}\ar@<2ex>[ll]^{\tilde{i^{!}}} & & \mathrm{DgMod}(\mathbf{\Lambda})\ar@<2ex>[ll]^{\tilde{j_{\ast}}}}$$
where $\mathbf{\Lambda}:=\left(\begin{smallmatrix}
\mathcal{T} &0\\
M& \mathcal{R}
\end{smallmatrix}\right )$ and $\mathbf{\Lambda}^*:=\left (\begin{smallmatrix}
\mathcal{T} &0\\
j_{*}(M)& \mathcal{S}
\end{smallmatrix}\right )$ are dg-matrix categories and $j_{\ast}(M)$ is the dg-bimodule constructed as in Definition \ref{otrobimodulo}.
\end{enumerate}
\end{theorem}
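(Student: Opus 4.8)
The plan is to deduce both parts from Theorem \ref{DgRecollJ}: one realizes each $\mathrm{DgMod}$ of a triangular matrix category as a dg-comma category via Theorem \ref{equivalenceLEOS}, and one produces the dg-compatibility hypothesis required by Theorem \ref{DgRecollJ} from Lemma \ref{RecollG} (for part (a)) and its dual (for part (b)).

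\textbf{Part (a).} By (LR3) the dg-functor $j_!\colon\mathrm{DgMod}(\mathcal{R})\to\mathrm{DgMod}(\mathcal{S})$ is dg-fully faithful, so Lemma \ref{dgcambiofun} allows me to replace $j^!$ by an isomorphic dg-functor, still denoted $j^!$, for which the unit $1_{\mathrm{DgMod}(\mathcal{R})}\to j^!j_!$ is the identity; since the new functor is isomorphic to the old one the diagram remains a left dg-recollement ((LR1) is clear, $j^!i_\ast\simeq 0$ still forces $j^!i_\ast=0$ for (LR2), and $i_\ast,j_!$ are untouched for (LR3)). In particular $\varepsilon_{M_T}=1_{M_T}$ for every $T\in\mathcal{T}$, so Lemma \ref{RecollG} with $F:=j_!$ and $G:=j^!$ gives that the pair $(\mathbb{G}_1,\mathbb{G}_2)$ is dg-compatible with $(j_!,j^!)$, where $\mathbb{G}_1\colon\mathrm{DgMod}(\mathcal{R})\to\mathrm{DgMod}(\mathcal{T})$ is attached to $M$ as in Remark \ref{defofG1} and $\mathbb{G}_2\colon\mathrm{DgMod}(\mathcal{S})\to\mathrm{DgMod}(\mathcal{T})$ is attached to the dg-bimodule $j_!(M)$ of Definition \ref{otrobimodulo} as in Remark \ref{defofG2}. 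Now Theorem \ref{DgRecollJ}(a), applied with $\mathcal{A}=\mathrm{DgMod}(\mathcal{S})$, $\mathcal{B}=\mathrm{DgMod}(\mathcal{R})$, $\mathcal{D}=\mathrm{DgMod}(\mathcal{T})$, $G_1=\mathbb{G}_2$ and $G_2=\mathbb{G}_1$ (so that $(G_2,G_1)=(\mathbb{G}_1,\mathbb{G}_2)$ is the dg-compatible pair just obtained), produces a left dg-recollement connecting $\mathrm{DgMod}(\mathcal{C})$, $\big(\mathrm{DgMod}(\mathcal{T}),\mathbb{G}_2\,\mathrm{DgMod}(\mathcal{S})\big)$ and $\big(\mathrm{DgMod}(\mathcal{T}),\mathbb{G}_1\,\mathrm{DgMod}(\mathcal{R})\big)$. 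Finally Theorem \ref{equivalenceLEOS} supplies equivalences of dg-categories $\big(\mathrm{DgMod}(\mathcal{T}),\mathbb{G}_2\,\mathrm{DgMod}(\mathcal{S})\big)\simeq\mathrm{DgMod}(\mathbf{\Lambda}^!)$ and $\big(\mathrm{DgMod}(\mathcal{T}),\mathbb{G}_1\,\mathrm{DgMod}(\mathcal{R})\big)\simeq\mathrm{DgMod}(\mathbf{\Lambda})$, and conjugating the recollement above through these equivalences yields the asserted left dg-recollement, with $\widetilde{i_\ast},\widetilde{i^\ast},\widetilde{j^!},\widetilde{j_!}$ the transported functors.

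\textbf{Part (b).} This runs dually. Here (RR3) gives that $j_\ast\colon\mathrm{DgMod}(\mathcal{R})\to\mathrm{DgMod}(\mathcal{S})$ is dg-fully faithful; I set $F:=j_\ast$, form the dg-bimodule $j_\ast(M)$ and the associated $\mathbb{G}_2$, and use the dual of Lemma \ref{dgcambiofun} to replace $j^\ast$ by an isomorphic dg-functor whose counit $j^\ast j_\ast\to 1$ is the identity, so that the diagram is still a right dg-recollement and $M_T=j^\ast\big(j_\ast(M_T)\big)$ for all $T$. Then the dual of Lemma \ref{RecollG} — obtained by transcribing the proof of Lemma \ref{RecollG} with the roles of $\mathbb{G}_1,\mathbb{G}_2$ (and of unit and counit) interchanged, the two closed degree-zero dg-natural transformations coming from Lemma \ref{RecollH}(b),(d) and the monomorphism claim from the faithfulness of $j_\ast$ — shows that $(\mathbb{G}_2,\mathbb{G}_1)$ is dg-compatible with $(j^\ast,j_\ast)$. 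Theorem \ref{DgRecollJ}(b), with $G_1=\mathbb{G}_2$ and $G_2=\mathbb{G}_1$, then produces a right dg-recollement of dg-comma categories, and Theorem \ref{equivalenceLEOS} (now identifying $\big(\mathrm{DgMod}(\mathcal{T}),\mathbb{G}_2\,\mathrm{DgMod}(\mathcal{S})\big)\simeq\mathrm{DgMod}(\mathbf{\Lambda}^\ast)$) transports it to the required right dg-recollement.

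\textbf{Main obstacle.} The only genuine difficulty is bookkeeping: one must keep straight which of the two adjoints is dg-fully faithful, the variance of $\mathbb{G}_1$ and $\mathbb{G}_2$, and the precise matching of the category-and-functor labels among Theorem \ref{equivalenceLEOS}, Lemma \ref{RecollG} and Theorem \ref{DgRecollJ}; and one must verify that normalizing the adjunction via Lemma \ref{dgcambiofun} (and its dual) preserves every defining condition of a left (resp. right) dg-recollement, so that Theorem \ref{DgRecollJ} applies without change. The only piece of mathematics not already in hand is the dual of Lemma \ref{RecollG}, whose proof is routine.
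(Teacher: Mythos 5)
Your proposal is correct and follows essentially the same route as the paper: normalize the unit of $(j_!,j^!)$ via Lemma \ref{dgcambiofun}, invoke Lemma \ref{RecollG} to get dg-compatibility of $(\mathbb{G}_1,\mathbb{G}_2)$, apply Theorem \ref{DgRecollJ}, and transport through the equivalences of Theorem \ref{equivalenceLEOS}. The only difference is that you spell out part (b) and the preservation of the recollement axioms under the normalization, which the paper leaves implicit with ``(b) is similar.''
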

\begin{proof}
We only prove (a), since (b) is similar. Consider $$\xymatrix{\mathrm{DgMod}(\mathcal{C})\ar@<-2ex>[rr]_{i_{\ast}} & & \mathrm{DgMod}(\mathcal{S})\ar@<-2ex>[rr]_{j^{!}}\ar@<-2ex>[ll]_{i^{\ast}} & &  \mathrm{DgMod}(\mathcal{R})\ar@<-2ex>[ll]_{j_{!}}}$$ a left dg-recollement and $M\in \mathrm{DgMod}(\mathcal{R}\otimes \mathcal{T}^{op})$. Set $N=j_{!}(M)$ as in Definition \ref{otrobimodulo}, and consider the dg-functors $\mathbb{G}_{1}:\mathrm{DgMod}(\mathcal{R})\rightarrow \mathrm{DgMod}(\mathcal{T})$ and
$\mathbb{G}_{2}:\mathrm{DgMod}(\mathcal{S})\rightarrow \mathrm{DgMod}(\mathcal{T})$ as defined in Remarks \ref{defofG1} and \ref{defofG2}. Since $j_{!}$ is dg-fully faithful,
 by Lemma \ref{dgcambiofun}, we may assume that the unit
$\epsilon:1\rightarrow j^{!}j_{!}$, of the  dg-adjoint pair $(j_{!},j^{!})$, is the identity. In particular, we have that $\epsilon_{M_T}=1_{M_T}$. Thus, from Lemma \ref{RecollG}, the pair $(\mathbb{G}_1,\mathbb{G}_2)$ is 
compatible with $(j_{!},j^{!})$  and the rest follows from Theorems \ref{DgRecollJ} and \ref{equivalenceLEOS}.
\end{proof}


\footnotesize

\vskip3mm \noindent Martha Lizbeth Shaid Sandoval Miranda:\\ Departamento de Matem\'aticas, Universidad Aut\'onoma Metropolitana Unidad Iztapalapa\\
Av. San Rafael Atlixco 186, Col. Vicentina Iztapalapa 09340, M\'exico, Ciudad de M\'exico.\\ {\tt marlisha@xanum.uam.mx, marlisha@ciencias.unam.mx}

\vskip3mm \noindent Valente Santiago Vargas:\\ Departamento de Matem\'aticas, Facultad de Ciencias, Universidad Nacional Aut\'onoma de M\'exico\\
Circuito Exterior, Ciudad Universitaria,
C.P. 04510, Ciudad de M\'exico, MEXICO.\\ {\tt valente.santiago@ciencias.unam.mx}

\vskip3mm \noindent  Edgar Omar Velasco P\'aez:\\ Departamento de Matem\'aticas, Facultad de Ciencias, Universidad Nacional Aut\'onoma de M\'exico\\
Circuito Exterior, Ciudad Universitaria,
C.P. 04510, Ciudad de M\'exico, MEXICO.\\ {\tt edgar-bkz13@ciencias.unam.mx }

\end{document}